\newcommand{\A}{\mathbf{A}}
\renewcommand{\P}{\mathbf{P}}
\newcommand{\sC}{\mathcal{C}}
\newcommand{\sD}{\mathcal{D}}
\newcommand{\sV}{\mathcal{V}}
\newcommand{\ul}[1]{{\underline{#1}}}
\newcommand{\Sm}{\operatorname{\mathbf{Sm}}}
\newcommand{\Sch}{\operatorname{\mathbf{Sch}}}
\newcommand{\by}{\xrightarrow}
\newcommand{\iso}{\by{\sim}}
\newcommand{\pro}[1]{\text{\rm pro}_{#1}\text{\rm--}}
\newcommand{\fin}{{\operatorname{fin}}}
\renewcommand{\o}{{\operatorname{o}}}
\newcommand{\red}{{\operatorname{red}}}
\newcommand{\et}{{\operatorname{\acute{e}t}}}
\newcommand{\tto}{\dashrightarrow}
\newcommand{\inj}{\hookrightarrow}
\newcommand{\id}{{\operatorname{Id}}}
\renewcommand{\lim}{\operatornamewithlimits{\varprojlim}}
\newcommand{\ol}{\overline}
\renewcommand{\phi}{\varphi}
\renewcommand{\epsilon}{\varepsilon}
\newcommand{\MNST}{\operatorname{\mathbf{MNST}}}
\newcommand{\MSm}{\operatorname{\mathbf{MSm}}}
\newcommand{\Bl}{{\mathbf{Bl}}}
\newcommand{\Sq}{{\operatorname{\mathbf{Sq}}}}
\newcommand{\OD}{\mathrm{OD}}
\newcommand{\ctimes}{\times^\mathrm{c}}
\newcommand{\ulMSm}{\operatorname{\mathbf{\underline{M}Sm}}}
\newcommand{\ulMNST}{\operatorname{\mathbf{\underline{M}NST}}}
\newcommand{\ulomega}{\underline{\omega}}
\newcommand{\Comp}{\operatorname{\mathbf{Comp}}}
\newcommand{\MV}{\operatorname{MV}}
\newcommand{\ulMV}{\operatorname{\underline{MV}}}
\newcommand{\ulMVfin}{\operatorname{\underline{MV}^{\mathrm{fin}}}}
\newcounter{spec}
\newenvironment{thlist}{\begin{list}{\rm{(\roman{spec})}}%
{\usecounter{spec}\labelwidth=20pt\itemindent=0pt\labelsep=10pt}}%
{\end{list}}%
\newtheorem{Th}{Theorem}
\newtheorem{lemma}{Lemma}[subsection]
\newtheorem{thm}[lemma]{Theorem}
\newtheorem{prop}[lemma]{Proposition}
\newtheorem{cor}[lemma]{Corollary}
\theoremstyle{definition}
\newtheorem{defn}[lemma]{Definition}
\newtheorem{definition}[lemma]{Definition}
\newtheorem{condition}[lemma]{Conditions}
\theoremstyle{remark}
\newtheorem{Rk}{Remark}
\newtheorem{remark}[lemma]{Remark}
\newtheorem{remarks}[lemma]{Remarks}
\newtheorem{ex}[lemma]{Example}
\newtheorem{claim}[lemma]{Claim}
\numberwithin{equation}{subsection}
\def\Comp{\Comp^{\fin}}
\def\MSm{\operatorname{\mathbf{MSm}}}
\def\ulMSm{\operatorname{\mathbf{\ul{M}Sm}}}
\def\ulMNST{\operatorname{\mathbf{\ul{M}NST}}}
\def\MNST{\operatorname{\mathbf{MNST}}}
\def\Comp{\operatorname{\mathbf{Comp}}}
\begin{document}

\title[Topologies on schemes and modulus pairs]
{Topologies on schemes and modulus pairs}
\author[B. Kahn]{Bruno Kahn}
\address{IMJ-PRG\\Case 247\\
4 place Jussieu\\
75252 Paris Cedex 05\\
France}
\email{bruno.kahn@imj-prg.fr}
\author[H. Miyazaki]{Hiroyasu Miyazaki}
\address{RIKEN iTHEMS, Wako, Saitama 351-0198, Japan}
\email{hiroyasu.miyazaki@riken.jp}
\date{May 13, 2020} 
\thanks{The first author acknowledges the support of Agence Nationale de la Recherche (ANR) under reference ANR-12-BL01-0005. 
The work of the second author is supported by Fondation Sciences Math\'ematiques de Paris (FSMP), by RIKEN Special Postdoctoral Researchers (SPDR) Program, by RIKEN Interdisciplinary Theoretical and Mathematical Sciences Program (iTHEMS), and by JSPS KAKENHI Grant (19K23413).
}

\begin{abstract}
We study relationships between the Nisnevich top\-ol\-ogy on smooth schemes and  certain Grothendieck topologies on proper and not necessarily proper modulus pairs which were introduced respectively in \cite{nistopmod} and \cite{modsheaf1}. Our results play an important role in the theory of sheaves with transfers on proper modulus pairs. 
\end{abstract}

\subjclass[2010]{19E15 (14F42, 19D45, 19F15)}

\maketitle

\tableofcontents

\section*{Introduction}

\setcounter{subsection}{1}

In \cite{modsheaf1}, a theory of sheaves on \emph{non-proper} modulus pairs has been studied as a first step to establish the theory of motives with modulus, which is to be a non-$\A^1$-invariant version of Voevodsky's category of motives given in \cite{voetri}. 
This repaired the first part of the mistake in \cite{motmod} (the ancestor of the theory) mentioned in the introduction of \cite{modsheaf1}.

In \cite{modsheaf2}, a theory of sheaves on \emph{proper} modulus pairs is developed as a second step,  thus repairing the second part of the mistake. The main point of these repairs is to prove that the categories $\ulMNST$ and $\MNST$ of  \cite{motmod}, which had been defined in an ad hoc way, are really categories of sheaves (with transfers) for suitable Grothendieck topologies having good formal properties.

The aim of the present paper is to provide some foundational results which will be the key building blocks of the theory in  \cite{modsheaf2}. To explain our aim in more detail, we first recall basic notions of modulus pairs from \cite{modsheaf1}.
We fix a base field $k$ and write $\Sch$ (resp. $\Sm$) for the category of separated $k$-schemes of finite type  (resp. its full subcategory of smooth $k$-schemes).

A \emph{modulus pair} is a pair \[M=(\ol{M},M^\infty),\] where $\ol{M} \in \Sch$ and $M^\infty$ is an effective Cartier divisor on $\ol{M}$ such that the complement of the divisor 
\[M^\o := \ol{M} - M^\infty \]
belongs to $\Sm$.
These conditions imply that $\ol{M}$ is reduced and $M^\o$ is dense \cite[Rem. 1.1.2 (3)]{modsheaf1}. 
We call $\ol{M}$ (resp. $M^\o$) \emph{the ambient space of $M$} (resp. \emph{the interior of $M$}). 

A morphism $f : M \to N$ of modulus pairs is a morphism $f^\o : M^\o \to N^\o$ in $\Sm$ which satisfies the following \emph{admissibility condition}: let $\Gamma$ be the graph of the rational map $\ol{M} \dashrightarrow \ol{N}$ defined by $f^\o$, and let $\Gamma^N \to \Gamma$ be the normalization, whence a diagram $\ol{M} \xleftarrow{a} \Gamma^N \xrightarrow{b} \ol{N}$.
Then $a$ is proper and we have $a^\ast M^\infty \geq b^\ast N^\infty$, where $a^\ast M^\infty $ and $b^\ast N^\infty$ denote the pull-backs of effective Cartier divisors (see \cite[Def. 1.1.1, Def. 1.3.2, Def. 1.3.3]{modsheaf1}).
The composition of morphisms of modulus pairs is given by that of morphisms in $\Sm$.
Thus, we obtain a category $\ulMSm$ of modulus pairs. 

A modulus pair $M$ is \emph{proper} if $\ol{M}$ is proper over $k$. We write $\MSm$ for the full subcategory of $\ulMSm$ which consists of proper modulus pairs. It is our main object of study here.

Recall that the Nisnevich topology on $\Sm$ may be understood by means of a certain \emph{cd-structure} in the sense of Voevodsky \cite{cdstructures}, which is \emph{complete} and \emph{regular} (see loc. cit.).  In \cite{modsheaf1}, it is shown that $\ulMSm$ also admits a complete and regular cd-structure, parallel to the previous one and denoted by $P_{\ulMV}$.
 In \cite{nistopmod}, a more subtle cd-structure $P_{\MV}$ is defined on the category $\MSm$, and shown to be complete and regular as well. We recall in Section \ref{section:cd-structures} the definitions of all these cd-structures. 
 
This paper studies the relationship between the cd-structures $P_{\ulMV}$ and $P_{\MV}$. Our main theorems are too technical to be stated in this introduction;  here they are nevertheless:
\begin{enumerate}
\item Theorem \ref{main:cofinality-MV} (cofinality theorem);
\item Theorem \ref{ex-pc} (existence of partial compactifications).
\end{enumerate}

Let us roughly explain the contents of Theorem \ref{main:cofinality-MV}. Given a complete and regular cd-structure, distinguished squares yield long exact ``Mayer-Vietoris'' sequences for sheaves in the associated topology. Take a distinguished square $S$  in $P_{\ulMV}$. By Proposition \ref{p1}, it may be embedded into a commutative square $T$ in $\MSm$ by a collection of ``compactifications'' (see Definition \ref{d1}). But $T$ has no reason to be in $P_{\MV}$. Theorem \ref{main:cofinality-MV} says that, under a mild normality condition on $S$, one can always lift the embedding $S\inj T$ to an embedding $S\to T'$ with $T'\in P_{\MV}$.

Endow $\ulMSm$ and $\MSm$ with the Grothendieck topologies associated to these cd-structures, and $\Sm$ with the Nisnevich topology. 
Then the following result is a corollary of Theorems \ref{main:cofinality-MV} and \ref{ex-pc}. 
\begin{Th}\label{thm:cocontinuity}
The natural forgetful functors
\begin{align*}
\ul{\omega}_s :& \ulMSm \to \Sm ; \ \ M \mapsto M^\o , \\
\omega_s :& \MSm \to \Sm ; \ \ M \mapsto M^\o 
\intertext{and the left adjoint to $\ulomega_s$}
{\lambda_s:}&{ \Sm\to \ulMSm; \ \ X\mapsto (X,\emptyset)}
\intertext{are continuous and cocontinuous in the sense of \cite[Exp. III]{SGA4}. 
Moreover, the inclusion functor }
\tau_s& : \MSm \to \ulMSm; \ \ M \mapsto M 
\end{align*}

is continuous. 
\end{Th}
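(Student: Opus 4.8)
The plan is to reduce every assertion to the \emph{generating} coverings of the three cd-topologies and then to feed the two main theorems into the resulting combinatorial conditions. All three cd-structures being complete and regular (Section \ref{section:cd-structures}), the associated topologies are generated by the families $\{Y\to X,\ A\to X\}$ attached to distinguished squares, together with the empty covering of the initial object; by the criteria of \cite[Exp. III]{SGA4} it therefore suffices to check continuity on these generating families (with the usual compatibility with the Cartesian fibre products occurring in them), and to check cocontinuity by lifting, for each object, a distinguished square over its image to a covering of the object itself.

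Continuity of the four functors I expect to be essentially formal. By the very construction of $P_{\ulMV}$ and $P_{\MV}$, the interior functor $M\mapsto M^\o$ carries a distinguished square of either cd-structure to a Nisnevich distinguished square in $\Sm$ and commutes with the Cartesian fibre products inside it; hence $\ulomega_s$ and $\omega_s$ send generating coverings to generating coverings and are continuous. Dually, a Nisnevich distinguished square over $X$ is sent by $\lambda_s:X\mapsto(X,\emptyset)$ to a $P_{\ulMV}$-distinguished square with empty modulus, and $\lambda_s$ preserves the fibre product $B=A\times_X Y$, so $\lambda_s$ is continuous. Finally, a $P_{\MV}$-distinguished square, regarded in $\ulMSm$ through $\tau_s$ (which commutes with taking interiors), generates a $P_{\ulMV}$-covering sieve of its target; this compatibility of the two cd-structures is part of the comparison set up in Section \ref{section:cd-structures} and Theorem \ref{main:cofinality-MV}, whence $\tau_s$ is continuous.

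For cocontinuity of $\ulomega_s$ and $\lambda_s$ the point is a direct lift. Given $M\in\ulMSm$ and a Nisnevich distinguished square over $M^\o$, I would keep the \'etale (resp. open) maps over the ambient space $\ol M$ and pull back $M^\infty$, producing a $P_{\ulMV}$-distinguished square over $M$ whose interior is the given Nisnevich square (this is the construction underlying $P_{\ulMV}$ in \cite{modsheaf1}); its image under $\ulomega_s$ then equals, hence refines, the given covering. The same construction over $\lambda_s(X)=(X,\emptyset)$ yields cocontinuity of $\lambda_s$; alternatively this is formal from the adjunction $\lambda_s\dashv\ulomega_s$ and the continuity already obtained.

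The substance of the statement, and the place where both main theorems enter, is the cocontinuity of $\omega_s$. Given $M\in\MSm$ and a Nisnevich distinguished square over $M^\o$, I would first lift it as above to a distinguished square $S\in P_{\ulMV}$ over $M$; the mild normality hypothesis of Theorem \ref{main:cofinality-MV} can be arranged because the interiors are smooth (hence normal) and the lift is built from \'etale maps, while the proper ambient spaces needed for the lift to sit over the proper pair $M$ are supplied by Theorem \ref{ex-pc} (existence of partial compactifications). Theorem \ref{main:cofinality-MV} then embeds $S$, by a compactification that leaves interiors unchanged, into a square $T'\in P_{\MV}$ over $M$; since $\omega_s(T')$ is again the original Nisnevich square, the associated $P_{\MV}$-covering of $M$ refines the given one on interiors, proving cocontinuity. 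I expect this to be the only real obstacle: in contrast with the non-proper case one cannot lift a Nisnevich cover of the interior to proper modulus pairs by a na\"ive ambient construction, and the crux is precisely to produce compactifications (Theorem \ref{ex-pc}) that simultaneously force the lifted square to be genuinely $P_{\MV}$-distinguished (Theorem \ref{main:cofinality-MV}), while checking that the normality hypothesis is always met in this situation. Once this is secured, the continuity assertions and the two easy cocontinuity assertions follow with only formal work.
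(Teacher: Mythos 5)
Your continuity arguments are essentially the paper's (squares go to squares for $\ulomega_s$ and $\lambda_s$; for $\tau_s$ the refining $\ulMV$-square comes from Condition (2) of Definition \ref{def:new-MV} --- note this is built into the definition of $\MV$-squares, not a consequence of Theorem \ref{main:cofinality-MV}; and $\omega_s=\ulomega_s\tau_s$). The genuine gap is in the step you dismiss as ``a direct lift'': the cocontinuity of $\ulomega_s$. Given $M=(\ol{M},M^\infty)\in\ulMSm$ and an elementary Nisnevich square over $M^\o$ with open part $U$ and \'etale part $V\to M^\o$, your construction (``keep the \'etale maps over $\ol M$ and pull back $M^\infty$'') does \emph{not} produce an $\ulMVfin$-square over $M$. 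Any open $\ol{S}(10)\subset\ol{M}$ with $\ol{S}(10)\cap M^\o=U$ is automatically disjoint from the closure $\ol{Z}$ in $\ol{M}$ of $Z:=M^\o-U$, so $\ol{M}-\ol{S}(10)\supset\ol{Z}$, and in general $\ol{Z}$ meets $|M^\infty|$; the fourth axiom of elementary Nisnevich squares then requires the \'etale piece to have, over each such boundary point, a point with trivial residue extension. If you take the ambient \'etale piece to be $V$ itself (mapping to $\ol M$ through $M^\o$), it has no points over the boundary at all; and in general no \'etale extension of $V$ over $\ol{M}$ with the required property exists. Concretely, take $M=(\A^2,V(x))$, $Z=\{y^2=x\}\cap M^\o$, $U=M^\o-Z$, and $V$ the double cover $\{t^2=x\}$ of $M^\o$ with one of the two copies of $Z$ in it removed: a point $q$ of an \'etale $\ol{M}$-scheme lying over $(0,0)$ with $k(q)=k$, whose preimage of $M^\o$ maps to $V$ over $M^\o$, would produce a square root of $x$ in $\sO_{\A^2,(0,0)}^{h}[1/x]$, which does not exist. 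This is precisely why the paper routes \emph{even this case} through Corollary \ref{cor:pc-normal} (hence Theorem \ref{ex-pc}): it regards the Nisnevich square with empty divisors as a normal $\ulMVfin$-square over $(M^\o,\emptyset)$, partially compactifies it so that its $(11)$-vertex dominates some $M_c\in\Comp(M)$ minimally, pulls back along the open immersion $\ol{M}\subset\ol{M}_c$, and finally replaces the $(11)$-vertex by $M$ via an isomorphism in $\ulMSm$ --- legitimate only because $\ulMV$-squares are defined as an essential image, hence are stable under isomorphism of squares. Your cocontinuity of $\lambda_s$ survives, but only through your ``alternative'' argument (adjunction $\lambda_s\dashv\ulomega_s$ plus continuity of $\ulomega_s$, Proposition \ref{pA.2} (2)), which is exactly what the paper does.

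A second, related defect concerns the normality hypothesis when you invoke Theorem \ref{main:cofinality-MV} for $\omega_s$. That hypothesis is on the \emph{ambient} space $\ol{S}(11)$ of the $\ulMVfin$-square being compactified, not on its interior; smoothness of the interiors is beside the point, and under your na\"ive lift the relevant space would be $\ol{M}$, which is \emph{not} assumed normal for a modulus pair. In the paper, the square actually fed into Theorem \ref{main:cofinality-MV} is the partial compactification $S'$, whose normality is \emph{arranged} by the normalized blow-up of Corollary \ref{cor:pc-normal}; it is not automatic. Once these two points are repaired, your outline for $\omega_s$ (partial compactification by Theorem \ref{ex-pc}, then cofinality of $\Comp^{\MV}$ by Theorem \ref{main:cofinality-MV}, then checking that the resulting $\MV$-square restricts to the given Nisnevich square, here via Lemma \ref{lemma:cocontinuity-cd}) coincides with the paper's proof.
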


(See \S \ref{s:co} for a review of continuity and cocontinuity.)

\begin{Rk} 
On the other hand, $\tau_s$ is not cocontinuous; see Remark \ref{r5.1}. 
Rather, the content of  Theorem  \ref{main:cofinality-MV}  is, morally, that its pro-left adjoint $\tau_s^!$ is continuous for a natural topology on $\pro{}\MSm$ extending that of $\MSm$. However, developing this viewpoint would force us to get into unpleasant categorical and set-theoretic issues, and we prefer to skip it here. 
\end{Rk}

This paper is organised as follows. 
In Section \ref{section:cd-structures}, we recall the definitions of the cd-structure on $\ulMSm$ from  \cite{modsheaf1}, and that on $\MSm$ from \cite{nistopmod}.
Moreover, we state Theorem \ref{main:cofinality-MV}.
In Section \ref{section:pc}, we state and prove Theorem \ref{ex-pc}.
In Section \ref{s3}, we prove Theorem \ref{main:cofinality-MV} in a special case.
In Section \ref{section:cofinality-MV}, we complete the proof of Theorem \ref{main:cofinality-MV}.
In Section \ref{section:cocontinuity}, we prove Theorem \ref{thm:cocontinuity}. The appendices provide technical facts needed in the text.

\subsection*{Acknowledgements} 

The idea of this paper originates in \cite[4.3]{motmod}\footnote{This paper is withdrawn. See \url{https://arxiv.org/pdf/1511.07124v5.pdf}.}.
Many techniques were invented when the second author was staying at IMJ-PRG at Jussieu as a post-doctoral researcher in 2018. 
The second author appreciates the FSMP program which offered him the nice opportunity to work there.
This produced the preprint \cite{KaMi}, which is the direct ancestor to the present paper.

We made many important observations during the first author's stay at RIKEN iTHEMS in September 2019, which enabled us to considerably simplify the arguments of \cite{KaMi} using the off-diagonal functor introduced in \cite{nistopmod}.
We are very grateful for iTHEMS' hospitality and excellent working conditions. 

We thank Shuji Saito for encouraging us to finish this revision by explaining that the techniques of \cite{KaMi} should be a key ingredient to the theory of modulus sheaves with transfers on proper modulus pairs \cite{modsheaf2}.
We also thank Takao Yamazaki for noticing that our results had to do with (co)continuity, which gave us a great insight on what we had been doing in \cite{KaMi}.

\section{Recollection on cd-structures; the cofinality theorem}\label{section:cd-structures}

In this section, we recall definitions of the cd-structures on $\ulMSm$ and $\MSm$ from \cite{modsheaf1} and \cite{nistopmod}. We assume the reader familiar with \cite{cdstructures}, part of whose results is summarised in \cite[A.8]{modsheaf1}.

\subsection{The cd-structure on $\protect\Sm$}
First, recall the Nisnevich cd-structure on $\Sm$.
The following notation is useful.

\begin{defn}
Let $\Sq$ denote the product category $[1]^2 = \{0 \to 1\}^2$. 
For any category $\sC$, define $\sC^\Sq$ as the category of functors $\Sq \to \sC$.
An object of $\sC^\Sq$ is a commutative square in $\sC$, and a morphism of $\sC^\Sq$ is a morphism of commutative squares. 

An object $S \in \sC^\Sq$ will often be depicted as
\begin{equation}\label{eq:S}\begin{gathered}\xymatrix{
S(00) \ar[r]^{v_S} \ar[d]_{q_S} & S(01) \ar[d]^{p_S} \\
S(10) \ar[r]^{u_S} & S(11).
}\end{gathered}\end{equation}
\end{defn}

\begin{defn}
An \emph{elementary Nisnevich square} is an object of $\Sch^\Sq$ of the form 
\[\xymatrix{
W \ar[r] \ar[d] & V \ar[d] \\
U \ar[r] & X
}\]
which satisfies the following properties:
\begin{enumerate}
\item The square is cartesian.
\item The horizontal morphisms are open immersions.
\item The vertical morphisms are \'etale.
\item The morphism $(V - W)_\red \to (X - U)_\red$ is an isomorphism.
\end{enumerate}

Elementary Nisnevich squares whose vertices are in $\Sm$ define a complete and regular cd-structure on $\Sm$ \cite{unstableJPAA}. 
Moreover, the Grothendieck topology associated to the cd-structure is the Nisnevich topology.
\end{defn}

\subsection{The cd-structure on $\protect\ulMSm$}
Next, we recall the definition of $\ulMV$-squares from \cite{modsheaf1}.
We start from the following definition. 

\begin{defn} \ 
\begin{enumerate}
\item A morphism $f : M \to N$ is \emph{ambient} if $f^\o : M^\o \to N^\o$ extends (uniquely) to a morphism $\ol{f} : \ol{M} \to \ol{N}$. 
\item An ambient morphism $f : M \to N$ is \emph{minimal} if $M^\infty = \ol{f}^\ast N^\infty$.
\item Let $\ulMSm^\fin$ (resp. $\MSm^\fin$) be the (non-full) subcategory of $\ulMSm$ (resp. $\MSm$) whose objects are the same as $\ulMSm$ (resp. $\MSm$) and morphisms are ambient morphisms.
\end{enumerate}
\end{defn}

\begin{defn}\label{d1.1}
An $\ulMV^\fin$-square is an object $S \in (\ulMSm^\fin )^\Sq$ of the form \eqref{eq:S} such that 
\begin{enumerate}
\item All morphisms which appear in $S$ are minimal.
\item The square in $\Sch$
\begin{equation*}\begin{gathered}\xymatrix{
\ol{S}(00) \ar[r]^{\ol{v}_S} \ar[d]_{\ol{q}_S} & \ol{S}(01) \ar[d]^{\ol{p}_S} \\
\ol{S}(10) \ar[r]^{\ol{u}_S} & \ol{S}(11).
}\end{gathered}\end{equation*}
\end{enumerate}
is an elementary Nisnevich square.

By \cite[Prop. 3.2.2]{modsheaf1}, the $\ulMVfin$-squares form a complete and regular cd-structure on $\ulMSm^\fin$. 
\end{defn}

\begin{defn}
An $\ulMV$-square is an object $S \in \ulMSm^\Sq$ which belongs to the essential image of the natural (non-full) functor \[(\ulMSm^\fin )^\Sq \to \ulMSm^\Sq.\]

By \cite[Th. 4.1.2]{modsheaf1}, the $\ulMV$-squares form a complete and regular cd-structure on $\ulMSm$, denoted $P_{\ulMV}$. 
\end{defn}

\subsection{The cd-structure on $\protect\MSm$}
Finally, we recall the definition of $\MV$-squares from \cite[\S 4]{nistopmod}.
Recall that 
for any diagram $M_1 \to N \leftarrow M_2$ in $\ulMSm$ (resp. in $\MSm$) such that $M_1^\o \times_{N^\o} M_2^\o \in \Sm$, the fiber product $M_1 \times_N M_2$ is representable in $\ulMSm$ (resp. in $\MSm$)
(see \cite[\S 1.10]{modsheaf1} or \cite[\S 2.2]{nistopmod}) and coproducts (see \cite[Def. 3.1.2]{nistopmod}).

\begin{thm}[off-diagonals; see \protect{\cite[Th. 3.1.3]{nistopmod}}] 
For any morphism $f : M \to N$  in $\ulMSm$ such that $f^\o : M^\o \to N^\o$ is \'etale, there exists a canonical decomposition in $\ulMSm$
\[
M  \times_N M \cong M \sqcup \OD (f),
\]
where $\sqcup$ denotes the coproduct in $\ulMSm$, and the morphism $M \to M  \times_N M$ is the diagonal. 
If $M$ and $N$ are proper modulus pairs, so is $\OD (f)$.

Moreover, if $S$ is a commutative square in $\ulMSm$ of the form \eqref{eq:S} such that 
\begin{enumerate}
\item $u_S^\o$ and $v_S^\o$ are open immersions, and
\item $p_S^\o$ and $q_S^\o$ are \'etale,
\end{enumerate}
then the morphism $v_S \times v_S : S(00) \times_{S(10)} S(00) \to S(01) \times_{S(11)} S(01)$ induces a morphism 
\[
\OD (q_T) \to \OD (p_T)
\]
in $\ulMSm$.
\end{thm}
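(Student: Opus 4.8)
The plan is to deduce the induced morphism from the \emph{functoriality} of the off-diagonal decomposition established in the first part of the theorem, by restricting $v_S \times v_S$ to the off-diagonal summand of the source and factoring it through the off-diagonal summand of the target. First I would check that both fiber products exist in $\ulMSm$: since $q_S^\o$ and $p_S^\o$ are \'etale, the interiors $S(00)^\o \times_{S(10)^\o} S(00)^\o$ and $S(01)^\o \times_{S(11)^\o} S(01)^\o$ are smooth, so the cited representability result applies. The morphism $v_S \times v_S$ then exists by the universal property of the fiber product: the two projections of $S(00) \times_{S(10)} S(00)$ to $S(00)$, composed with $v_S$, yield two maps to $S(01)$ which agree after projection to $S(11)$ precisely because $S$ commutes ($u_S \circ q_S = p_S \circ v_S$ on interiors). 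Composing $v_S \times v_S$ with the coproduct inclusion $\OD(q_S) \inj S(00) \times_{S(10)} S(00)$ produces a morphism
\[ g : \OD(q_S) \to S(01) \times_{S(11)} S(01) \cong S(01) \sqcup \OD(p_S) \]
in $\ulMSm$, and the whole point is to show that $g$ factors through the summand $\OD(p_S)$.

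Next I would analyse $g$ on interiors. Over a point of $S(10)^\o$, the interior $S(00)^\o \times_{S(10)^\o} S(00)^\o$ consists of ordered pairs $(x_1, x_2)$ with common image, the diagonal summand being $\{x_1 = x_2\}$ and the off-diagonal summand $\OD(q_S)^\o$ being its complement $\{x_1 \neq x_2\}$; this is a genuine disjoint union because $q_S^\o$ is separated and unramified, so its diagonal is simultaneously an open and a closed immersion. Since $v_S^\o$ is an open immersion, hence injective, a pair with $x_1 \neq x_2$ is sent by $v_S^\o \times v_S^\o$ to a pair $(v_S^\o(x_1), v_S^\o(x_2))$ with $v_S^\o(x_1) \neq v_S^\o(x_2)$. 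Thus $g^\o$ carries $\OD(q_S)^\o$ into the off-diagonal summand $\OD(p_S)^\o$, and the underlying interior morphism of the desired map is simply the resulting $\OD(q_S)^\o \to \OD(p_S)^\o$.

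The hard part will be to lift this interior factorization to a factorization in $\ulMSm$, i.e. to show that $g$ itself (not merely $g^\o$) factors through the clopen inclusion $\OD(p_S) \inj S(01) \sqcup \OD(p_S)$. For this I would invoke the explicit description of the coproduct from \cite[Def. 3.1.2]{nistopmod}: the ambient space of $S(01) \sqcup \OD(p_S)$ is the disjoint union $\ol{S(01)} \sqcup \ol{\OD(p_S)}$ and its modulus is the corresponding disjoint union of divisors, so that $\ol{\OD(p_S)}$ sits inside as a clopen component. Because $\OD(q_S)^\o$ is dense in its ambient space and $g^\o$ lands in $\OD(p_S)^\o$, the closure of the graph of the rational map attached to $g$ is contained in $\ol{\OD(p_S)}$; consequently the admissibility inequality satisfied by $g$ against $(S(01) \sqcup \OD(p_S))^\infty$ reduces on this component to the admissibility inequality against $\OD(p_S)^\infty$, so the factored interior map is admissible and defines the required morphism $\OD(q_S) \to \OD(p_S)$ in $\ulMSm$. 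The same argument applied to the diagonal summand shows that $v_S \times v_S$ respects the entire decomposition, whence the construction is canonical; the only genuine subtlety is the disjointness of the coproduct, which is what makes the modulus-level factorization automatic once the interior one is known.
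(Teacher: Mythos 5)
The paper itself offers no proof of this statement: it is imported wholesale from \cite[Th.~3.1.3]{nistopmod}, so your proposal can only be measured against the statement and the definitions it relies on. For the ``Moreover'' clause your outline is essentially the right one, and its best part is the last step: since the coproduct in $\ulMSm$ has ambient space $\ol{S}(01) \sqcup \ol{\OD(p_S)}$ with the modulus divisor split accordingly, the closure of the graph of $g$ is contained in the clopen piece $\ol{\OD(q_S)} \times \ol{\OD(p_S)}$, so properness of the graph projection and the admissibility inequality for $g$ literally \emph{become} those for the factored map $\OD(q_S) \to \OD(p_S)$. That is the correct mechanism for promoting an interior-level factorization to a factorization in $\ulMSm$.

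There are, however, two genuine gaps. First, you prove only half of the theorem: the decomposition $M \times_N M \cong M \sqcup \OD(f)$ and the properness of $\OD(f)$ are ``established in the first part of the theorem'' only in the sense that you assume them. In the modulus setting this first part is the substantive one: the ambient space of $M \times_N M$ is \emph{not} $\ol{M} \times_{\ol{N}} \ol{M}$ (the fiber product in $\ulMSm$ is built from a compactification of $M^\o \times_{N^\o} M^\o$ together with a supremum of pulled-back divisors), so the assertion that the diagonal is a clopen immersion \emph{of modulus pairs}, with complement again a modulus pair and with the stated properness, does not follow formally from the scheme-level fact you cite (unramified plus separated implies clopen diagonal); it requires an actual construction, which is the content of the quoted theorem. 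Second, your description of the interiors is wrong over a non-algebraically closed field: a point of a fiber product of schemes is not determined by its pair of projections, so the off-diagonal is not $\{x_1 \neq x_2\}$. For instance $\Spec \mathbb{C} \times_{\Spec \mathbb{R}} \Spec \mathbb{C}$ has two points, both with equal projections, one of them lying in the off-diagonal. Hence the sentence ``a pair with $x_1 \neq x_2$ is sent to a pair with distinct images'' does not treat all points of $\OD(q_S)^\o$. The repair is standard and cheap: argue with $T$-points. Because $v_S^\o$ is an open immersion it is a monomorphism, so $v_S^\o a_1 = v_S^\o a_2$ forces $a_1 = a_2$; therefore $(v_S^\o \times v_S^\o)^{-1}\bigl(\Delta_{S(01)^\o}\bigr) = \Delta_{S(00)^\o}$ as subschemes, and consequently the clopen off-diagonal is carried into the clopen off-diagonal. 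With that substitution, and granting the first half as the quoted result it is, your argument for the induced morphism is sound.
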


\begin{defn}[see \protect{\cite[Def. 4.2.1]{nistopmod}}]\label{def:new-MV}
Let $T$ be an object of $\MSm^\Sq$ of the form 
\begin{equation}\label{eq:square-T}\begin{gathered}\xymatrix{
T(00) \ar[r]^{v_T} \ar[d]_{q_T} & T(01) \ar[d]^{p_T} \\
T(10) \ar[r]^{u_T} & T(11).
}\end{gathered}\end{equation}
Then $T$ is called an \emph{$\MV$-square} if the following conditions hold:
\begin{enumerate}
\item \label{thm:new-MV1} $T$ is a pull-back square in $\MSm$.
\item \label{thm:new-MV2} There exist an $\ulMV$-square $S$ such that $S(11) \in \MSm$ and a morphism $S \to T$ in $\ulMSm^\Sq$ such that the induced morphism $S^\o \to T^\o$ is an isomorphism in $\Sm^\Sq$ and $S(11) \to T(11)$ is an isomorphism in $\MSm$. 
In particular, $T^\o$ is an elementary Nisnevich square. 
\item \label{thm:new-MV3} $\OD(q_T) \to \OD (p_T)$ is an isomorphism in $\MSm$.
\end{enumerate}

The $\MV$-squares form a complete and regular cd-structure on $\MSm$, denoted by $P_{\MV}$ (see \cite[Th. 4.3.1, 4.4.1]{nistopmod}). 
\end{defn}

\subsection{A few lemmas} In this subsection, we collect some lemmas which were proven in previous works and will be used repeatedly in the sequel.

\begin{lemma}[\protect{\cite[Lem. 2.2]{KP}}]\label{lKL} 
Let $f:X\to Y$ be a surjective morphism of normal integral schemes, 
and let $D,D'$ be two Cartier divisors on $Y$. 
If $f^*D'\le f^*D$, then $D'\le D$.
\end{lemma}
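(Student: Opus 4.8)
The plan is to reduce the statement about Cartier divisors to a pointwise comparison of the local functions defining them, exploiting that $f$ is surjective and that both $X$ and $Y$ are normal integral. Since $X$ and $Y$ are integral, they have function fields, and the surjectivity of $f$ combined with dominance (automatic for a surjection of integral schemes) gives an inclusion of function fields $k(Y)\hookrightarrow k(X)$. The Cartier divisors $D,D'$ are determined by their local equations, and the hypothesis $f^*D'\le f^*D$ means that, locally, the ratio of the defining functions pulls back to a regular function on $X$.

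First I would localize at a height-one prime (codimension-one point) $y\in Y$. Because $Y$ is normal, the local ring $\sO_{Y,y}$ is a discrete valuation ring, so a Cartier divisor is recorded by the integer $v_y(D)$, where $v_y$ is the associated valuation; the inequality $D'\le D$ is equivalent to $v_y(D')\le v_y(D)$ for every such $y$. Using surjectivity, I would choose a point $x\in X$ lying over $y$; then $\sO_{X,x}$ is also a DVR (by normality of $X$, after further localizing at a height-one prime of $X$ above $y$, which exists since $f$ is dominant and $X$ is normal). The pullback $f^*D$ is measured by the valuation $v_x$, and the ramification relation $v_x(f^*D)=e\cdot v_y(D)$ holds with $e=e(x/y)\ge 1$ the ramification index. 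The hypothesis $v_x(f^*D')\le v_x(f^*D)$ then reads $e\cdot v_y(D')\le e\cdot v_y(D)$, and since $e>0$ we may cancel it to conclude $v_y(D')\le v_y(D)$.

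The main point to verify carefully is the \emph{existence} of a suitable codimension-one point $x\in X$ dominating each codimension-one point $y\in Y$, together with the fact that the ramification index is strictly positive. This is where surjectivity and normality are genuinely used: surjectivity guarantees the fiber over $y$ is nonempty, and normality of $X$ guarantees that after localizing we land in a DVR so that the valuation-theoretic comparison makes sense. One must also check that pullback of Cartier divisors along the local DVR extension is computed by the ramification index as claimed; this is the standard compatibility of $f^*$ with the valuations, which follows from the local description of $f^*$ as precomposition by $f^\sharp:\sO_{Y,y}\to\sO_{X,x}$.

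I expect the only real obstacle to be bookkeeping around the possibility that $f$ is not of relative dimension zero: a codimension-one point of $Y$ may have no codimension-one point of $X$ above it if $X$ maps with positive-dimensional fibers. The clean fix is to pass to the generic point of an irreducible component of $f^{-1}(y)$ that dominates $y$; since $f$ is surjective such a component exists, and its generic point $x$ satisfies $\operatorname{codim}_X x = \operatorname{codim}_Y y$ plus a correction, but what matters is only that $\sO_{X,x}$ localized appropriately is a DVR with $f^\sharp$ sending a uniformizer of $\sO_{Y,y}$ to a positive power of a uniformizer. Once this local picture is secured the inequality follows immediately, so the whole argument is essentially a reduction to the case of an extension of discrete valuation rings.
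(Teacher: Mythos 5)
Your proof is correct and follows essentially the same route as the paper's source: the paper gives no argument of its own for this lemma, citing \cite[Lem.~2.2]{KP}, and the proof there is precisely this reduction, via surjectivity, to a comparison of valuations for an extension of DVRs at codimension-one points. The only step to tighten is your hedge about ``a correction'': if $x$ is the generic point of an irreducible component of the fiber $f^{-1}(y)$ over a codimension-one point $y$, then $\dim \sO_{X,x}\le \dim \sO_{Y,y}+\dim \sO_{f^{-1}(y),x}=1+0$ by the dimension inequality for local homomorphisms of Noetherian local rings (the schemes here are of finite type over $k$), while $\dim \sO_{X,x}\ge 1$ because $x$ cannot be the generic point of $X$ (which maps to the generic point of $Y$, not to $y$); hence $x$ has codimension exactly one, $\sO_{X,x}$ is already a DVR by normality of $X$, and no further localization or correction is needed before running your valuation computation with $e=v_x(f^\sharp \pi_y)\ge 1$.
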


\begin{lemma}[\protect{\cite[Lem. 3.14]{Mi}}]\label{lem:increasing}
Let $X$ be a quasi-compact scheme and let $D, E$ be Cartier divisors on $X$ with $E \geq 0$.
Assume that the restriction of $D$ to the open subset $X \setminus E \subset X$ is effective.
Then, there exists a natural number $n_0 \geq 1$ such that $D + n \cdot E$ is effective for any $n \geq n_0$.
\end{lemma}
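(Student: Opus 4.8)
The plan is to reduce the statement to a finite affine computation and then read off $n_0$ from local equations. First I would use quasi-compactness to choose a \emph{finite} affine open cover $X=\bigcup_{i=1}^{r}U_i$ with $U_i=\Spec A_i$ on which both divisors are simultaneously trivialized: since $E$ is an effective Cartier divisor it is cut out on $U_i$ by a nonzerodivisor $e_i\in A_i$, and $D$ is given on $U_i$ by a unit $f_i$ of the total ring of fractions $\operatorname{Tot}(A_i)$ (i.e. $f_i=a_i/b_i$ with $a_i,b_i\in A_i$ nonzerodivisors). One obtains such a cover by first covering $X$ by opens trivializing $D$, refining to also trivialize $E$, and extracting a finite subcover. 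With these choices the Cartier divisor $D+nE$ has local equation $f_ie_i^{\,n}$ on $U_i$, and ``$D+nE$ is effective'' becomes the purely local, checkable assertion that $f_ie_i^{\,n}\in A_i$ for every $i$ (the nonzerodivisor condition being automatic, as a product of nonzerodivisors of $\operatorname{Tot}(A_i)$).

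Next I would extract the consequence of the hypothesis on each chart. The open set $U_i\cap(X\setminus E)$ equals $\Spec (A_i)_{e_i}$, because $\Supp E\cap U_i=V(e_i)$. Hence the assumption that $D|_{X\setminus E}$ is effective says exactly that the image of $f_i$ in $\operatorname{Tot}((A_i)_{e_i})$ lies in the subring $(A_i)_{e_i}$; write that image as $g_i/e_i^{\,m_i}$ with $g_i\in A_i$, $m_i\ge 0$. The crucial point is now that, because $e_i$ is a nonzerodivisor, the localization $A_i\to (A_i)_{e_i}$ is injective and carries nonzerodivisors to nonzerodivisors, so it induces an \emph{injection} $\operatorname{Tot}(A_i)\hookrightarrow\operatorname{Tot}((A_i)_{e_i})$. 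Under this injection $f_ie_i^{\,m_i}$ and $g_i$ have the same image, hence $f_ie_i^{\,m_i}=g_i\in A_i$ already inside $\operatorname{Tot}(A_i)$. Consequently $f_ie_i^{\,n}=g_ie_i^{\,n-m_i}\in A_i$ for all $n\ge m_i$.

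Finally I would globalize by setting $n_0=\max_{1\le i\le r}m_i$, which is finite precisely because the cover is finite. For every $n\ge n_0$ each local equation $f_ie_i^{\,n}$ is a regular (nonzerodivisor) function, so $D+nE$ is an effective Cartier divisor, which is the assertion.

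I expect the only genuinely delicate point to be the passage between $\operatorname{Tot}(A_i)$, its localization $(A_i)_{e_i}$, and $\operatorname{Tot}((A_i)_{e_i})$ on a possibly non-reduced, non-Noetherian scheme: one must verify that a nonzerodivisor stays a nonzerodivisor after inverting another nonzerodivisor, which is what guarantees the injection $\operatorname{Tot}(A_i)\hookrightarrow\operatorname{Tot}((A_i)_{e_i})$ used above and lets us clear denominators without introducing spurious $e_i$-torsion. Everything else is the standard dictionary between effective Cartier divisors and their regular local equations, together with the reduction to a finite cover via quasi-compactness.
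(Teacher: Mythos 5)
Your proof is correct, but note that this paper never proves the lemma itself: it is imported from \cite[Lem.~3.14]{Mi}, and the argument there is essentially the one you give — a finite affine cover trivializing both $D$ and $E$, the identification $U_i\cap(X\setminus E)=\Spec (A_i)_{e_i}$, clearing denominators via the injection $\operatorname{Tot}(A_i)\hookrightarrow\operatorname{Tot}((A_i)_{e_i})$ (valid because nonzerodivisors of $A_i$ stay nonzerodivisors after inverting the nonzerodivisor $e_i$), and finally $n_0=\max_i m_i$, which is finite by finiteness of the cover. So your proposal matches the source's approach, including your handling of the one delicate point (torsion-free passage between $\operatorname{Tot}(A_i)$, $(A_i)_{e_i}$ and $\operatorname{Tot}((A_i)_{e_i})$), and there is nothing to add.
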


\begin{lemma}[\protect{\cite[Lem. 1.3.7]{modsheaf1}}]\label{lem:no-extra-fiber}
Let $f : X \to Y$ be a separated morphism of schemes, and let $U \subset X$ be an open dense subset. 
Assume that the image $f(U)$ of $U$ is open in $Y$, and the induced morphism $U \to f(U)$ is proper (e.g., an isomorphism).
Then, we have $f^{-1}(f(U)) = U$.
\end{lemma}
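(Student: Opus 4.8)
The plan is to set $V := f(U)$ (an open subscheme of $Y$ by hypothesis) and to prove the stronger statement that $U = f^{-1}(V)$. Since $U \subseteq f^{-1}(V)$ is clear, and $U$ is open in $X$ hence open in $f^{-1}(V)$, it suffices to show that $U$ is simultaneously \emph{closed} and \emph{dense} in $f^{-1}(V)$: a subset that is both closed and dense in a topological space equals that space.

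For closedness, I would exploit the properness hypothesis through the cancellation property of proper morphisms. Write $g : f^{-1}(V) \to V$ for the restriction of $f$; it is separated because $f$ is separated and separatedness is stable under restriction to opens (equivalently, under base change along $V \hookrightarrow Y$). The inclusion $j : U \hookrightarrow f^{-1}(V)$ then fits into a factorization $U \xrightarrow{j} f^{-1}(V) \xrightarrow{g} V$ whose composite is precisely the proper morphism $U \to V$. By the standard cancellation lemma---if $g \circ j$ is proper and $g$ is separated, then $j$ is proper---the open immersion $j$ is proper. A proper morphism is universally closed, hence closed, so its image $U = j(U)$ is closed in $f^{-1}(V)$.

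For density, observe that $U$ is dense in $X$ by assumption, so its closure in the open subset $f^{-1}(V)$ is $\overline{U} \cap f^{-1}(V) = X \cap f^{-1}(V) = f^{-1}(V)$; thus $U$ is dense in $f^{-1}(V)$. Combining the two facts, $U$ is closed and dense in $f^{-1}(V)$, whence $U = f^{-1}(V) = f^{-1}(f(U))$, as desired.

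The only genuinely delicate point is the closedness step, and there the crucial input is precisely the separatedness hypothesis on $f$: it is what allows the cancellation lemma to upgrade the properness of $U \to V$ into properness (and hence closedness) of the inclusion $U \hookrightarrow f^{-1}(V)$. Without separatedness the inclusion could fail to be closed, and indeed the conclusion itself can fail. Everything else is formal point-set topology, so I expect no serious obstacle beyond correctly invoking this cancellation property.
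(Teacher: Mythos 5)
Your proof is correct. Note that the paper under review does not actually prove this lemma---it quotes it verbatim from \cite[Lem.~1.3.7]{modsheaf1}---but your argument is exactly the standard one given in that source: reduce to $V = f(U)$, use the cancellation property (proper composed with separated implies proper) to conclude that the open immersion $U \hookrightarrow f^{-1}(V)$ is proper, hence closed, and then combine closedness with density of $U$ in $f^{-1}(V)$ to get $U = f^{-1}(V)$.
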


\begin{lemma}[\protect{\cite[Lem. 1.6.3]{modsheaf1}}]\label{lem:b-up}
Let $f: U\to X$ be an \'etale morphism of quasi-compact and quasi-separated integral schemes. Let $g: V\to U$ be a proper birational morphism, $T \subset U$ a closed subset such that $g_{|U-T}$ is an isomorphism and  $S$
the closure of $f(T)$ in $X$.
Then there exists a closed subscheme $Z\subset X$ supported in $S$ such that
$U\times_X \Bl_Z(X)  \to U$ factors through $V$. 
\end{lemma}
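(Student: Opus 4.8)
The plan is to turn the assertion into an existence statement about ideal sheaves on $X$, and to solve it by first dominating $V$ by a blow-up of $U$ and then descending the centre of that blow-up along $f$.

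First a reduction. Since $f$ is \'etale, hence flat, blowing up commutes with base change along $f$, so for any finite-type ideal $\sI\subseteq \sO_X$ one has $U\times_X\Bl_{\sI}(X)=\Bl_{f^*\sI}(U)$. Thus it suffices to produce $\sI$ with $V(\sI)\subseteq S$ such that $\Bl_{f^*\sI}(U)\to U$ factors through $g$. Over the open set $U-T$ the morphism $g$ is an isomorphism, so there the desired factorization exists and is \emph{unique} (compose the blow-down with $g^{-1}$). Because giving a $U$-morphism into the fixed $U$-scheme $V$ is local on the base $U$, it is therefore enough to construct a factorization over some open neighbourhood $\Omega$ of $T$; on the overlap $\Omega\cap(U-T)$ it will automatically agree with the tautological one, so the two glue to a global morphism.

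Next I dominate $V$ by a blow-up. As $g$ is proper and birational (an isomorphism over the dense open $U-T$) and the schemes in the intended application are of finite type over $k$, the standard theorem that a proper birational morphism is dominated by a blow-up yields a finite-type ideal $\sI'\subseteq\sO_U$ with $V(\sI')\subseteq T$ together with a factorization $\Bl_{\sI'}(U)\xrightarrow{h}V\xrightarrow{g}U$ of the blow-down. Using the product formula $\Bl_{\sI'\cdot\sJ}(U)=\Bl_{\sJ\sO_{\Bl_{\sI'}(U)}}(\Bl_{\sI'}(U))$, the blow-up of any ideal divisible by $\sI'$ dominates $\Bl_{\sI'}(U)$. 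Hence it suffices to find $\sI$ on $X$, with $V(\sI)\subseteq S$, such that $f^*\sI$ is divisible by $\sI'$ on a neighbourhood $\Omega$ of $T$: then $\Bl_{f^*\sI}(U)|_\Omega$ maps to $\Bl_{\sI'}(U)|_\Omega\xrightarrow{h}V$, which is what Step~1 requires.

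Finally comes the descent of the centre, which is the heart of the matter and the main obstacle. An \'etale morphism is quasi-finite, so by Zariski's main theorem $f$ factors as an open immersion $U\inj\ol U$ followed by a finite morphism $\ol f:\ol U\to X$; Lemma \ref{lem:no-extra-fiber} is used to guarantee that this compactification creates no spurious fibres over $S$ meeting $T$, so that the construction below is valid near $T$. I then extend $\sI'$ to $\ol U$ and take a norm-type pushforward along $\ol f$ to obtain $\sI$ on $X$, whose support is contained in $\ol{f(V(\sI'))}\subseteq S$. The point is that, along the clopen decomposition $U\times_X U=U\sqcup R$ (the diagonal of an \'etale morphism being open and closed, with $R$ the off-diagonal part), the identity sheet contributes the factor $\sI'$ while the remaining sheets contribute factors supported on $f^{-1}(S)-T$; hence $f^*\sI$ is divisible by $\sI'$ on a neighbourhood of $T$. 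The difficulty here is genuine: a general ideal on $U$ is not the pullback of an ideal on $X$, and $f^{-1}(S)$ really does acquire components other than $T$ coming from these extra sheets, so one must both manufacture the invariant ideal carefully via the finite compactification and control the extra sheets through Lemma \ref{lem:no-extra-fiber}. Granting this, the product formula provides the factorization over $\Omega$, which glues with the tautological factorization over $U-T$ to yield $U\times_X\Bl_Z(X)\to V$ with $Z=V(\sI)$ supported in $S$, as required.
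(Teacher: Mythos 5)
A preliminary caveat: this paper never proves Lemma \ref{lem:b-up} itself --- it is quoted verbatim from \cite[Lem.~1.6.3]{modsheaf1} --- so there is no in-text proof to compare against line by line; I therefore judge your argument on its own terms. Your Steps 1 and 2 are correct and unproblematic: flat base change identifies $U\times_X\Bl_Z(X)$ with $\Bl_{\sI_Z\sO_U}(U)$; a factorization through $V$ is unique over $U-T$ (where $g$ is an isomorphism) and hence glues with one constructed near $T$; and $V$ is dominated by a blow-up $\Bl_{\sI'}(U)$ with $V(\sI')\subseteq T$ by the Raynaud--Gruson theorem (\cite{rg}, or \cite{stack}), together with the product formula for blow-ups.

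The genuine gap is Step 3, which you yourself only ``grant'': it is exactly the content of the lemma, and the steps you sketch for it would fail. Concretely: (i) the ``norm-type pushforward'' along $\ol{f}$ is undefined, because Zariski's main theorem produces $\ol{f}:\ol{U}\to X$ finite but in general \emph{not flat} (and $X$ is not assumed normal), and there is no norm of ideals along a non-flat finite morphism. The construction that does work is the $0$-th Fitting ideal: let $\ol{\sI}\subset\sO_{\ol{U}}$ be the ideal of the schematic closure of $V(\sI')$ in $\ol{U}$ and set $\sI:=\operatorname{Fitt}_0\bigl(\ol{f}_*(\sO_{\ol{U}}/\ol{\sI})\bigr)$; this is co-supported in $\ol{f}\bigl(\ol{V(\sI')}\bigr)\subseteq S$ and commutes with the flat base change $f$. (ii) Your justification of divisibility is false as stated: when $f|_T$ is non-injective (say $t_1\neq t_2\in T$ with $f(t_1)=f(t_2)$), the off-diagonal sheets do contribute factors vanishing at points of $T$, not only on $f^{-1}(S)-T$. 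The correct point is that this does not matter: using the clopen decomposition $\ol{U}\times_X U=\Gamma_j(U)\sqcup R$ (the graph of $j:U\inj\ol{U}$ is open because $f$ is \'etale and closed because $\ol{f}$ is separated), together with $\operatorname{Fitt}_0(M\oplus N)=\operatorname{Fitt}_0(M)\cdot\operatorname{Fitt}_0(N)$ and $\operatorname{Fitt}_0(\sO_U/\sI')=\sI'$, one gets the \emph{global} factorization $f^*\sI=\sI'\cdot\sK$, so $\Bl_{f^*\sI}(U)$ dominates $\Bl_{\sI'}(U)$ everywhere and even your gluing step becomes superfluous. (iii) The appeal to Lemma \ref{lem:no-extra-fiber} is inapplicable: its hypothesis demands that $U\to f(U)$ be proper, which is precisely what a non-finite \'etale $f$ is not, and its intended conclusion (no boundary points of $\ol{U}$ over $S$) is simply false in general --- take $U=(\A^1\setminus\{0\})\sqcup\A^1\to X=\A^1$ with $T=\{0\}$ in the second component: the boundary point of $\ol{U}=\A^1\sqcup\A^1$ lies over $S=\{0\}$. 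So the descent of the blow-up center --- which, as you rightly say, is the heart of the matter --- is missing; the architecture you chose can be completed, but by a different mechanism (Fitting ideals and global divisibility), not by the support control you attempt, which is both unobtainable and unnecessary.
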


\subsection{The cofinality theorem}

Recall the following notion from \cite[Def. 1.8.1]{modsheaf1}.

\begin{defn}\label{d1}
For $M \in \ulMSm$, let $\Comp (M)$ be the category whose objects are morphisms $j : M \to N$ in $\ulMSm$ such that 
\begin{enumerate}
\item $N \in \MSm$,
\item $j$ is ambient and minimal,
\item the morphism $\ol{j} : \ol{M} \to \ol{N}$ is a dense open immersion, and 
\item there are effective Cartier divisors $M_N^\infty$ and $C$ on $\ol{N}$ such that $N^\infty = M_N^\infty + C$ and $|C| = \ol{N} - \ol{j}(\ol{M})$,
\end{enumerate}
and morphisms $(j_1 : M \to N_1) \to (j_2 : M \to N_2)$ are morphisms $f : N_1 \to N_2$ in $\MSm$ such that $f \circ j_1 = j_2$.
Note that for any $(j : M \to N) \in \Comp (M)$, the morphism $j^\o : M^\o  \to N^\o$ is an isomorphism in $\Sm$.
By \cite[Lem. 1.8.2]{modsheaf1}, the category $\Comp (M)$ is a cofiltered ordered set.
\end{defn}

\begin{ex} Take $M = (\A^2, 0 \times \A^1)$, $N_1 = (\P^1 \times \P^1, 0 \times \P^1 + \infty \times \P^1 + \P^1 \times \infty)$, 
$N_2 =$ (blow up of $\P^1 \times \P^1$ at $\infty \times \infty$, pullback of $N_1^\infty$).
%? (Please check if this is correct.)
Then $N_1$ and $N_2$ are both in $\Comp(M)$, and $N_2$ dominates $N_1$.
%If you like, we can also increase the multiplicity of the divisors, too.
\end{ex}

We give a similar definition for squares. 

\begin{defn}
Let $S$ be an object in $\ulMSm^\Sq$.
Define $\Comp (S)$ be the category whose objects are morphisms $j : S \to T$ in $\ulMSm^\Sq$ such that for each $(ij) \in \Sq$, the morphism $j(ij) : S(ij) \to T(ij)$ belongs to $\Comp (S(ij))$, and whose morphisms $(j_1 : S \to T_1) \to (j_2 : S \to T_2)$ are morphisms $f : T_1 \to T_2$ in $\MSm^{\Sq}$ such that $f \circ j_1 = j_2$.
\end{defn}

\begin{prop}\label{p1}
For any $S \in \ulMSm^\Sq$, the category $\Comp (S)$ is cofiltered and ordered.
\end{prop}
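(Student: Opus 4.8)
The plan is to establish three properties of $\Comp(S)$: that it is a poset (``ordered''), that it is nonempty, and that any two objects admit a common lower bound; the last two together give cofilteredness. Throughout I use that a morphism in $\ulMSm$ is determined by its restriction to interiors, i.e.\ that the forgetful functor to $\Sm$ is faithful.

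\emph{Ordered.} A morphism $f\colon (j_1\colon S\to T_1)\to (j_2\colon S\to T_2)$ in $\Comp(S)$ has, at each vertex $(ij)\in\Sq$, a component $f(ij)\colon T_1(ij)\to T_2(ij)$ lying in $\Comp(S(ij))$. Since each $\Comp(S(ij))$ is ordered by \cite[Lem. 1.8.2]{modsheaf1}, the component $f(ij)$ is unique, hence so is $f$; thus between any two objects of $\Comp(S)$ there is at most one morphism. For the same reason every diagram in $\Comp(S)$ commutes, as it does so after passing to interiors. This also shows that in the constructions below, commutativity of all squares and triangles is automatic once the relevant morphisms of modulus pairs are shown to exist.

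\emph{Common lower bounds.} Let $T_1,T_2\in\Comp(S)$. I build $T_3\in\Comp(S)$ dominating both in two stages. First fix the ambient spaces: for each vertex let $\ol{T_3(ij)}$ be the normalization of the closure of the diagonal image of $\ol{S(ij)}$ in $\ol{T_1(ij)}\times\ol{T_2(ij)}$, blown up if necessary so that $\ol{T_3(ij)}\setminus S(ij)^\o$ is the support of an effective Cartier divisor. The two projections give proper birational morphisms $\pi_r\colon\ol{T_3(ij)}\to\ol{T_r(ij)}$ $(r=1,2)$ that are isomorphisms over $\ol{S(ij)}$, and $\ol{S(ij)}\hookrightarrow \ol{T_3(ij)}$ is a dense open immersion. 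Next choose the moduli: fix a Cartier divisor $G_{ij}$ on $\ol{T_3(ij)}$ restricting to $S(ij)^\infty$ (possible after a further blow-up) and let $E_{ij}$ be the reduced divisor supported on $\ol{T_3(ij)}\setminus\ol{S(ij)}$. Set $T_3(ij)^\infty:=G_{ij}+n_{ij}E_{ij}$, where the integers $n_{ij}$ are determined by treating the vertices in decreasing order for $\Sq$, namely $11$, then $01$ and $10$, then $00$, so that for every edge $S(ij)\to S(i'j')$ the target $(i'j')$ is treated before the source $(ij)$. For $n_{ij}\gg 0$ one can guarantee simultaneously (a) that $T_3(ij)^\infty\ge \pi_r^\ast T_r(ij)^\infty$ for $r=1,2$, and (b) that for each edge $S(ij)\to S(i'j')$ the rational map $\ol{T_3(ij)}\dashrightarrow \ol{T_3(i'j')}$ induced by $S(ij)^\o\to S(i'j')^\o$ is admissible. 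Indeed, for (a) the divisor $\pi_r^\ast T_r(ij)^\infty-G_{ij}$ vanishes on $\ol{S(ij)}=\ol{T_3(ij)}\setminus\Supp E_{ij}$ (there $\pi_r$ is an isomorphism and both terms restrict to $S(ij)^\infty$), so Lemma~\ref{lem:increasing} applied with $E=E_{ij}$ yields (a); for (b), writing $\ol{T_3(ij)}\xleftarrow{a}\Gamma^N\xrightarrow{b}\ol{T_3(i'j')}$ for the normalized graph closure, the inequality $a^\ast T_3(ij)^\infty\ge b^\ast T_3(i'j')^\infty$ holds over $\ol{S(ij)}$ because there it reduces, via the normalized graph closure of $\ol{S(ij)}\dashrightarrow \ol{S(i'j')}$, to the admissibility of the edge $S(ij)\to S(i'j')$ of $S$ (using that $T_3(ij)^\infty$ and $T_3(i'j')^\infty$ restrict to $S(ij)^\infty$ and $S(i'j')^\infty$, and that this edge lands in $\ol{S(i'j')}$), so Lemma~\ref{lem:increasing} applied with $E=a^\ast E_{ij}$ gives (b).

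\emph{The main difficulty} is precisely the interplay between (a) and (b): enlarging the modulus at a vertex in order to dominate $T_1$ and $T_2$ also enlarges it as the \emph{target} of any incoming edge, which would threaten that edge's admissibility. This potential conflict is removed by the order of the construction: because $\Sq$ has a greatest element and all of its arrows increase, we may fix the moduli from the terminal vertex downwards, so that whenever (b) is imposed on a source the relevant target modulus has already been chosen. Both (a) and (b) then only require $n_{ij}$ to be \emph{large}, and never compete. Finally $T_3(ij)\in\Comp(S(ij))$: its interior is $S(ij)^\o$, the morphism $S(ij)\to T_3(ij)$ is minimal since $T_3(ij)^\infty$ restricts to $S(ij)^\infty$, and the boundary condition holds by construction. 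The admissible edges assemble $T_3$ into an object of $\MSm^\Sq$, and (a) makes the projections $T_3\to T_r$ into morphisms of $\Comp(S)$. Nonemptiness is the same construction with the domination requirement (a) dropped: take for $\ol{T(ij)}$ any normal proper compactification of $\ol{S(ij)}$ with $\ol{T(ij)}\setminus S(ij)^\o$ a Cartier divisor, and impose only (b) by the same terminal-to-initial procedure. Hence $\Comp(S)$ is nonempty, codirected and ordered, i.e.\ cofiltered and ordered.
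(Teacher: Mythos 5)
Your strategy---direct construction of a common refinement, fixing the moduli from the terminal vertex of $\Sq$ downwards so that the domination constraints (a) and the admissibility constraints (b) never compete---is sound, and it is genuinely different from the paper's proof, which contains no geometry at all: the paper quotes from \cite{modsheaf1} that $\Comp$ represents the pro-left adjoint $\tau_s^!$ of the inclusion $\tau_s\colon \MSm\to\ulMSm$ and that each $\Comp(M)$ is ordered and cofiltered, and then invokes the purely categorical Lemma \ref{l2} for the finite loop-free category $\Delta=\Sq$. Your vertex ordering is precisely the concrete shadow of the induction in Lemma \ref{l2}, which removes a vertex with no incoming arrows and treats it after all the others; and your step (b) re-proves by hand, via graph closures and Lemma \ref{lem:increasing}, the functoriality of $\Comp$ along morphisms of $\ulMSm$ that the paper simply imports from \cite[Rem.~1.8.5]{modsheaf1}. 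The paper's route is shorter because it reuses the single-pair theory of \cite{modsheaf1}; yours is self-contained and makes the mechanism explicit.

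There is, however, one step that fails as written: the normalizations. A modulus pair is only required to have a \emph{reduced} ambient space, not a normal one. If some $\ol{S}(ij)$ is not normal, then $\ol{S}(ij)$ is not an open subscheme of the normalization of the graph closure: the normalization map fails to be an isomorphism over $\ol{S}(ij)$, the projections $\pi_r$ are then not isomorphisms over $\ol{S}(ij)$, and condition (3) of Definition \ref{d1} (that $\ol{j}$ be a dense open immersion) is violated. The same objection applies to the ``normal proper compactification'' in your nonemptiness argument. The repair is simply to delete the normalizations: nothing in your argument needs them, since Lemma \ref{lem:increasing} requires only quasi-compactness, and admissibility of an edge is tested on the normalized graph of that edge, not on the ambient spaces. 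Two further points. First, the \emph{reduced} divisor supported on $\ol{T_3(ij)}\setminus\ol{S}(ij)$ need not be Cartier; take for $E_{ij}$ instead the effective Cartier divisor with that support furnished by your first blow-up. Second, your parenthetical claim that each edge ``lands in $\ol{S}(i'j')$'' is the crux of (b) and needs justification: because the edge $S(ij)\to S(i'j')$ is a morphism in $\ulMSm$, the projection from its graph closure $\Gamma_S\subset\ol{S}(ij)\times\ol{S}(i'j')$ to $\ol{S}(ij)$ is proper, so $\Gamma_S$ is closed in $\ol{S}(ij)\times\ol{T_3(i'j')}$ and therefore coincides with the graph closure taken there. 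Without this, the graph over $\ol{S}(ij)$ could meet $\ol{S}(ij)\times(\ol{T_3(i'j')}\setminus\ol{S}(i'j'))$, where $b^\ast T_3(i'j')^\infty$ contains the already-fixed multiple $n_{i'j'}E_{i'j'}$ while $a^\ast T_3(ij)^\infty$ stays bounded there, and no choice of $n_{ij}$ would give the required inequality.
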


\begin{proof}
Let $\tau_s : \MSm \to \ulMSm$ be the inclusion functor. 
Then $\tau_s$ admits a pro-left adjoint \cite[I.8.11]{SGA4} $\tau_s^!:\ulMSm\to \pro{}\MSm$ which is represented by $\Comp$, i.e., we have 
\[
\tau_s^! (M) = ``\lim_{(M \to N) \in \Comp (M)}" N
\]
(see \cite[Rem. 1.8.5]{modsheaf1}).
Thus, the assertion follows from Lemma  \ref{l2}, applied to 
$\sC = \MSm$, $\sC' = \ulMSm$, $u=\tau_s$, $v=\tau_s^!$, $I=\Comp$ and $\Delta = \Sq$.
\end{proof}

\begin{defn}
For any $S \in \ulMSm^\Sq$, define $\Comp^{\MV} (S)$ as the full subcategory $\Comp (S)$ consisting of objects $S \to T$ such that $T$ is an $\MV$-square. 
\end{defn}

The main result of this paper is the following.

\begin{thm}\label{main:cofinality-MV}
Let $S$ be an $\ulMV^\fin$-square with $\ol{S}(11)$ normal.
Then, for any $(S \to T) \in \Comp (S)$, there exists $(S \to T') \in \Comp^{\MV} (S)$ which dominates $(S \to T)$ in $\Comp (S)$, and such that $T'(11) \to T(11)$ is ambient and minimal (hence an isomorphism in $\MSm$). 

In particular, $\Comp^{\MV} (S)$ is cofinal in $\Comp (S)$.
\end{thm}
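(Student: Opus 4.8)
The plan is to fix $(S\to T)\in\Comp(S)$ and build a dominating $(S\to T')\in\Comp^{\MV}(S)$ by \emph{keeping the corner $(11)$ unchanged}, i.e. setting $T'(11):=T(11)$; then $T'(11)\to T(11)$ is the identity, hence ambient, minimal and an isomorphism, as the statement demands. It then remains to choose compactifications of the three remaining corners, refining those of $T$, so that the resulting square $T'$ satisfies the three conditions of Definition~\ref{def:new-MV}. I expect the first two conditions to be comparatively formal. For the pull-back condition, I would simply \emph{define} $T'(00):=T'(01)\times_{T'(11)}T'(10)$ in $\MSm$, which is representable because the interiors form the cartesian elementary Nisnevich square $S^\o=T^\o$. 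For the witness condition, I would invoke Theorem~\ref{ex-pc}: applied to the étale maps $S(01)^\o\to S(11)^\o$ and $S(10)^\o\to S(11)^\o$ over the fixed proper compactification $T(11)$ of $S(11)$, it yields partial compactifications $S'(01),S'(10)$ whose ambient maps to $\ol{S'}(11):=\ol{T}(11)$ stay étale; together with $S'(00):=S'(01)\times_{S'(11)}S'(10)$ this produces an $\ulMV^\fin$-square $S'$ with $S'(11)\in\MSm$, $S'^\o\cong S^\o=T^\o$ and $S'(11)\cong T(11)$, which witnesses the second condition for any compactification $T'$ of $S'$.

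The essential point, and the main obstacle, is the off-diagonal condition: one must arrange that $\OD(q_{T'})\to\OD(p_{T'})$ is an isomorphism in $\MSm$. On interiors this map is \emph{already} an isomorphism, since $S^\o$ is an elementary Nisnevich square and the off-diagonal of such a square is preserved by the excision morphism $v_{S}\times v_{S}$; concretely $\OD(q_{T'})^\o=\OD(q_S)^\o\cong\OD(p_S)^\o=\OD(p_{T'})^\o$, a common interior I will call $E$. Thus $\OD(q_{T'})$ and $\OD(p_{T'})$ are two proper modulus pairs with the same interior $E$, and the whole task is to choose the compactifications of the three corners so that both their ambient spaces \emph{and} their moduli coincide.

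To equalise the ambient spaces I would dominate $T$ by suitable blow-ups: the two off-diagonals furnish two compactifications of $E$, and, using Lemma~\ref{lem:b-up}, I would transport a common blow-up centre back to the corners $T'(00),T'(01),T'(10)$, so that after replacing $T'$ by a further domination (available since each $\Comp(S(ij))$ is a cofiltered ordered set by Definition~\ref{d1} and Proposition~\ref{p1}) the two induced compactifications of the off-diagonals acquire a common ambient space. To equalise the moduli on this common ambient space I would compare the two divisors via the divisor lemmas: normality of $\ol{S}(11)$ is inherited by the étale/open neighbourhoods, hence by the normalisations of the off-diagonal ambient spaces, so Lemma~\ref{lKL} converts a pulled-back inequality of divisors into an inequality downstairs, while Lemma~\ref{lem:increasing} lets me absorb the boundary $E$; enlarging the divisor $C$ in condition~(iv) of Definition~\ref{d1} by a large multiple of the boundary then makes the two moduli agree, yielding $\OD(q_{T'})\cong\OD(p_{T'})$.

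With the pull-back, witness and off-diagonal conditions verified and $T'\to T$ a domination in $\Comp(S)$ fixing the corner $(11)$, we obtain $(S\to T')\in\Comp^{\MV}(S)$ dominating $(S\to T)$; cofinality of $\Comp^{\MV}(S)$ in the cofiltered ordered category $\Comp(S)$ is then immediate. The delicate part, which I expect to force the two-stage (special case, then general case) argument signalled by the paper's Sections~\ref{s3} and~\ref{section:cofinality-MV}, is precisely the last two steps: the blow-ups matching the two off-diagonal compactifications live on \emph{different} corners of the square, so the off-diagonal functor must turn them into compatible modifications of $\OD(q_{T'})$ and $\OD(p_{T'})$ while preserving both the pull-back property and the domination, and the simultaneous control of the divisors is where the normality hypothesis on $\ol{S}(11)$ is indispensable.
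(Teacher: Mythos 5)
Your plan has the right broad silhouette (a two-stage argument, increasing multiplicities, normality feeding Lemma \ref{lKL}), but two of its load-bearing steps fail as stated. First, you cannot keep $T'(11)=T(11)$ literally, and your reading of Theorem \ref{ex-pc} is incorrect on exactly this point: that theorem produces $S'(11)\in\Comp(S(11))$ which \emph{dominates} $T$ via a minimal morphism $S'(11)\to T$, and its ambient space is a blow-up of $\ol{T}$ coming from Raynaud--Gruson platification --- not $\ol{T}$ itself. In general the \'etale maps $\ol{S}(01)\to\ol{S}(11)$ do not spread out over the \emph{given} compactification $\ol{T}(11)$; one must modify it, which is precisely why the theorem only asserts that $T'(11)\to T(11)$ is ambient and minimal (an isomorphism in $\MSm$, not an equality), and why the paper's proof reduces to the partially compact case --- where $\ol{T}(11)=\ol{S}(11)$ is forced by properness and then genuinely stays fixed --- via Theorem \ref{ex-pc}, Proposition \ref{prop:partial-cofinality}, and the fix-up Lemma \ref{lem:comp-modif} that turns the composite $S\to S'\to T'_1$ back into an element of $\Comp(S)$. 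Relatedly, defining $T'(00):=T'(01)\times_{T'(11)}T'(10)$ settles condition (1) of Definition \ref{def:new-MV} but not the requirement that $(S\to T')$ \emph{dominate} $(S\to T)$: the fiber product admits a morphism to $T(00)$ only after the moduli on the $(10)$ and $(01)$ corners have been raised by $mD_{10}$, $nD_{01}$ with $m,n\geq n_T$ (Lemma \ref{lem:bound-DD}, Proposition \ref{prop:cofinality-pullback}); this use of Lemma \ref{lem:increasing} is absent from your plan.

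Second, your treatment of the off-diagonal condition rests on a goal that is both stronger than necessary and unreachable by your method. An isomorphism in $\MSm$ does not require the two off-diagonals to share an ambient space and a modulus; it requires mutually inverse admissible morphisms. The paper never equalizes ambient spaces: it raises the multiplicity on the $(01)$ corner only ($T'=T_{0,n}$), proves the divisor inequality \eqref{eq:nD} on the normalized graph $\Gamma_1$, pulls it back to a common resolution, descends along the proper birational morphism $\xi:\Gamma_2^N\to\Gamma_3^N$ of normal schemes using Lemma \ref{lKL} (this is where the normality hypothesis actually enters), and then uses cartesianness of the off-diagonal square (Lemma \ref{lem:cartesian-OD}) to promote the resulting lift $\OD(p_{T'})\to\OD(q_T)$ to an inverse of $\OD(q_{T'})\to\OD(p_{T'})$. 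Your alternative --- transporting blow-up centres from the off-diagonals back to the corners via Lemma \ref{lem:b-up} --- does not go through: that lemma requires an \'etale morphism, whereas the ambient projections $s_i:\ol{\OD(p_T)}\to\ol{T}(01)$ are \'etale only on interiors; and, more fundamentally, blowing up a corner and pulling back its modulus replaces that corner by an $\MSm$-isomorphic object, so it cannot change the isomorphism classes of $\OD(q_{T'})$ and $\OD(p_{T'})$ at all. What makes condition (3) of Definition \ref{def:new-MV} achievable is a change of \emph{multiplicities}, not of ambient spaces, and that mechanism is the missing idea in your proposal.
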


The proof of Theorem \ref{main:cofinality-MV} will be given in \S\S \ref{s3} and \ref{section:cofinality-MV}.

\section{Partial compactifications}\label{section:pc}

\subsection{Definition and statement}

\begin{definition}\label{def:partial-comp} Let $S$ be an $\ulMVfin$-square.

\begin{enumerate}
\item 
We say that $S$ is \emph{normal} if $\ol{S}(11)$ is normal (Note that this implies that $\ol{S}(ij)$ is normal for all $i,j \in \{0,1\}$). 
\item 
A \emph{partial compactification} of $S$ is a morphism $S \to S'$ in \allowbreak $(\ulMSm^\fin )^\Sq$ such that 
\begin{enumerate}
\item $S'$ is an $\ulMVfin$-square, 
\item the morphism $S(11) \to S'(11)$ belongs to $\Comp (S(11))$,
\item $\ol{S}(ij) \to \ol{S}'(ij)$ are open immersions, and
\item $\ol{S}(ij) \iso \ol{S}'(ij) \times_{\ol{S}'(11)} \ol{S}(11)$.
\end{enumerate}
\item An $\ulMVfin$-square is called \emph{partially compact} if $S(11) \in \MSm$.
\end{enumerate}
\end{definition}

\begin{ex}
%It is not easy to come up with higher dimensional example, but I think that curve case is enough to illustrate the idea here.
The simplest case is when $S$ is given by a Nisnevich square of schemes with empty divisors (and this is the essential case).

Take (in characteristic $\neq 2$)
\begin{align*}
S(11) &= (\A^1 - \{0\}, \emptyset)\\
S(10) &= (\A^1 - \{0,1\}, \emptyset)\\
S(01) &= (\A^1 - \{0, -1\} , \emptyset)\\
S(00) &= (\A^1 - \{0,1,-1\}, \emptyset)
\end{align*}
with $\ol{S}(01) \to \ol{S}(11)$ the square map $t \mapsto t^2$ (and the horizontal maps the inclusions). Then $S$ is an $\ulMV^\fin$-square,  as it is a distinguished Nisnevich square if we forget the empty divisor. 
A partial compactification $S \to S'$ is given by 
\begin{align*}
S'(11) &= (\P^1, 0 + \infty)\\
S'(10) &= (\P^1 - \{1\}, 0+\infty)\\
S'(01) &= (\P^1 - \{0,-1\}, 2\infty)\\
S'(00) &= (\P^1 - \{0,1,-1\}, 2\infty)
\end{align*}
(Note that the last two $\infty$'s have multiplicity $2$.)
\end{ex}

\begin{remarks}
(1) If $S \to S'$ is a partial compactification, then the isomorphism in Condition (d) induces an isomorphism $S (ij) \iso S'(ij) \times_{S'(11)} S(11)$ in $\ulMSm^\fin$, where the right hand side denotes the fiber product in $\ulMSm^\fin$, which exists by the minimality of the projection maps \cite[Cor. 1.10.7]{modsheaf1}.

(2) If $S \to S'$ is a partial compactification, then the induced morphism $S(ij)^\o \to S'(ij)^\o$ are isomorphisms for all $i,j \in \{0,1\}$.
This is true for $i=j=1$ by Condition (2) (b). For other $i,j$, we need to prove $\ol{S}'(ij)-\ol{S}(ij) \subset |S' (ij)^\infty |$. 
The left hand side equals the pullback of $\ol{S}'(11) - \ol{S}(11)$ by the map $\ol{S}'(ij) \to \ol{S}'(11)$ by Condition (2) (d).
Since $\ol{S}'(11) - \ol{S}(11) \subset |S'(11)^\infty|$ by the previous case, we obtain $\ol{S}'(ij)-\ol{S}(ij) \subset |S'(11)^\infty  \times_{\ol{S}'(11)} \ol{S}'(ij)| = |S'(ij)^\infty |$, where the last equality follows from the minimality of $S'(ij) \to S(11)$. 
\end{remarks}

The main result of this section is the following theorem.

\begin{thm}\label{ex-pc}
For any $\ulMVfin$-square $S$ and for any compactification $T\in \Comp (S(11))$, there exists a partial compactification $S \to S'$ such that $S'(11) \in \Comp (S(11))$ dominates $T$, and the morphism $S'(11) \to T$ is minimal. 
\end{thm}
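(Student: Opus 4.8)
The plan is to construct $S'$ by extending the elementary Nisnevich square underlying $S$ across the boundary of the given compactification $\ol{T}$ of $\ol{S}(11)$, one vertex at a time, and then to equip the result with the unique minimal divisors. Write $Z := (\ol{S}(11) - \ol{S}(10))_\red$ for the distinguished closed subset and $Z_V := (\ol{S}(01) - \ol{S}(00))_\red$, so that $\ol{p}_S$ identifies $Z_V$ with $Z$. Let $\ol{Z}$ denote the closure of $Z$ in $\ol{T}$; since $Z$ is closed in $\ol{S}(11)$ and $\ol{S}(11)$ is open in $\ol{T}$, we have $\ol{Z} \cap \ol{S}(11) = Z$. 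I would tentatively set $\ol{S}'(11) := \ol{T}$ and $\ol{S}'(10) := \ol{T} - \ol{Z}$; the latter is then an open of $\ol{T}$ whose trace on $\ol{S}(11)$ is exactly $\ol{S}(10)$, which is the fibre-product condition (d) of Definition~\ref{def:partial-comp} at the vertex $(10)$.

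The core step is to extend the étale cover $\ol{p}_S : \ol{S}(01) \to \ol{S}(11)$ over $\ol{S}'(11)$. First I would compactify $\ol{S}(01)$ relatively to $\ol{T}$: a Nagata compactification followed by normalisation of $\ol{T}$ in $\ol{S}(01)$ produces, via Zariski's main theorem, a finite morphism $\ol{Y} \to \ol{T}$ into which $\ol{S}(01)$ embeds as a dense open and which is étale in a neighbourhood of $\ol{S}(01)$. I would then take $\ol{S}'(01)$ to be the étale locus of $\ol{Y} \to \ol{T}$, shrunk by removing the closure of the complement $\ol{Y}\times_{\ol{T}}\ol{S}(11) - \ol{S}(01)$, so that by construction its trace over $\ol{S}(11)$ is $\ol{S}(01)$; finally set $\ol{S}'(00) := \ol{S}'(01) \times_{\ol{S}'(11)} \ol{S}'(10)$. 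Applying Lemma~\ref{lem:no-extra-fiber} to the distinguished component $Z_V$, which maps isomorphically---hence properly---onto $Z$, shows that over the interior the cover carries no spurious sheet along $Z$, so that the restriction of the new square over $\ol{S}(11)$ recovers $S$ and the maps $\ol{S}(ij) \hookrightarrow \ol{S}'(ij)$ are open immersions.

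The main obstacle is to make the extended square a genuine $\ulMVfin$-square along the new boundary $\ol{Z} - Z$: the splitting condition $(\ol{S}'(01) - \ol{S}'(00))_\red \iso (\ol{S}'(11) - \ol{S}'(10))_\red$ may fail there, because the closure of $Z_V$ in $\ol{Y}$ need not map isomorphically onto $\ol{Z}$ and the remaining sheets of the cover may accumulate along $\ol{Z}$. This is precisely where I expect to have to replace $\ol{T}$ by a modification. I would apply Lemma~\ref{lem:b-up} to the étale morphism $\ol{Y} \to \ol{T}$ (localised near the boundary) and to the proper birational modification that separates the distinguished sheet $\ol{Z_V}$ from the others, obtaining a closed subscheme $W \subset \ol{T}$ supported in $\ol{T} - \ol{S}(11)$ such that the cover pulled back to $\Bl_W(\ol{T})$ acquires the desired splitting. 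Taking $\ol{S}'(11) := \Bl_W(\ol{T})$ then yields an $\ulMVfin$-square; since $W$ lies in the boundary, $\ol{S}'(11) \to \ol{T}$ is proper and an isomorphism over $\ol{S}(11)$, so $S'(11)$ belongs to $\Comp(S(11))$ and dominates $T$, which accounts for the domination allowed in the statement.

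It remains to put divisors on the vertices. I would define $S'(11)^\infty$ as the pullback of $T^\infty$ along $\ol{S}'(11) \to \ol{T}$, so that $S'(11) \to T$ is minimal and, because $T \in \Comp(S(11))$ and $\ol{S}'(11) \to \ol{T}$ is an isomorphism over $\ol{S}(11)$, so that $S'(11)$ again lies in $\Comp(S(11))$; then I would set each $S'(ij)^\infty$ to be the pullback of $S'(11)^\infty$, making every morphism of $S'$ minimal. Effectivity of these pullbacks and the admissibility inequalities for the (possibly boundary-ramified) ambient maps are checked using Lemma~\ref{lem:increasing} to guarantee that a high enough multiple of the compactifying divisor absorbs the boundary---enlarging $W$ if necessary---and Lemma~\ref{lKL} to compare divisors after pulling back along the surjections $\ol{S}'(ij) \to \ol{S}'(11)$. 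Finally, since every morphism of the original square $S$ is minimal and $S(11) \to T$ is minimal, these divisors restrict over $\ol{S}(11)$ to the original $S(ij)^\infty$; hence $S \to S'$ is a morphism in $(\ulMSm^\fin)^\Sq$ and is the desired partial compactification, with $S'(11) \in \Comp(S(11))$ dominating $T$ minimally.
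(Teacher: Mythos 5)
Your construction follows the same overall architecture as the paper's proof: compactify the \'etale leg $\ol{S}(01)$ over $\ol{T}$, identify the distinguished sheet $\ol{Z}_V$ and the extra sheets, remove closures of the extra material, pass to an \'etale locus, repair the situation by a blow-up of $\ol{T}$ with center in the boundary, and put minimal (pulled-back) divisors on everything. The final divisor step is essentially correct (in fact easier than you suggest: since every structure map of $S'$ is minimal by construction, admissibility is automatic and Lemmas \ref{lem:increasing} and \ref{lKL} are not needed there). The gap is concentrated in the middle step, and it is genuine.

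The problem is the tool you invoke to obtain the splitting along the new boundary. Lemma \ref{lem:b-up} requires the morphism $f:U\to X$ to be \emph{\'etale}, but your $\ol{Y}\to\ol{T}$ is only finite and generically \'etale; the points that cause trouble are precisely the points of $\ol{Z}_V$ at which $\ol{Y}\to\ol{T}$ is ramified or non-flat, i.e.\ points \emph{outside} any \'etale locus. If you instead apply the lemma to the \'etale locus $V_\et\to\ol{T}$, you have discarded those bad points for good: base-changing an \'etale morphism along any blow-up $\Bl_W(\ol{T})\to\ol{T}$ can never recover them, so the preimage of the strict transform of $\ol{Z}$ in $V_\et\times_{\ol{T}}\Bl_W(\ol{T})$ still fails to surject onto it (the strict transform of $\ol{Z}$ is proper and surjective over $\ol{Z}$, hence has points over the locus that $\ol{Z}_V\cap V_\et$ misses). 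Moreover, the conclusion of Lemma \ref{lem:b-up} --- that a base blow-up dominates a given modification of the source --- does not address ramification at all; ``separating the sheets'' by blowing up $\ol{Y}$ along intersections of sheet closures (which the paper also does) is necessary but not sufficient, since even disjoint sheets need not be flat, hence need not be \'etale, over the boundary. What is actually needed, and what the paper uses, is Raynaud--Gruson platification \cite[Th.~5.2.2]{rg}: one blows up $\ol{T}$ along a center $C\subset\ol{T}-\ol{S}(11)$ so that the strict transform of the compactified cover becomes \emph{flat} over $\Bl_C(\ol{T})$; then Theorem \ref{strong-lemma} (flat $+$ separated $+$ isomorphism over a schematically dense open $\Rightarrow$ open immersion) shows that the reduced preimage of $\ol{Z}$ in the flat locus is exactly the closure of the distinguished sheet and maps isomorphically onto $\ol{Z}$, and flat $+$ unramified then gives \'etaleness along that closure. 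None of these flatness arguments can be extracted from Lemma \ref{lem:b-up}, so as written your proof does not establish the Nisnevich condition $(\ol{S}'(01)-\ol{S}'(00))_\red\iso(\ol{S}'(11)-\ol{S}'(10))_\red$ along $\ol{Z}-Z$, nor even the weaker claim that $\ol{Z}_V$ stays inside $\ol{S}'(01)$.
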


The proof of Theorem \ref{ex-pc} will be given in the following subsections.

\begin{comment}
\begin{cor}\label{cor:pc-normal}
If $S$ is a normal $\ulMVfin$-square, then there exists a partial compactification $S \to S'$ as in Theorem \ref{ex-pc} such that $S'$ is normal.
\end{cor}

\begin{proof}
Take a partial compactification $S \to S'$.
Define $S'(11)^N := (\ol{S}'(11)^N , S'(11)^\infty \times_{\ol{S}'(11)} \ol{S}'(11)^N)$, where $\ol{S}'(11)^N$ denotes the normalization of $\ol{S}'(11)$.
For each $(ij) \neq (11)$, define $S'(ij)^N$ by  
\[
S'(ij)^N := S'(ij) \times_{S'(11)} S'(11)^N,
\]
where the right hand side denotes the fiber product in $\ulMSm^\fin$, which exists by the minimality of (one of) the projection maps ((\cite[Cor. 1.10.7]{modsheaf1})).
Then we obtain a square $S^{\prime N}$, which is an $\ulMVfin$-square as a base change.
Moreover, $S^{\prime N}$ is normal by construction. 
Since $S$ is normal by assumption, the morphism $S \to S'$ extends uniquely to $S \to S^{\prime N}$, and we have $(S \to S^{\prime N}) \in \Comp (S)$. This finishes the proof of Corollary \ref{cor:pc-normal}.
\end{proof}
\end{comment}

\subsection{The Zariski case}\label{subsection:Zariski}

Before going into the proof of the general case, we will describe the proof in the case that $S$ is a Zariski square, i.e., that $\ol{p}_S : \ol{S}(01) \to \ol{S}(11)$ is an open immersion. 
This subsection is only used in the sequel as a guide for the reader.

Take any object $(S(11) \to T) \in \Comp (S(11))$, and set $Z_1 := \ol{S}(11) - \ol{S}(10)$ and $Z_2 := \ol{S}(01) - \ol{S}(00)$.
Let $\ol{Z}_i$ be the closure of $Z_i$ in $\ol{T}$ for $i=1,2$.

\subsubsection{Special case}\label{case1} If $\ol{Z}_1 \cap \ol{Z}_2$ is empty, we set $\ol{S}'(10) := \ol{T} - \ol{Z}_1$, $\ol{S}'(01) := \ol{T} - \ol{Z}_2$ and $\ol{S}'(00) := \ol{S}'(10) \cap \ol{S}'(01)$.
Moreover, set $S'(11) := T$, $S'(ij)^\infty := S'(11)^\infty \cap \ol{S}'(ij)$ and $S'(ij):=(\ol{S}'(11),S'(11)^\infty)$ for $(ij) \neq (11)$.
Then we obtain a partial compactification $S \to S'$, where the maps $S(ij) \to S'(ij)$ are induced by natural open immersions. 

\subsubsection{General case}\label{case2} In general, let $\pi : \ol{T}_1 \to \ol{T}$ be the blow-up of $\ol{T}$ along $\ol{Z}_1 \times_{\ol{T}} \ol{Z}_2$.
Then the closure of $Z_1$ in $\ol{T}_1$ and the closure of $Z_2$ in $\ol{T}_1$ do not intersect. 
Therefore, by applying the above construction by replacing $T$ with $T_1 := (\ol{T}_1, \pi^\ast T^\infty )$, we obtain a partial compactification of $S$.

The general case of Theorem \ref{ex-pc} follows this strategy, with rather substantial complications.

\subsection{A general construction}\label{subsection:general-observation}

In this subsection, we make a preliminary construction for the proof of the general case. 
Set $Z_1 := \ol{S}(11) - \ol{S}(10)$ and $Z'_1 := \ol{S}(01) - \ol{S}(00)$. 
Since $\ol{S}$ is an elementary Nisnevich square, 
the natural morphism $Z'_1 \to Z_1$ is an isomorphism, and we have $Z'_1 \cong Z_1 \times_{\ol{S}(11)} \ol{S}(01)$.

Contrary to the Zariski case, we cannot regard $\ol{S}(01)$ and $\ol{S}(00)$ as open subsets of $\ol{T}$.
So, we take a compactification $\ol{S}(01) \to \ol{R}$ such that $\ol{p}_S : \ol{S}(01) \to \ol{S}(11)$ extends to a morphism $\ol{p} : \ol{R} \to \ol{T}$ of schemes over $k$
\footnote{For example, take a compactification $\ol{S}(01)\to \ol{R}_0$, and define $\ol{R}$ as the graph of the rational map $\ol{R}_0 \tto \ol{T}$.}, and set $R := (\ol{R},R^\infty) := (\ol{R},\ol{p}^\ast T^\infty)$.
Thus we obtain a minimal morphism $p : R \to T$.

In the Zariski case, we considered the closures of $Z_1$ and $Z_2$ in $\ol{T}$ and studied their intersection.
In the general case, we will consider closures in $\ol{R}$.

We need the following elementary observation. 
Consider the open subscheme $U:=\ol{p}^{-1} (\ol{S}(11))$ of $\ol{R}$.
Then we have the following commutative diagram
\[\xymatrix{
Z'_1 \ar[r] \ar[d]_{\wr} \ar@{}[rd]|\square & \ol{S}(01) \ar[r] \ar[d]_{\ol{p}_S} & U \ar[ld] \ar[r] & \ol{R} \ar[d]^{\ol{p}} \\
Z_1 \ar[r] & \ol{S}(11) \ar[rr] & & \ol{T},
}\]
where we regard $Z'_1$ and $Z_1$ as reduced closed subschemes. 

\begin{lemma}\label{lem:Z123}
\ 
\begin{enumerate}
\item
The inclusion $Z'_1 \subset U$ is a closed immersion. 

\item $Z'_1 = \ol{p}^{-1} (Z_1) \cap \ol{S}(01)$.

\item Regard
\[
\ol{p}^{-1} (Z_1) := Z_1 \times_{\ol{T}} \ol{R},
\]
as a closed subscheme of $U$.
Then there exists an open and closed subscheme $Z_3$ of $U$ such that $\ol{p}^{-1} (Z_1) = Z'_1 \sqcup Z_3$.

\item Set $Z_2 := U - (Z_3 \sqcup \ol{S}(01))$. Then $Z_2$ is a closed subset of $U$. We endow $Z_2$ with the reduced scheme structure. 

\item The closed subschemes $Z'_1, Z_2, Z_3$ of $U$ are disjoint from each other. 
\end{enumerate}
\end{lemma}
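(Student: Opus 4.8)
The plan is to prove each of the five assertions in turn, building on the commutative diagram with cartesian left square, and using the fact that $\ol{S}$ is an elementary Nisnevich square. Throughout, recall that $U = \ol{p}^{-1}(\ol{S}(11))$ contains $\ol{S}(01)$ as an open subscheme (since $\ol{p}_S$ factors through $U$ and $\ol{S}(01)\to U$ is an open immersion by construction of $\ol{R}$ as a compactification of $\ol{S}(01)$).

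For (i), I would observe that $Z'_1 = \ol{S}(01) - \ol{S}(00)$ is by definition a closed subset of $\ol{S}(01)$, which in turn is open in $U$; so $Z'_1$ is locally closed in $U$. To upgrade this to a genuine closed immersion, the key point is that $Z'_1 \cong Z_1\times_{\ol{S}(11)}\ol{S}(01)$ (the reduced preimage of the closed set $Z_1$) and that $Z_1$ is closed in $\ol{S}(11)$; pulling back along $U\to\ol{S}(11)$ shows $Z'_1$ is the intersection of $\ol{S}(01)$ with the closed subscheme $\ol{p}^{-1}(Z_1)$ of $U$, which is the content of (ii). So I would prove (ii) first and deduce (i): since the vertical map $Z'_1\to Z_1$ in the diagram is an isomorphism (elementary Nisnevich condition (4), applied with reduced structures) and the left square is cartesian, we get $Z'_1 = \ol{p}^{-1}(Z_1)\cap\ol{S}(01)$ set-theoretically; closedness of $Z'_1$ in $U$ then follows because $\ol{p}^{-1}(Z_1)$ is closed in $U$ and the complement of $\ol{S}(01)$ meets it only in points mapping outside $\ol{S}(11)$—which I must rule out using that $\ol{S}(01)\to\ol{S}(11)$ is étale and $U\to\ol{S}(11)$ agrees with it over $\ol{S}(01)$.

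The crux is (iii). Here I would apply Lemma \ref{lem:no-extra-fiber} to the étale (hence separated) morphism $\ol{S}(01)\to\ol{S}(11)$ restricted over the closed set $Z_1$: the elementary Nisnevich condition says $(V-W)_\red\to(X-U)_\red$ is an isomorphism, i.e. $Z'_1\to Z_1$ is proper (indeed an isomorphism) and its image is all of $Z_1$. Since $Z'_1$ is open in $\ol{p}^{-1}(Z_1)$ (being the trace of the open set $\ol{S}(01)$) and the étale map $Z'_1\to Z_1$ is an isomorphism, the standard splitting for étale morphisms—a section of an étale map is an open and closed immersion—shows that $Z'_1$ is also closed in $\ol{p}^{-1}(Z_1)$. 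Thus its complement $Z_3 := \ol{p}^{-1}(Z_1) - Z'_1$ is open and closed in $\ol{p}^{-1}(Z_1)$, and I would confirm it is open and closed in $U$ by noting $\ol{p}^{-1}(Z_1)$ itself is closed in $U$ while $Z'_1$ is closed in $U$ by (i). I expect this step—establishing that $Z'_1$ is \emph{closed} and not merely open in $\ol{p}^{-1}(Z_1)$—to be the main obstacle, since it relies on the interplay between the étale splitting over $Z_1$ and the separatedness controlling the behaviour over the boundary $\ol{T}-\ol{S}(11)$; this is precisely where Lemma \ref{lem:no-extra-fiber} must be invoked to guarantee no extra components of $\ol{p}^{-1}(Z_1)$ collide with $Z'_1$.

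For (iv), given (iii) the three pieces $Z_3$, $\ol{S}(01)$, and $Z_2$ partition $U$, and $Z_2 = U - (Z_3\sqcup\ol{S}(01))$ is closed because both $Z_3$ and $\ol{S}(01)$ are open in $U$ (the former by (iii), the latter by construction); I endow it with the reduced structure as stated. Finally, (v) is a disjointness check: $Z'_1\subset\ol{S}(01)$ while $Z_2\cap\ol{S}(01)=\emptyset$ by definition of $Z_2$, so $Z'_1\cap Z_2=\emptyset$; $Z'_1$ and $Z_3$ are disjoint since they are the two pieces of the decomposition in (iii); and $Z_2\cap Z_3=\emptyset$ again by the definition of $Z_2$ as the complement of $Z_3\sqcup\ol{S}(01)$. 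I would present (v) as a short formal consequence of the constructions in (iii) and (iv), requiring no further geometric input.
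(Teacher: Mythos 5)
Your skeleton---prove (2) from the cartesian square, deduce that $Z'_1$ is open in $\ol{p}^{-1}(Z_1)$, then observe that everything hinges on $Z'_1$ being \emph{closed} as well---matches the paper's, and your (iv), (v) are the same formal bookkeeping as the paper's. But both tools you invoke at the step you yourself flag as the crux fail, so there is a genuine gap. First, Lemma \ref{lem:no-extra-fiber} cannot be applied to $\ol{p}^{-1}(Z_1)\to Z_1$: it requires the open subset $Z'_1\subset\ol{p}^{-1}(Z_1)$ to be \emph{dense}, and its conclusion would read $\ol{p}^{-1}(Z_1)=Z'_1$, i.e.\ $Z_3=\emptyset$---which is exactly what is false in general (cf.\ Remark \ref{rem:Z3}; concretely, for the paper's own square $\ol{S}(11)=\A^1-\{0\}$, $\ol{S}(01)=\A^1-\{0,-1\}\xrightarrow{t\mapsto t^2}\ol{S}(11)$, with $\ol{T}=\P^1$, $\ol{R}=\P^1$, $\ol{p}(t)=t^2$, one gets $\ol{p}^{-1}(Z_1)=\{\pm 1\}$, $Z'_1=\{1\}$, $Z_3=\{-1\}\neq\emptyset$). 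Applying the lemma instead to ``$\ol{S}(01)\to\ol{S}(11)$ restricted over $Z_1$'' says nothing, since that restriction is just $Z'_1\to Z_1$ and does not see $\ol{R}-\ol{S}(01)$, where the whole problem lives. Second, the \'etale-splitting fact (``a section of an \'etale map is open and closed'') does not apply either, because the ambient morphism $\ol{p}^{-1}(Z_1)\to Z_1$ is \emph{not} \'etale: $\ol{p}:\ol{R}\to\ol{T}$ is an arbitrary extension of $\ol{p}_S$ to a compactification, and nothing is known about it at the points of $Z_3$, which are precisely the points at issue; only its restriction over $\ol{S}(01)$ is \'etale, and the openness of $Z'_1$ in $\ol{p}^{-1}(Z_1)$ already comes from (2), not from any splitting. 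Relatedly, your gloss on (i)---that the complement of $\ol{S}(01)$ meets $\ol{p}^{-1}(Z_1)$ only in points ``mapping outside $\ol{S}(11)$'', to be ``ruled out''---rests on a false picture: every point of $U$ maps into $\ol{S}(11)$ by definition of $U$, the offending points are exactly $Z_3$, and they cannot be ruled out; what must be shown is that the closure of $Z'_1$ avoids them.

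The argument that actually closes the gap (and is the paper's proof of (1)) uses only properness and separatedness, with no \'etaleness: $Z'_1\to Z_1$ is an isomorphism and $Z_1\hookrightarrow\ol{S}(11)$ is a closed immersion, so the composite $Z'_1\to\ol{S}(11)$ is proper; this composite factors through $U\to\ol{S}(11)$, which is separated (as $U$ is open in the separated scheme $\ol{R}$); by the cancellation property for proper morphisms, $Z'_1\to U$ is proper, and a proper immersion is a closed immersion. Equivalently, the inverse of $Z'_1\iso Z_1$ is a section of the \emph{separated} morphism $\ol{p}^{-1}(Z_1)\to Z_1$, and a section of a separated morphism is a closed immersion. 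With this substitution at the crux, the rest of your outline goes through as in the paper.
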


\begin{proof}
We prove (1).
The composite $Z'_1 \to Z_1 \to \ol{S}(11)$ is a proper morphism since $Z'_1 \to Z_1$ is an isomorphism. 
Since it factors through $\ol{U}$ and since $\ol{U} \to \ol{T}$ is separated, we conclude that $Z'_1 \to \ol{U}$ is proper, hence a closed immersion. 

(2) follows from the isomorphism $Z'_1 \cong Z_1 \times_{\ol{S}(11)} \ol{S}(01)$.
 
We prove (3).
(1) implies that $Z'_1$ is a closed subscheme of $\ol{p}^{-1} (Z_1)$.
On the other hand, $Z'_1$ is open also in $\ol{p}^{-1} (Z_1)$ by (2). 
Therefore, taking $Z_3 := \ol{p}^{-1} (Z_1) - Z'_1$, we finish the proof.

(4) immediately follows from (3). 

(5) By construction, we have $U-\ol{S}(01) = Z_2 \sqcup Z_3$ and $Z'_1 \subset \ol{S}(01)$.
This finishes the proof.
\end{proof}

\begin{remark}\label{rem:Z3}
In the Zariski case, we have $Z_3 = \emptyset$. 
\end{remark}

Let $\ol{Z}_1$ be the closure of $Z_1$ in $\ol{T}$.  
Moreover, let $\ol{Z}'_1$, $\ol{Z}_2$, $\ol{Z}_3$ be the closures of $Z'_1, Z_2, Z_3$ in $\ol{R}$, endowed with their reduced scheme structures.  

\begin{lemma}\label{lem:cleanness-V}
Set $V:=\ol{R} - (\ol{Z}_2 \cup \ol{Z}_3)$.
Then we have
\begin{enumerate}
\item $U\cap V = \ol{S}(01)$;
\item $q^{-1}(\ol{S}(11)) = \ol{S}(01)$,  where $q$ is the composite $V \to \ol{R} \xrightarrow{\ol{p}} \ol{T}$.
\end{enumerate}
\end{lemma}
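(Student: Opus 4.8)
The plan is to derive both assertions from the set-theoretic decomposition of $U$ recorded in Lemma \ref{lem:Z123}, together with one elementary fact about closures relative to an open subset. Everything is topological: since $U$, $V$ and $\ol{S}(01)$ are all open subschemes of $\ol{R}$, an equality of their underlying sets automatically upgrades to an equality of open subschemes, so there is no scheme-theoretic bookkeeping to do.

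First I would isolate the following general observation: if $Z$ is a subset closed in the open subscheme $U \subset \ol{R}$, and $\ol{Z}$ denotes its closure in $\ol{R}$, then $\ol{Z} \cap U = Z$, because $\ol{Z} \cap U$ is exactly the closure of $Z$ taken within the subspace $U$, and $Z$ is already closed there. By Lemma \ref{lem:Z123}(4)--(5), both $Z_2$ and $Z_3$ are closed in $U$, so this gives $\ol{Z}_2 \cap U = Z_2$ and $\ol{Z}_3 \cap U = Z_3$. For part (1) I would then simply compute
\[
U \cap V = U - (\ol{Z}_2 \cup \ol{Z}_3) = U - (Z_2 \cup Z_3) = \ol{S}(01),
\]
where the last equality is the partition $U = \ol{S}(01) \sqcup Z_2 \sqcup Z_3$ extracted from the proof of Lemma \ref{lem:Z123}(5). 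Here one uses that $\ol{S}(01) \subset U$, which holds because $\ol{p}$ restricts to $\ol{p}_S$ on $\ol{S}(01)$ and therefore carries $\ol{S}(01)$ into $\ol{S}(11)$.

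For part (2), since $q = \ol{p}|_V$ and $U = \ol{p}^{-1}(\ol{S}(11))$ by definition, I would write
\[
q^{-1}(\ol{S}(11)) = V \cap \ol{p}^{-1}(\ol{S}(11)) = V \cap U = \ol{S}(01),
\]
the last step being exactly part (1). Thus (2) is a formal consequence of (1) and the definitions.

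There is no substantial obstacle here; the lemma is essentially bookkeeping. The only point requiring genuine care is the closure identity $\ol{Z}_i \cap U = Z_i$, where it is essential that $Z_i$ is closed \emph{in} $U$ rather than merely locally closed in $\ol{R}$: the closures $\ol{Z}_i$ taken in $\ol{R}$ can be strictly larger, and it is precisely the intersection with $U$ that recovers $Z_i$. Keeping track of ``closed in $U$'' versus ``closed in $\ol{R}$'' throughout is the one place where a careless argument could go wrong.
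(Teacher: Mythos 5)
Your proof is correct and is essentially the paper's own argument: both rest on the partition $U = \ol{S}(01) \sqcup Z_2 \sqcup Z_3$ supplied by Lemma \ref{lem:Z123} together with elementary topology of closures, and both deduce (2) from (1) via the identity $q^{-1}(\ol{S}(11)) = U \cap V$. The only difference is cosmetic: you package the closure step as the subspace identity $\ol{Z}_i \cap U = Z_i$ (valid since $Z_i$ is closed in $U$), whereas the paper observes that $Z_2 \sqcup Z_3$ lies in the closed set $\ol{R} - \ol{S}(01)$, hence so does $\ol{Z}_2 \cup \ol{Z}_3$; these are interchangeable one-line facts.
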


\begin{proof}
We have $Z_2\sqcup Z_3=U-\ol{S}(01)\subset \ol{R}-\ol{S}(01)$ and $\ol{R}-\ol{S}(01)$ is closed in $\ol{R}$, hence $\ol{Z}_2\cup \ol{Z}_3\subset \ol{R}-\ol{S}(01)$, so $
\ol{S}(01)\subset V$ and $U\cap V\supseteq \ol{S}(01)$. But $U-\ol{S}(01)=Z_2\sqcup Z_3$ and $V\cap (Z_2\sqcup Z_3)=\emptyset$, hence we have equality in (1).
Finally, $q^{-1}(\ol{S}(11)) =U\cap V$ so (1) $\iff$ (2).
\end{proof}

\subsection{Proof of Theorem \protect\ref{ex-pc} in a special case}\label{subsection:pc-special} In the Zariski case, this subsection reduces to \S\ \ref{case1} (see Remark \ref{rem:Zariski} below).

Let $S$, $(S(11) \to T) \in \Comp (S(11))$ and $\ol{S}(01) \to \ol{R}$ be as in the previous subsection. 
Moreover, define $Z_1, Z'_1, Z_2 , Z_3$ and $\ol{Z}_1, \ol{Z}'_1$, $\ol{Z}_2$, $\ol{Z}_3$ in the same way as before (see Lemma \ref{lem:Z123}). 

In this subsection, we assume the following condition on $T$ and $\ol{R}$.
\begin{itemize}
\item[$(\ast)_{T,\ol{R}}$] \ Let $V,q$ be as in Lemma \ref{lem:cleanness-V}.
Let $V_\flat \subset V$ be the flat locus of the composite $q : V \subset \ol{R} \xrightarrow{\ol{p}} \ol{T}$. Then $V_\flat$ contains $\ol{Z}'_1$.
\end{itemize}

\begin{remark}\label{rem:Zariski}
Assume that $S$ is a Zariski square, i.e., that $\ol{p}_S$ is an open immersion, and take $\ol{R}$ to be $\ol{T}$.
Then the condition $(\ast)_{T,\ol{T}}$ is equivalent to that $\ol{Z}_1 \cap \ol{Z}_2 = \emptyset$.
Indeed, by Remark \ref{rem:Z3}, we have $V_\flat = V = \ol{T} - \ol{Z}_2$.
Moreover, we have $\ol{Z}_1 = \ol{Z}'_1$.
Therefore $\ol{Z}'_1 \subset V_\flat \iff \ol{Z}_1 \cap \ol{Z}_2 = \emptyset$.
\end{remark}

The general case will be treated in the next subsection.

Let $j : V_\et \subset V$ be the \'etale locus of $q$.
Define 
\begin{align*}
S'(11) &:= T, \\
S'(10) &:= (\ol{S}'(10), S'(10)^\infty ) := (\ol{T} - \ol{Z}_1, T^\infty \cap (\ol{T} - \ol{Z}_1)), \\ 
S'(01) &:= (\ol{S}'(01), S'(01)^\infty ) = (V_\et , j^\ast q^\ast T^\infty ).
\end{align*}
Then the open immersion $\ol{T} - \ol{Z}_1 \to \ol{T}$ and the morphism $q \circ j : V_\et \to \ol{T}$ induce minimal morphisms $S'(10) \to S'(11)$ and $S'(01) \to S'(11)$.

Set $S'(00) := S'(10) \times_{S'(11)} S'(01)$ be the fiber product in $\ulMSm^\fin$, which exists by the minimality of (one of) the projections \cite[Cor. 1.10.7]{modsheaf1}. 
In our situation, we have 
\[
S'(00) = (\ol{S}'(10) \times_{\ol{S}'(11)} \ol{S}'(01), \text{the pullback of $S'(11)^\infty$}).
\]
Thus, we obtain a pull-back diagram 
\[S': 
\begin{gathered}\xymatrix{
S'(00) \ar[r] \ar[d] & S'(01) \ar[d] \\
S'(10) \ar[r] & S'(11)
}\end{gathered}\]
in $\ulMSm^\fin$.
By construction, for each $(ij) \in \Sq$, we have $\ol{S}(ij) \subset \ol{S}'(ij)$. 
Moreover, the open immersions induce minimal morphisms $S(ij) \to S'(ij)$. 
Therefore, we obtain a morphism $S \to S'$ in $(\ulMSm^\fin )^\Sq$.

\begin{prop}\label{prop:easy-pc}
The morphism $S \to S'$ is a partial compactification of $S$.
\end{prop}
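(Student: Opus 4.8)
The plan is to verify, one by one, the four conditions (a)--(d) of Definition \ref{def:partial-comp} for the morphism $S\to S'$ just constructed. Conditions (b), (c), (d) and most of (a) are formal; the whole difficulty is concentrated in the last clause of the elementary Nisnevich square axiom for $\ol{S}'$, and this is where the hypothesis $(\ast)_{T,\ol{R}}$ must be used.

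First, (b) is immediate since $S'(11)=T$ and $(S(11)\to T)\in\Comp(S(11))$ by hypothesis. For (c) and (d) simultaneously, I would compute the fibre product $\ol{S}'(ij)\times_{\ol{T}}\ol{S}(11)$ for each $(ij)$: it is $\ol{S}(11)$ for $(11)$; it is $(\ol{T}-\ol{Z}_1)\cap\ol{S}(11)=\ol{S}(11)-Z_1=\ol{S}(10)$ for $(10)$, using that $\ol{Z}_1\cap\ol{S}(11)=Z_1$ (taking the closure adds no point inside $\ol{S}(11)$); it is $V_\et\cap q^{-1}(\ol{S}(11))=\ol{S}(01)$ for $(01)$, by Lemma \ref{lem:cleanness-V}(2) together with the inclusion $\ol{S}(01)\subset V_\et$ (which holds because $q|_{\ol{S}(01)}$ is $\ol{p}_S$ followed by the open immersion $\ol{S}(11)\hookrightarrow\ol{T}$, and $\ol{p}_S$ is \'etale); and it is $\ol{S}(00)$ for $(00)$ by compatibility of fibre products. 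Each such identification simultaneously exhibits $\ol{S}(ij)$ as an open subscheme of $\ol{S}'(ij)$, which is (c), and is precisely the cartesianity asserted in (d).

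Next I would treat the formal part of (a), namely that $S'$ is an $\ulMVfin$-square. Minimality of all four morphisms holds by construction: the two maps to $S'(11)$ carry pullback divisors, and the two projections out of $S'(00)$ are minimal because $S'(00)$ is the fibre product in $\ulMSm^\fin$ (\cite[Cor. 1.10.7]{modsheaf1}). As for the elementary Nisnevich square axioms on $\ol{S}'$: cartesianity is the definition of $\ol{S}'(00)$; the horizontal maps are open immersions, since $\ol{S}'(10)=\ol{T}-\ol{Z}_1\hookrightarrow\ol{T}$ and $\ol{S}'(00)\to\ol{S}'(01)$ is its base change; and the vertical maps are \'etale, since $\ol{S}'(01)=V_\et\to\ol{T}$ is \'etale by construction and $\ol{S}'(00)\to\ol{S}'(10)$ is its base change.

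The crux is the remaining axiom: $(\ol{S}'(01)-\ol{S}'(00))_\red\to(\ol{S}'(11)-\ol{S}'(10))_\red$ is an isomorphism. A direct computation gives $\ol{S}'(11)-\ol{S}'(10)=\ol{Z}_1$ and $\ol{S}'(01)-\ol{S}'(00)=W:=(q|_{V_\et})^{-1}(\ol{Z}_1)$, so I must show that the \'etale map $f:W\to\ol{Z}_1$ (base change of $q|_{V_\et}$ along $\ol{Z}_1\hookrightarrow\ol{T}$) is an isomorphism after reduction. Over the dense open $Z_1\subset\ol{Z}_1$ one has $f^{-1}(Z_1)=Z'_1\xrightarrow{\ \sim\ }Z_1$ by Lemma \ref{lem:Z123}; and since \'etale maps are flat, every irreducible component of $W$ dominates a component of $\ol{Z}_1$, so $Z'_1$ is dense in $W$ and $W_\red=\ol{Z}'_1\cap V_\et$. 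This is exactly where $(\ast)_{T,\ol{R}}$ enters: it places $\ol{Z}'_1$ in the flat locus $V_\flat\subset V$, and, $\ol{Z}'_1$ being closed in the proper (over $k$) scheme $\ol{R}$, the induced map $\ol{Z}'_1\to\ol{Z}_1$ is proper, hence closed with dense image $Z_1$, hence surjective. One then checks $f$ is radicial (the diagonal of $W$ over $\ol{Z}_1$ is open and closed, and is everything because $Z'_1$ is dense), so the separated \'etale $f$ is an open immersion; combining this with the properness and surjectivity just obtained (equivalently, applying Lemma \ref{lem:no-extra-fiber}) forces $f$ to be surjective, whence an isomorphism onto $(\ol{Z}_1)_\red$. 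The main obstacle is precisely this last reconciliation along the boundary $\ol{Z}_1-Z_1$: the square is built from the \'etale locus $V_\et$, whereas $(\ast)_{T,\ol{R}}$ only controls the a priori larger flat locus $V_\flat$, so the delicate point is to upgrade the flatness/properness information along $\ol{Z}'_1$ into the \'etale statement that $W_\red\to(\ol{Z}_1)_\red$ is an isomorphism.
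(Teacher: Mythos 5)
Your verification of conditions (b), (c), (d) and of the formal parts of (a) is correct and matches the paper's own argument: the fibre-product computations, the identification $\ol{S}'(11)-\ol{S}'(10)=\ol{Z}_1$ and $(\ol{S}'(01)-\ol{S}'(00))_\red = W_\red=\ol{Z}'_1\cap V_\et$ with $W=(q|_{V_\et})^{-1}(\ol{Z}_1)$, and the diagonal argument showing that the separated \'etale morphism $f\colon W\to\ol{Z}_1$ is an open immersion are all fine (the last is a legitimate elementary substitute, \emph{in the \'etale case}, for the paper's appeal to Theorem \ref{strong-lemma}).

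The gap is the final step. From the open immersion property of $f$ together with the properness and surjectivity of $\ol{Z}'_1\to\ol{Z}_1$ you conclude that $f$ is surjective; but the source of $f$ is only $\ol{Z}'_1\cap V_\et$, and nothing you have proved shows $\ol{Z}'_1\subset V_\et$. Condition $(\ast)_{T,\ol{R}}$ gives only $\ol{Z}'_1\subset V_\flat$; a priori a point of $\ol{Z}_1-Z_1$ could have all of its $\ol{Z}'_1$-preimages inside $\ol{Z}'_1\setminus V_\et$, in which case $f$ would be a non-surjective open immersion and the Nisnevich axiom would fail. (The parenthetical appeal to Lemma \ref{lem:no-extra-fiber} does not close this either: applied with $U=Z'_1$ it only controls fibres over $f(U)=Z_1$, not over the boundary $\ol{Z}_1-Z_1$.) The inclusion $\ol{Z}'_1\subset V_\et$ is exactly the paper's key Claim, and proving it is where the flat locus must genuinely be used: the paper applies Theorem \ref{strong-lemma} to the \emph{flat} morphism $q_{\flat,Z}\colon q_\flat^{-1}(\ol{Z}_1)\to\ol{Z}_1$, which is an isomorphism over the dense open $Z_1$, to get an open immersion --- note that your diagonal trick is unavailable here, since the diagonal of a flat separated morphism is closed but not open; then, since $(\ast)_{T,\ol{R}}$ places $\ol{Z}'_1$ inside $q_\flat^{-1}(\ol{Z}_1)$ and $\ol{Z}'_1\to\ol{Z}_1$ is proper and dominant, hence surjective, this open immersion is surjective, hence an isomorphism; consequently the fibre of $q_\flat$ through any point of $\ol{Z}'_1$ is a single reduced point, so $q_\flat$ is unramified, hence \'etale, at those points by \cite[Th. 17.6.1]{EGA4-4}. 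Only after this does one know $W_\red=\ol{Z}'_1$, and only then does your surjectivity argument go through. In short, your proof is incomplete precisely at the point you yourself flag as ``the delicate point'': the upgrade from flatness along $\ol{Z}'_1$ to \'etaleness is asserted, not proved.
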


We need the following two lemmas for the proof.

\begin{lemma}\label{claim:generic-iso}
In the factorization 
\begin{equation}\label{toto}
Z'_1 \to q_\flat^{-1} (Z_1)\to Z_1
\end{equation}
both morphisms are isomorphisms. 
\end{lemma}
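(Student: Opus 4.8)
The plan is to identify $q_\flat^{-1}(Z_1)$ with $Z'_1$ \emph{on the nose}, rather than proving the two maps of \eqref{toto} to be isomorphisms separately. The key observation is that the composite $Z'_1 \to q_\flat^{-1}(Z_1) \to Z_1$ is exactly the canonical isomorphism $Z'_1 \iso Z_1$ coming from the elementary Nisnevich square $\ol{S}$: it is the restriction of $\ol{p}_S$, which is what the universal property of the fibre product $q_\flat^{-1}(Z_1) = Z_1 \times_{\ol{T}} V_\flat$ produces. Hence it suffices to show that the first map $Z'_1 \to q_\flat^{-1}(Z_1)$ is an isomorphism, and the second will follow automatically. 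Note that this first map is well defined precisely because of $(\ast)_{T,\ol{R}}$: it gives $Z'_1 \subset \ol{Z}'_1 \subset V_\flat$, so the inclusion $Z'_1 \hookrightarrow V_\flat$ together with $\ol{p}_S|_{Z'_1} : Z'_1 \to Z_1$ induces a morphism to $Z_1 \times_{\ol{T}} V_\flat$.

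The heart of the argument is a base-change computation localised on the open subscheme $W := \ol{S}(01) \cap V_\flat$ of $V_\flat$ (open because $\ol{S}(01)$ is a dense open of $\ol{R}$). Set-theoretically $q_\flat^{-1}(Z_1) \subset q_\flat^{-1}(\ol{S}(11))$, and by Lemma \ref{lem:cleanness-V}(2) we have $q^{-1}(\ol{S}(11)) = \ol{S}(01)$, whence $q_\flat^{-1}(\ol{S}(11)) = W$. Thus the support of the closed subscheme $q_\flat^{-1}(Z_1)$ lies in $W$, so that $q_\flat^{-1}(Z_1) = q_\flat^{-1}(Z_1) \times_{V_\flat} W$. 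Over $W$ the morphism $q_\flat$ factors as $W \hookrightarrow \ol{S}(01) \xrightarrow{\ol{p}_S} \ol{S}(11) \hookrightarrow \ol{T}$, so the base change may be computed over $\ol{S}(11)$: using the identification $Z'_1 \cong Z_1 \times_{\ol{S}(11)} \ol{S}(01)$ of Lemma \ref{lem:Z123} one gets $q_\flat^{-1}(Z_1) \times_{V_\flat} W = Z_1 \times_{\ol{S}(11)} W = Z'_1 \times_{\ol{S}(01)} W = Z'_1 \cap W$. Since $Z'_1 \subset \ol{S}(01)$ and $Z'_1 \subset V_\flat$, we have $Z'_1 \subset W$, hence $Z'_1 \cap W = Z'_1$. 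Combining, $q_\flat^{-1}(Z_1) = Z'_1$ as closed subschemes of $V_\flat$, and the first map of \eqref{toto} is the identity.

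The main point to be careful about is the bookkeeping of scheme structures: because $(\ast)_{T,\ol{R}}$ only guarantees $\ol{Z}'_1 \subset V_\flat$ and \emph{not} $\ol{S}(01) \subset V_\flat$, one cannot base-change directly along $\ol{S}(01) \to V_\flat$, and must instead pass to the open $W$ and use the set-theoretic containment of the support to transport the equality $q_\flat^{-1}(Z_1) = Z'_1$ valid over $W$ back to an equality over all of $V_\flat$. Reducedness causes no trouble, since $\ol{p}_S$ is \'etale and $Z_1$ is reduced, so every fibre product in sight is reduced and agrees with the reduced structures of Lemma \ref{lem:Z123}; in fact, beyond the inclusion $Z'_1 \subset V_\flat$ supplied by $(\ast)_{T,\ol{R}}$, the flatness of $q_\flat$ is not needed for this particular statement.
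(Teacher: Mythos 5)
Your proof is correct, and at bottom it is the same base-change argument as the paper's: both proofs combine Lemma \ref{lem:cleanness-V}~(2) with the cartesianness $Z'_1 \cong Z_1 \times_{\ol{S}(11)} \ol{S}(01)$ coming from the elementary Nisnevich square to show that the first map of \eqref{toto} is an isomorphism, and then deduce the second from the fact that the composite is the canonical isomorphism $Z'_1 \iso Z_1$. The one point where you diverge is your caveat that one cannot base-change along $\ol{S}(01) \to V_\flat$ because $(\ast)_{T,\ol{R}}$ only supplies $\ol{Z}'_1 \subset V_\flat$. In fact the stronger inclusion $\ol{S}(01) \subset V_\flat$ holds automatically, and it is exactly what the paper uses: $\ol{S}(01) = U \cap V$ is open in $V$, and by Lemma \ref{lem:cleanness-V}~(2) the restriction of $q$ to $\ol{S}(01)$ is the \'etale map $\ol{p}_S$ followed by the open immersion $\ol{S}(11) \subset \ol{T}$, hence flat, so every point of $\ol{S}(01)$ lies in the flat locus. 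Consequently your open set $W = \ol{S}(01) \cap V_\flat$ is just $\ol{S}(01)$, and the support-transport step is valid but superfluous; the paper's route (paste the two cartesian squares over $V$, then restrict to $V_\flat$ via $\ol{S}(01) \subset V_\flat$) is the streamlined version of the same computation. Your closing observations are sound: reducedness is preserved under the \'etale base change, and indeed neither the flatness of $q_\flat$ nor even condition $(\ast)_{T,\ol{R}}$ is genuinely needed for this particular lemma (flatness enters only in Lemma \ref{lem:iso-ZZ'}, via Theorem \ref{strong-lemma}).
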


\begin{proof} 
We have the following commutative diagram:
\[\xymatrix{
Z'_1 \ar[r] \ar[d]_{\wr} \ar@{}[rd]|\square & \ol{S}(01) \ar[r] \ar[d] \ar@{}[rd]|\square &  V \ar[d]^{q} \\
Z_1 \ar[r] & \ol{S}(11) \ar[r] & \ol{T},
}\]
where the left square is cartesian since $S$ is an $\ulMVfin$-square, and the right square is also cartesian thanks to Lemma \ref{lem:cleanness-V}. 
Since $ \ol{S}(01) \subset V_\flat$, this implies that the following commutative square
\[\xymatrix{
Z'_1 \ar[r] \ar[d]_{\wr}  & V_\flat \ar[d]^{q_\flat} \\
Z_1 \ar[r] & \ol{T}
}\]
 is  also cartesian. So the first morphism of \eqref{toto} is an isomorphism, and hence so is the second one. 
This concludes the proof.
\end{proof}

For the next lemma, recall that we have $\ol{Z}'_1 \subset V_\flat$ by assumption.
From now on, we regard $\ol{Z}'_1$ and $\ol{Z}_1$ as reduced closed subschemes. 

\begin{lemma}\label{lem:iso-ZZ'}
The morphism $\ol{Z}'_1 \to \ol{Z}_1$
 is an isomorphism.
Moreover, the induced morphism $\ol{Z}'_1 \to q_\flat^{-1} (\ol{Z}_1) := \ol{Z}_1 \times_{\ol{T}} V_\flat$ is an isomorphism. 
\end{lemma}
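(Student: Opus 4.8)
The plan is to prove the two assertions together, but in the reverse order: first I would identify $\ol{Z}'_1$ with the scheme-theoretic preimage $q_\flat^{-1}(\ol{Z}_1) = \ol{Z}_1 \times_{\ol{T}} V_\flat$, and then show that the projection $q_\flat^{-1}(\ol{Z}_1) \to \ol{Z}_1$ is an isomorphism. The first identification is the ``Moreover'' part of the statement, and the second step yields the first isomorphism, since the canonical morphism $\ol{Z}'_1 \to \ol{Z}_1$ factors as $\ol{Z}'_1 \iso q_\flat^{-1}(\ol{Z}_1) \to \ol{Z}_1$. Throughout I use that $q_\flat : V_\flat \to \ol{T}$ is flat, and that by Lemma \ref{claim:generic-iso} the isomorphism $Z'_1 \cong q_\flat^{-1}(Z_1)$ realizes $Z'_1$ as an open subscheme of $q_\flat^{-1}(\ol{Z}_1)$ mapping isomorphically onto the dense open $Z_1 \subset \ol{Z}_1$.

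For the identification $\ol{Z}'_1 = q_\flat^{-1}(\ol{Z}_1)$, I would argue as follows. Since $\ol{Z}_1$ is reduced, its associated points are the generic points of its irreducible components, all of which lie in $Z_1$. Flatness of the base change $q_\flat^{-1}(\ol{Z}_1) \to \ol{Z}_1$ then shows, via the description of associated points under a flat morphism, that every associated point of $q_\flat^{-1}(\ol{Z}_1)$ lies over $Z_1$, hence in $Z'_1$. Therefore $Z'_1$ is schematically dense in $q_\flat^{-1}(\ol{Z}_1)$; as $Z'_1 \cong Z_1$ is reduced, a nilpotent section would restrict to $0$ on $Z'_1$ and hence vanish, so $q_\flat^{-1}(\ol{Z}_1)$ is itself reduced. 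Being a reduced closed subscheme of $V_\flat$ in which $Z'_1$ is dense, it coincides with the reduced closure of $Z'_1$ in $V_\flat$; and since $\ol{Z}'_1 \subset V_\flat$ by the hypothesis $(\ast)_{T,\ol{R}}$, that reduced closure is exactly $\ol{Z}'_1$. This gives $\ol{Z}'_1 \iso q_\flat^{-1}(\ol{Z}_1)$.

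It then remains to prove that $\pi : \ol{Z}'_1 = q_\flat^{-1}(\ol{Z}_1) \to \ol{Z}_1$ is an isomorphism. It is proper, because $\ol{Z}'_1$ is closed in $\ol{R}$, which is proper over $k$, while $\ol{Z}_1$ is separated over $k$; it is flat, being a base change of $q_\flat$; and it restricts to the isomorphism $Z'_1 \iso Z_1$ over the dense open $Z_1$. I would first show $\pi$ is quasi-finite. The isomorphism $Z'_1 \iso Z_1$ induces a bijection between the irreducible components of $\ol{Z}'_1$ and those of $\ol{Z}_1$, under which corresponding components have equal dimension and $\pi$ restricts to a proper birational surjection. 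Combining this with the local dimension formula for flat morphisms, $\dim \mathcal{O}_{\ol{Z}'_1,x} = \dim \mathcal{O}_{\ol{Z}_1,\pi(x)} + \dim \mathcal{O}_{\pi^{-1}(\pi(x)),x}$, and the catenarity of finite-type $k$-schemes, one sees that every fibre of $\pi$ is zero-dimensional. Thus $\pi$ is quasi-finite, and being proper it is finite, so $\pi_*\mathcal{O}_{\ol{Z}'_1}$ is locally free; its rank is one over the dense open $Z_1$, hence one on each connected component of $\ol{Z}_1$ (each meeting $Z_1$). Finally, a rank-one locally free $\mathcal{O}_{\ol{Z}_1}$-algebra over the reduced scheme $\ol{Z}_1$ has invertible structure map (the unit is a nonzerodivisor that generates), so $\pi$ is an isomorphism.

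The main obstacle is the quasi-finiteness step: one must exclude positive-dimensional fibres of $\pi$ away from $Z_1$, that is, rule out the ``blow-up-like'' contractions that a merely proper birational morphism could a priori possess. This is exactly where the flatness hypothesis $(\ast)_{T,\ol{R}}$, guaranteeing $\ol{Z}'_1 \subset V_\flat$, is indispensable: it is the local dimension formula for flat morphisms, together with the matching of dimensions of corresponding irreducible components, that forces all fibres to be finite. The remaining passages — schematic density via associated points, the resulting reducedness, and the rank-one algebra argument — are comparatively formal.
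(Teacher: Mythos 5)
Your proof is correct, but it follows a genuinely different route from the paper's. The paper argues in the stated order: $q_{\flat,Z}\colon q_\flat^{-1}(\ol{Z}_1)\to\ol{Z}_1$ is flat (base change of $q_\flat$) and an isomorphism over the dense open $Z_1$ by Lemma \ref{claim:generic-iso}, hence an \emph{open immersion} by Theorem \ref{strong-lemma} (the appendix criterion, which the paper includes precisely for this step); since $\ol{Z}'_1\to\ol{Z}_1$ is proper and dominant, hence surjective, and factors through $q_{\flat,Z}$, this open immersion is surjective, hence an isomorphism; finally $\ol{Z}'_1\to q_\flat^{-1}(\ol{Z}_1)$ is a surjective closed immersion between reduced schemes, hence an isomorphism. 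You prove the ``Moreover'' part first, by a different mechanism (associated points of a flat morphism lie over associated points of the base, giving schematic density of $Z'_1$ in $q_\flat^{-1}(\ol{Z}_1)$ and hence its reducedness), and then replace the appendix criterion by an inline argument: quasi-finiteness via dimension theory, proper plus quasi-finite equals finite, and the rank-one locally free algebra trick. In effect you re-prove the special case of Theorem \ref{strong-lemma} needed here, exploiting properness (which the appendix does not assume) to bypass its \'etale-localization step; note that the appendix proof also bottoms out in your final step (finite flat of degree $1$ over a dense open is an isomorphism). The paper's route buys brevity and modularity; yours buys independence from Appendix B. The one compressed step in your sketch is quasi-finiteness; to make it airtight, take a closed point $x$ of the fibre over $s$: then $\kappa(x)/\kappa(s)$ is finite, so $\dim\overline{\{x\}}=\dim\overline{\{s\}}$, and every irreducible component of $\ol{Z}'_1$ through $x$ maps into a component of $\ol{Z}_1$ of the same dimension through $s$, whence $\dim\mathcal{O}_{\ol{Z}'_1,x}\le\dim\mathcal{O}_{\ol{Z}_1,s}$ and your flat local dimension formula forces $\dim\mathcal{O}_{\pi^{-1}(s),x}=0$. (Also, reducedness of $\ol{Z}_1$ is not needed in your last step: any $\mathcal{O}$-algebra that is locally free of rank one as a module has invertible structure map, since the unit is automatically a unit multiple of a local basis.)
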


\begin{proof}
Let $q_{\flat ,Z}: q_\flat^{-1} (\ol{Z}_1) \to \ol{Z}_1$ be the base change of $q : V_\flat \to \ol{T}$ by the closed immersion $\ol{Z}_1 \subset \ol{T}$.
Then we obtain the following commutative diagram.

\[\xymatrix{
Z'_1 \ar[r] \ar[d] & \ol{Z}'_1 \ar[r] \ar[d] & q_\flat^{-1} (\ol{Z}_1) \ar[r] \ar[ld]^{q_{\flat,Z}} & V_\flat \ar[r] \ar[rd]_{q_\flat} &  V \ar[d]^{q} \\
Z_1 \ar[r] & \ol{Z}_1 \ar[rrr] &&& \ol{T}.
}\]

\begin{claim}\label{claim:3.4.5}
 The morphism $q_{\flat,Z}$ is an isomorphism.
\end{claim}

\begin{proof}
Since $q_\flat : V_\flat \to \ol{T}$ is flat, so is $q_{\flat,Z}$ by base change.
Moreover, $q_{\flat ,Z}$ is an isomorphism over the dense open subset $Z_1 \subset \ol{Z}'_1$ by Lemma \ref{claim:generic-iso}.
Therefore, $q_{\flat ,Z}$ is an open immersion by Theorem \ref{strong-lemma}.

On the other hand, note that the morphism $\ol{Z}'_1 \to \ol{Z}_1$ decomposes as $\ol{Z}'_1 \subset q_{\flat ,Z}^{-1} (\ol{Z}_1) \xrightarrow{q_{\flat, Z}} \ol{Z}_1$.
Since $\ol{Z}'_1 \to \ol{Z}_1$ is dominant and proper, it is surjective. 
Therefore, the open immersion $q_{\flat ,Z}$ is indeed an isomorphism.
This finishes the proof of Claim \ref{claim:3.4.5}.
\end{proof}

Note that $\ol{Z}'_1 \to \ol{Z}_1$ is surjective. 
By Claim \ref{claim:3.4.5}, this implies that the closed immersion $\ol{Z}'_1 \to q_\flat^{-1} (\ol{Z}_1)$ is also surjective. 
Since $\ol{Z}'_1$ is reduced by construction, and since $q_\flat^{-1} (\ol{Z}_1) \cong \ol{Z}_1$ is also reduced as $\ol{Z}_1$ is reduced by construction, the surjection $\ol{Z}'_1 \to q_\flat^{-1} (\ol{Z}_1)$ must be an isomorphism of schemes.  
This finishes the proof of Lemma \ref{lem:iso-ZZ'}.
\end{proof}

\begin{proof}[Proof of Proposition \ref{prop:easy-pc}]
We will check Conditions (a)-(d) in Definition \ref{def:partial-comp}.
Conditions (b) and (c) are satisfied by construction. 

We check (d). The case $(ij)=(11)$ is obvious. 
The case $(ij)=(10)$ can be checked by 
\[
\ol{S}'(10) \cap \ol{S}(11) = (\ol{S}'(11) - \ol{Z}_1) \cap \ol{S}(11) = \ol{S}(11) - Z_1 = \ol{S}(10).
\]
The case $(ij) = (01)$ follows from Lemma \ref{lem:cleanness-V}.
The case $(ij) = (00)$ follows from $\ol{S}(00) \cong \ol{S}(10) \times_{\ol{S}(11)} \ol{S}(01)$.

Finally, we check Condition (a), i.e., that $S'$ is an $\ulMV^\fin$-square. 
Since all edges of $S'$ are minimal, it suffices to show that the square $\ol{S}'$ of schemes is an elementary Nisnevich square. 
The horizontal maps of $\ol{S}'$ are open immersions, and the vertical maps of $\ol{S}'$ are \'etale by construction. 
In view of Lemma \ref{lem:iso-ZZ'}, noting that $\ol{S}'(11) - \ol{S}(10) = \ol{Z}_1$, it suffices to prove the following claim. 

\begin{claim}
$\ol{Z}'_1 \subset \ol{S}'(01) = V_\et$.
\end{claim}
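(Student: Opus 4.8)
The plan is to prove the stronger pointwise statement that $q$ is étale at every point of $\ol{Z}'_1$, which is precisely the assertion $\ol{Z}'_1 \subset V_\et$. The two ingredients I will combine are the hypothesis $(\ast)_{T,\ol{R}}$, which places $\ol{Z}'_1$ inside the flat locus $V_\flat$ of $q$, and Lemma \ref{lem:iso-ZZ'}, which identifies $\ol{Z}'_1$ with the \emph{entire} preimage $q_\flat^{-1}(\ol{Z}_1) = \ol{Z}_1 \times_{\ol{T}} V_\flat$ and asserts that this preimage maps isomorphically onto $\ol{Z}_1$ under $q_\flat$. Thus on $\ol{Z}'_1$ the map $q$ is already flat; the whole point is to upgrade this to étaleness.

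First I would fix $z \in \ol{Z}'_1$ and set $t := q(z) \in \ol{Z}_1$. Since $t$ lies on the closed subscheme $\ol{Z}_1 \subset \ol{T}$, the canonical point $\Spec \kappa(t) \to \ol{T}$ factors through $\ol{Z}_1$; feeding this factorization into the base-change description of the scheme-theoretic fibre gives
\[
q_\flat^{-1}(t) = V_\flat \times_{\ol{T}} \Spec\kappa(t) = q_\flat^{-1}(\ol{Z}_1) \times_{\ol{Z}_1} \Spec\kappa(t).
\]
By Lemma \ref{lem:iso-ZZ'} the projection $q_\flat^{-1}(\ol{Z}_1) \to \ol{Z}_1$ is an isomorphism, so the right-hand side collapses to $\Spec\kappa(t)$. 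Hence the fibre of $q_\flat$ over $t$ is a single reduced point with residue field $\kappa(t)$, and $z$ is its unique point.

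Next I would conclude étaleness. By $(\ast)_{T,\ol{R}}$ the point $z$ lies in $V_\flat$, so $q$ is flat at $z$; it is also of finite presentation (the base being the field $k$); and its fibre over $t$, being $\Spec\kappa(t)$, is trivially étale over $\kappa(t)$. A morphism that is flat and locally of finite presentation at $z$ with étale fibre at $z$ is étale at $z$, so $q$ is unramified, hence étale, at $z$. (Only the germ at $z$ matters, and since $V_\flat$ is open in $V$ the germs of $q$ and $q_\flat$ at $z$ coincide.) This yields $z \in V_\et$, and as $z \in \ol{Z}'_1$ was arbitrary we obtain $\ol{Z}'_1 \subset V_\et = \ol{S}'(01)$, which is the claim.

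I expect the only genuine content to be this last step: converting the fibral identity $q_\flat^{-1}(\ol{Z}_1) \cong \ol{Z}_1$ into étaleness along $\ol{Z}'_1$. Everything upstream is bookkeeping, and the conclusion is morally forced, since the original square is elementary Nisnevich and hence $\ol{p}_S$ is étale on $\ol{S}(01)$; the work already done in Lemma \ref{lem:iso-ZZ'} together with the hypothesis $(\ast)_{T,\ol{R}}$ is exactly what propagates this étaleness from the interior out to the closure $\ol{Z}'_1$.
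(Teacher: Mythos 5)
Your proof is correct and follows essentially the same route as the paper: both arguments use Lemma \ref{lem:iso-ZZ'} (specifically the identification $\ol{Z}'_1 \cong q_\flat^{-1}(\ol{Z}_1) \iso \ol{Z}_1$) to see that $q_\flat$ has trivial scheme-theoretic fibres along $\ol{Z}'_1$, hence is unramified there, and then conclude étaleness from flatness plus unramifiedness (the paper cites \cite[Th. 17.6.1]{EGA4-4}, which is exactly your ``flat + locally of finite presentation + étale fibre $\Rightarrow$ étale'' criterion). Your write-up merely makes the fibre computation pointwise and explicit, which the paper leaves implicit.
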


\begin{proof}
Since $\ol{Z}'_1 \to \ol{Z}_1$ is an isomorphism by Lemma \ref{lem:iso-ZZ'}, the flat morphism $q_\flat : V_\flat \to \ol{T}$ is unramified at each point of $\ol{Z}'_1$. This shows that $q_\flat$ is \'etale at each point of $\ol{Z}'_1$ by \cite[Th. 17.6.1]{EGA4-4}. 
This finishes the proof of the claim. 
\end{proof}

Thus, we have finished the proof of Proposition \ref{prop:easy-pc}.
\end{proof}

\subsection{A refinement of the general construction}

\begin{prop}\label{prop:pc-separation}
In \S \ref{subsection:general-observation}, we may choose $\ol{S}(01) \to \ol{R}$ satisfying the following conditions.
\begin{enumerate}
\item $\ol{p}_S : \ol{S}(01) \to \ol{S}(11)$ extends to a morphism $\ol{p} : \ol{R} \to \ol{T}$.
\item $\ol{Z}'_1 \cap \ol{Z}_2 = \emptyset$ and $\ol{Z}'_1 \cap \ol{Z}_3 = \emptyset$, where $\ol{Z}'_1$ and $\ol{Z}_2$ are the closures of $Z'_1$ and $Z_2$ in $\ol{R}$.
\end{enumerate}
\end{prop}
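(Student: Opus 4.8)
The plan is to reduce to the blow-up strategy of \S\ref{case2}: start from an arbitrary compactification and then separate the offending closures by a blow-up supported entirely on the boundary. Concretely, I would first use the graph construction indicated in \S\ref{subsection:general-observation} to produce an auxiliary compactification $\ol{S}(01)\to\ol{R}_0$ together with an extension $\ol{p}_0:\ol{R}_0\to\ol{T}$ of $\ol{p}_S$; this already yields condition (1) but not (2). Setting $U_0:=\ol{p}_0^{-1}(\ol{S}(11))$ and forming $Z'_1,Z_2,Z_3\subset U_0$ together with their closures $\ol{Z}'_1,\ol{Z}_2,\ol{Z}_3$ in $\ol{R}_0$ as in Lemma \ref{lem:Z123}, I would then set $W:=\ol{Z}'_1\cap(\ol{Z}_2\cup\ol{Z}_3)$ (scheme-theoretic intersection, $\ol{Z}_2,\ol{Z}_3$ reduced) and take $\ol{R}:=\Bl_W\ol{R}_0$ with $\pi:\ol{R}\to\ol{R}_0$ the blow-up and $\ol{p}:=\ol{p}_0\circ\pi$. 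Since $\pi$ is proper and birational, $\ol{R}$ is again a (proper) compactification of $\ol{S}(01)$, and condition (1) persists.

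The geometric engine is the elementary separation property of blow-ups: if $Y_1,Y_2$ are closed subschemes of a Noetherian scheme $X$ with ideal sheaves $I_1,I_2$, then on $\Bl_{Y_1\cap Y_2}X$ the strict transforms of $Y_1$ and $Y_2$ are disjoint. Indeed, writing $E$ for the exceptional divisor one has $(I_1+I_2)\mathcal{O}_{\ol{R}}=\mathcal{O}_{\ol{R}}(-E)$, so $J_i:=I_i\,\mathcal{O}_{\ol{R}}(E)$ are genuine ideal sheaves with $J_1+J_2=\mathcal{O}_{\ol{R}}$; the weak transforms $V(J_1),V(J_2)$ are therefore disjoint, and the strict transforms are contained in them. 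Applying this with $Y_1=\ol{Z}'_1$ and $Y_2=\ol{Z}_2\cup\ol{Z}_3$ separates the strict transform of $\ol{Z}'_1$ from that of $\ol{Z}_2\cup\ol{Z}_3$, hence from the strict transforms of each of $\ol{Z}_2$ and $\ol{Z}_3$.

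The step requiring care is to check that the data recomputed on $\ol{R}$ agree with what $\pi$ does to the data on $\ol{R}_0$. First I would show the center is supported in the boundary: by Lemma \ref{lem:Z123}(5) the sets $Z'_1,Z_2,Z_3$ are pairwise disjoint in $U_0$, and since each of them is closed in $U_0$ one has $\ol{Z}'_1\cap U_0=Z'_1$ and $\ol{Z}_i\cap U_0=Z_i$; hence $\ol{Z}'_1\cap\ol{Z}_2$ and $\ol{Z}'_1\cap\ol{Z}_3$, meeting $U_0$ only in empty intersections, lie in $\ol{R}_0\setminus U_0\subset\ol{R}_0\setminus\ol{S}(01)$, so $|W|\subset\ol{R}_0\setminus U_0$ and $\pi$ is an isomorphism over $U_0$. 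Consequently $\ol{S}(01)$ embeds unchanged in $\ol{R}$, condition (1) holds with $U:=\ol{p}^{-1}(\ol{S}(11))\cong U_0$, and the locally closed subsets $Z'_1,Z_2,Z_3$ recomputed in $U$ coincide with the old ones. Because each of $Z'_1,Z_2,Z_3$ lies in the isomorphism locus $U_0$ and is dense in the corresponding $\ol{Z}_\bullet$ (hence dense in the open subset $\ol{Z}_\bullet\setminus|W|$), its closure in $\ol{R}$ is exactly the strict transform of $\ol{Z}_\bullet$ under $\pi$; the separation property of the previous paragraph then yields $\ol{Z}'_1\cap\ol{Z}_2=\emptyset$ and $\ol{Z}'_1\cap\ol{Z}_3=\emptyset$ in $\ol{R}$, which is condition (2). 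The main obstacle, and the point I would write out most carefully, is precisely this identification of the recomputed closures with strict transforms, together with the verification that the separation lemma applies to the possibly reducible reduced subschemes $\ol{Z}'_1,\ol{Z}_2,\ol{Z}_3$; everything else is formal and parallels the Zariski treatment in Remark \ref{rem:Zariski}.
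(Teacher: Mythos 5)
Your proposal is correct and takes essentially the same approach as the paper: both produce $\ol{R}_0$ by the graph trick and then separate the offending closures by blowing up scheme-theoretic intersections supported in the boundary, verifying that the center misses $U_0$ (so the embedding of $\ol{S}(01)$ lifts and the $Z$-data are unchanged) and that the recomputed closures are the strict transforms, which the blow-up makes disjoint. The only cosmetic difference is that you use a single blow-up along $\ol{Z}'_1\cap(\ol{Z}_2\cup\ol{Z}_3)$, while the paper performs two successive blow-ups, first along $\ol{Z}'_1\times_{\ol{R}_0}\ol{Z}_2$ and then along the intersection of the strict transforms of $\ol{Z}'_1$ and $\ol{Z}_3$.
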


Before going into the proof, we prepare a definition and a lemma which will be used several times.

\begin{defn}\label{def:blow-up-mod}
Let $M \in \ulMSm$, and let $F$ be a closed subscheme of $\ol{M}$ such that $F \cap M^\o = \emptyset$.
Let 
\[
\ol{\pi} : \Bl_F (\ol{M}) \to \ol{M}
\]
be the blow-up of $\ol{M}$ along $F$, and let 
\[
\ol{\nu} : \Bl_F (\ol{M})^N \to \Bl_F (\ol{M})
\]
be the normalization of $\Bl_F (\ol{M})^N$.
Set 
\begin{align*}
\Bl_F (M) &:= (\Bl_F (\ol{M}) , \ol{\pi}^\ast M^\infty ), \\
\Bl_F (M)^N &:= (\Bl_F (\ol{M})^N , \ol{\nu}^\ast \ol{\pi}^\ast M^\infty ).
\end{align*}

By construction, $\Bl_F (M)^\o = (\Bl_F (M)^N)^\o = M^\o$.
Moreover, the maps $\ol{\pi}$ and $\ol{\nu}$ induce minimal morphisms $\pi : \Bl_F (M) \to M$ and $\nu : \Bl_F (M)^N \to \Bl_F (M)$. 

We call $\pi : \Bl_F (M) \to M$ (resp. $\pi \nu : \Bl_F (M)^N \to M$) \emph{the blow-up of $M$ along $F$} (resp. \emph{the normalized blow-up of $M$ along $F$}). 
\end{defn}

\begin{lemma}\label{lem:blow-up-lift}
Let $M_0 \in \ulMSm$ and $(M_0 \to M) \in \Comp (M_0)$. 
Let $F$ be a closed subscheme of $\ol{M}$ with $F \cap M^\o = \emptyset$.
Assume that $F \cap \ol{M}_0$ is an effective Cartier divisor on $\ol{M}_0$. 
Then the following assertions hold.
\begin{enumerate}
\item The open immersion $\ol{M}_0 \to \ol{M}$ lifts uniquely to an open immersion $\ol{M}_0 \to \Bl_F (\ol{M})$. 
This defines an object $(M_0 \to \Bl_F (M)) \in \Comp (M_0)$ which dominates $M_0 \to M$.
\item If $\ol{M}_0$ is normal, the open immersion $\ol{M}_0 \to \ol{M}$ lifts uniquely to an open immersion $\ol{M}_0 \to \Bl_F (\ol{M})^N$.
This defines an object $(M_0 \to \Bl_F (M)^N ) \in \Comp (M_0)$ which dominates $M_0 \to M$.
\end{enumerate}
\end{lemma}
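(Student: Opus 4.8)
The plan is to prove the two assertions in parallel, since (2) is the normalized version of (1) and the key geometric input is the same. The central point is the universal property of blowing up: a morphism $g : T \to \ol{M}$ factors through $\ol{\pi} : \Bl_F(\ol{M}) \to \ol{M}$ if and only if the pullback ideal sheaf $g^{-1}(\sI_F)\cdot \sO_T$ is invertible, where $\sI_F$ is the ideal sheaf defining $F$. First I would apply this to the open immersion $g : \ol{M}_0 \inj \ol{M}$ coming from $(M_0 \to M)\in\Comp(M_0)$. By hypothesis $F \cap \ol{M}_0$ is an effective Cartier divisor on $\ol{M}_0$, which is precisely the statement that the pullback of $\sI_F$ to $\ol{M}_0$ is an invertible ideal sheaf. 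Hence the universal property produces a unique morphism $\ol{M}_0 \to \Bl_F(\ol{M})$ lifting the open immersion. Uniqueness in the universal property gives the asserted uniqueness of the lift.

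Next I would check that this lift is again an open immersion, which is where I expect the one subtle point to lie. The composite $\ol{M}_0 \to \Bl_F(\ol{M}) \xrightarrow{\ol{\pi}} \ol{M}$ is the original open immersion, so $\ol{M}_0 \to \Bl_F(\ol{M})$ is a monomorphism; moreover $\ol{\pi}$ is an isomorphism over the open locus where $\sI_F$ is already invertible, in particular over $\ol{M}-F$. Since $F \cap M^\o = \emptyset$, we have $M^\o \subset \ol{M}-F$, and the open immersion $\ol{M}_0 \to \ol{M}$ restricts to an isomorphism over the dense open $M^\o$; combining these, the lift $\ol{M}_0 \to \Bl_F(\ol{M})$ is an isomorphism onto its image over $\ol{M}-F$ and agrees set-theoretically with $\ol{M}_0$ viewed inside $\Bl_F(\ol{M})$. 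To conclude it is an open immersion I would invoke Lemma~\ref{lem:no-extra-fiber} applied to $\ol{\pi}$: the dense open $U := \ol{M}_0 \cap (\ol{M}-F)$ has image $\ol\pi^{-1}$-behaviour controlled so that $\ol\pi^{-1}(\ol{M}_0) = \ol{M}_0$, identifying the image with an open subscheme. (Here I use that $\ol{M}_0$ is dense in $\ol{M}$ and that $\ol\pi$ is proper birational.)

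For assertion (2), under the additional hypothesis that $\ol{M}_0$ is normal, the lift $\ol{M}_0 \to \Bl_F(\ol{M})$ factors uniquely through the normalization $\ol{\nu} : \Bl_F(\ol{M})^N \to \Bl_F(\ol{M})$ by the universal property of normalization: $\ol{M}_0$ is normal and the map is dominant (an open immersion onto a dense open of the blow-up, as $M^\o$ is dense), so it lifts canonically to the normalization, and this lift is again an open immersion since $\ol{\nu}$ is an isomorphism over the normal locus, which contains the image of $\ol{M}_0$. Finally, in both cases I must verify that the resulting object lies in $\Comp(M_0)$ and dominates $M\to M_0$. The divisorial conditions are automatic: by Definition~\ref{def:blow-up-mod} the morphisms $\pi$ and $\nu$ are minimal, so the composites $\Bl_F(M) \to M$ and $\Bl_F(M)^N \to M$ are minimal morphisms in $\MSm$, whence the lifts $M_0 \to \Bl_F(M)$ and $M_0 \to \Bl_F(M)^N$ are ambient, minimal, with dense open ambient-space immersions, and the complementary divisor condition (d) of Definition~\ref{d1} transfers along the pullback $\ol\pi^\ast$ (resp.\ $\ol\nu^\ast\ol\pi^\ast$) of $M^\infty$; the factorization through $M$ gives the domination. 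I expect the main obstacle to be the verification that the canonical lift is genuinely an \emph{open} immersion rather than merely a monomorphism, for which the density of $M^\o$ together with Lemma~\ref{lem:no-extra-fiber} is the essential tool.
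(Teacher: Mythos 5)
Your overall strategy --- produce the lift via the universal property of the blow-up, then verify the conditions of Definition \ref{d1} --- is the same as the paper's, but the step you yourself single out as the subtle one, namely that the lift $j_1:\ol{M}_0\to\Bl_F(\ol{M})$ of the open immersion $j:\ol{M}_0\to\ol{M}$ is again an \emph{open} immersion, has a genuine gap as you argue it. What you actually establish is that $j_1$ is a monomorphism and that $\ol{\pi}$ is an isomorphism over $\ol{M}-F$; this is not enough. A monomorphism which is an isomorphism over a dense open subset of the target need not be an open immersion: remove one of the two preimages of the node from the normalization of a nodal curve; the resulting map is a monomorphism (universally injective and unramified) and an isomorphism over the smooth locus, yet it is not an open immersion at the remaining preimage of the node. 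Your appeal to Lemma \ref{lem:no-extra-fiber} does not close this hole: applied with $U=\ol{M}_0\cap(\ol{M}-F)$, its conclusion is only $\ol{\pi}^{-1}(\ol{M}_0-F)=\ol{M}_0-F$, which constrains nothing over the points of $F\cap\ol{M}_0$, exactly where the exceptional fibres of $\ol{\pi}$ could a priori be positive-dimensional; and applying it instead to $U=j_1(\ol{M}_0)$ would be circular, since the lemma requires $U$ to be open.

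The fix is one line, and it uses an observation you already made but exploited only over $\ol{M}-F$: $\ol{\pi}$ is an isomorphism over the entire locus where $\sI_F$ is invertible, and by the Cartier hypothesis this locus contains all of $\ol{M}_0$. Precisely, blowing up commutes with restriction to the open subscheme $\ol{M}_0$, so $\ol{\pi}^{-1}(\ol{M}_0)\to\ol{M}_0$ is the blow-up of $\ol{M}_0$ along the effective Cartier divisor $F\cap\ol{M}_0$, hence an isomorphism; thus $j_1$ is the inverse of this isomorphism followed by the open inclusion $\ol{\pi}^{-1}(\ol{M}_0)\subset\Bl_F(\ol{M})$, manifestly an open immersion. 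The same statement for $\ol{\pi}\,\ol{\nu}$ (using normality of $\ol{M}_0$, so that $\ol{\nu}$ is also an isomorphism over $\ol{M}_0$) disposes of assertion (2). This is exactly how the paper's proof begins: ``the assumption shows that $\pi$ is an isomorphism over $\ol{M}_0$''. A second, smaller point: the $\Comp(M_0)$-conditions are not quite ``automatic''. Writing $M^\infty=M^\infty_{0,M}+C_M$ with $|C_M|=\ol{M}-\ol{M}_0$, as in Definition \ref{d1}, one must still check $|\ol{\pi}^\ast C_M|=\Bl_F(\ol{M})-j_1(\ol{M}_0)$, i.e. that $\ol{\pi}^{-1}(j(\ol{M}_0))=j_1(\ol{M}_0)$; the paper does this with Lemma \ref{lem:no-extra-fiber} (legitimately, since at that stage $j_1(\ol{M}_0)$ is known to be open and dense), though it also follows at once from the isomorphism over $\ol{M}_0$.
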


\begin{proof}
Since $(M_0 \to M) \in \Comp (M_0)$, there exist effective Cartier divisors $M_{0,M}^\infty ,C_M$ on $\ol{M}$ such that $M^\infty = M_{0,M}^\infty + C_M$, $|C_M| = \ol{M} - \ol{M}_0$ and $M_{0,M}^\infty \cap \ol{M}_0 = M_0^\infty$.

(1): The assumption shows that $\pi : \Bl_F (\ol{M}) \to \ol{M}$ is an isomorphism over $\ol{M}_0$.
Therefore, the open immersion $j : \ol{M}_0 \to \ol{M}$ lifts uniquely to an open immersion $j_1 : \ol{M}_0 \to \Bl_F (\ol{M})$:
\[\xymatrix{
& \Bl_F (\ol{M}) \ar[d]^\pi \\
\ol{M}_0 \ar[r]^j \ar[ur]^{j_1} & \ol{M}.
}\]

By construction, we have
\[
\Bl_F (M)^\infty = \pi^\ast M^\infty = \pi^\ast M_{0,M}^\infty + \pi^\ast C_{M}.
\]
Moreover, we have 
\[
j_1^\ast \pi^\ast M_{0,M}^\infty = j^\ast M_{0,M}^\infty = M_0^\infty ,
\]
and
\begin{align*}
|\pi^\ast C_{M}| &= \pi^{-1} (|C_{M}|) = \pi^{-1} (\ol{M} - j(\ol{M}_0)) = \Bl_F (\ol{M}) - \pi^{-1} (j(\ol{M}_0)) \\
&=\Bl_F (\ol{M}) - \pi^{-1} (\pi j_1 (\ol{M}_0)) \\
&= \Bl_F (\ol{M}) - j_1 (\ol{M}_0),
\end{align*}
where the last equality follows from the Lemma \ref{lem:no-extra-fiber}. This proves that $j_1$ defines an object $(M_0 \to \Bl_F (M)) \in \Comp (M_0)$ which dominates $M_0 \to M$.

(2): The assumption shows that $\pi \nu : \Bl_F (\ol{M})^N \to \ol{M}$ is an isomorphism over $\ol{M}_0$.
Therefore, the open immersion $j : \ol{M}_0 \to \ol{M}$ lifts uniquely to an open immersion $j_2 : \ol{M}_0 \to \Bl_F (\ol{M})^N$.
The rest of the argument is the same as above. This finishes the proof.
\end{proof}

\begin{proof}[Proof of Proposition \ref{prop:pc-separation}]
We start from a construction as in \S \ref{subsection:general-observation}; for clarity, we write $\ol{R}_0$ instead of $\ol{R}$ but keep the other notation $(Z'_1 ,\ol{Z}'_1, \dots)$.
Let
\[
\pi_1 : \ol{R}_1 := \Bl_{\ol{Z}'_1 \times_{\ol{R}_0} \ol{Z}_2} (\ol{R}_0) \to \ol{R}_0
\]
be the blow-up of $\ol{R}_0$ along $\ol{Z}'_1 \times_{\ol{R}} \ol{Z}_2$.
Then $\pi_1$ is an isomorphism over the open subscheme $\ol{S}(01) \subset \ol{R}_0$ since $\ol{Z}'_1 \times_{\ol{R}} \ol{Z}_2 \cap \ol{S}(01) \subset Z'_1 \cap Z_2 = \emptyset$ by  Lemma \ref{lem:Z123} (5).
Therefore, the open immersion $\ol{S}(01) \to \ol{R}_0$ lifts uniquely to an open immersion $\ol{S}(01) \to \ol{R}_1$.
Moreover, the strict transforms $\pi_1^\# \ol{Z}'_1$ and $\pi_1^\# \ol{Z}_2$, i.e., the closures of $Z'_1$ and $Z_2$ in $\ol{R}_1$, do not intersect. 

Similarly, let 
 \[
\pi_2 : \ol{R}_2 := \Bl_{\pi_1^\# \ol{Z}'_1 \times_{\ol{R}_1} \pi_1^\# \ol{Z}_3} (\ol{R}_1) \to \ol{R}_1
\]
be the blow-up of $\ol{R}_1$ along $\pi_1^\# \ol{Z}'_1 \times_{\ol{R}_1} \pi_1^\# \ol{Z}_3$.
Then $\pi_2$ is an isomorphism over the open subscheme $\ol{S}(01) \to \ol{R}_1$ since $\pi_1^\# \ol{Z}'_1 \times_{\ol{R}_1} \pi_1^\# \ol{Z}_3 \cap \ol{S}(01) \cong \ol{Z}'_1 \times_{\ol{R}} \ol{Z}_3 \cap \ol{S}(01) \subset Z'_1 \cap Z_3 = \emptyset$ by  Lemma \ref{lem:Z123}.
Therefore, the open immersion $\ol{S}(01) \to \ol{R}_1$ lifts uniquely to an open immersion $\ol{S}(01) \to \ol{R}_2$. 
Moreover, the strict transforms $\pi_2^\# \pi_1^\# \ol{Z}'_1$ and $\pi_2^\# \pi_1^\# \ol{Z}'_3$, i.e., the closures of $Z'_1$ and $Z_3$ in $\ol{R}_2$, do not intersect. 
Recalling that $\pi_1^\# \ol{Z}'_1 \cap  \pi_1^\# \ol{Z}'_2 = \emptyset$, we also see that the strict transforms $\pi_2^\# \pi_1^\# \ol{Z}'_1$ and $\pi_2^\# \pi_1^\# \ol{Z}'_2$, i.e., the closures of $\ol{Z}'_1$ and $\ol{Z}_2$ in $\ol{R}_2$, do not intersect.
Thus, setting $\ol{R} := \ol{R}_3$, we get the desired compactification $\ol{S}(01) \to \ol{R}$.
\end{proof}

\subsection{End of proof of Theorem \protect\ref{ex-pc}}
It suffices to show the following.

\begin{prop}
There exist $(S(11) \to T') \in \Comp (S(11))$ and a compactification $\ol{S}(01) \to \ol{R}_1$ which satisfy Condition $(\ast)_{T',\ol{R}_1}$ from \S \ref{subsection:pc-special}, such that $S(11) \to T'$ dominates $S(11) \to T$ and the morphism $T' \to T$ is minimal. 
\end{prop}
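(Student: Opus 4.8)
The plan is to combine the separation afforded by Proposition \ref{prop:pc-separation} with a flattening--by--blow-up argument on the base $\ol T$. Note first that condition $(\ast)_{T',\ol R_1}$ packages two requirements: that $\ol Z'_1$ lie in $V=\ol R_1-(\ol Z_2\cup\ol Z_3)$ (so that asking for flatness makes sense), and that the composite $q$ be flat along $\ol Z'_1$. I would therefore begin from a compactification $\ol S(01)\to\ol R$ produced by Proposition \ref{prop:pc-separation}, so that $\ol Z'_1\cap(\ol Z_2\cup\ol Z_3)=\emptyset$, i.e.\ $\ol Z'_1\subset V$. The flatness input already present is that, by Lemma \ref{lem:cleanness-V}, $q^{-1}(\ol S(11))=\ol S(01)$, and since $S$ is an $\ulMVfin$-square the map $\ol p_S:\ol S(01)\to\ol S(11)$ is \'etale; hence $q:V\to\ol T$ is flat over the dense open $\ol S(11)\subset\ol T$. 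Consequently the non-flat locus of $q$ avoids $\ol S(11)$, and the only obstruction to $(\ast)$ is the failure of flatness along the boundary part $\ol Z'_1-Z'_1$, which lies over $\ol T-\ol S(11)=|C_T|\subset|T^\infty|$.

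The crux is then to flatten $q$ by modifying the base. I would apply the Raynaud--Gruson flattening theorem (\emph{platification par \'eclatement}) to $q:V\to\ol T$, which is flat over the open $\ol S(11)$: this yields a blow-up $b:\ol T'\to\ol T$ whose centre is disjoint from $\ol S(11)$---hence supported in $\ol T-\ol S(11)=|C_T|$, inside the boundary---such that the strict transform $\til V$ of $V$ is flat over $\ol T'$. Setting $T':=(\ol T',b^\ast T^\infty)$, the containment of the centre in the boundary is exactly what makes $T'$ admissible: $b$ is an isomorphism over $\ol S(11)$, so $\ol S(11)\to\ol T'$ is a dense open immersion; pulling back the decomposition $T^\infty=(S(11))_T^\infty+C_T$ along $b$ and using $|b^\ast C_T|=b^{-1}|C_T|=\ol T'-\ol S(11)$ verifies conditions (2)--(4) of Definition \ref{d1}, while properness of $\ol T'$ gives (1). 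Thus $(S(11)\to T')\in\Comp(S(11))$, and $b$ realises a minimal morphism $T'\to T$ under which $S(11)\to T'$ dominates $S(11)\to T$.

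It remains to produce $\ol R_1$ and check $(\ast)_{T',\ol R_1}$. I would take $\ol R_1$ to be the strict transform of $\ol R$ under $b$, i.e.\ the closure of $\ol S(01)$ in $\ol R\times_{\ol T}\ol T'$, with induced $\ol p_1:\ol R_1\to\ol T'$ extending $\ol p_S$; as $\ol R$ is proper and $\ol S(01)$ dense, $\ol R_1$ is again a compactification of $\ol S(01)$. Write $g:\ol R_1\to\ol R$ for the structural morphism, $V_1:=\ol R_1-(\ol Z_2\cup\ol Z_3)$ and $q_1:=\ol p_1|_{V_1}$, with closures now taken in $\ol R_1$. \emph{(i) Disjointness is preserved.} The map $g$ is proper and an isomorphism over $U=\ol p^{-1}(\ol S(11))$, because $b$ is an isomorphism over $\ol S(11)$ and, by Lemma \ref{lem:Z123}, $Z'_1,Z_2,Z_3\subset U$; hence these loci are unchanged in $\ol R_1$, and each of their closures in $\ol R_1$ is contained in the $g$-preimage of the corresponding closure in $\ol R$. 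Since the latter are pairwise disjoint by Proposition \ref{prop:pc-separation}, so are the former, giving $\ol Z'_1\cap(\ol Z_2\cup\ol Z_3)=\emptyset$ in $\ol R_1$ and $\ol Z'_1\subset V_1$. \emph{(ii) Flatness along $\ol Z'_1$.} One identifies $\til V=\ol R_1-g^{-1}(\ol Z_2\cup\ol Z_3)$, an open subscheme of $\ol R_1$ flat over $\ol T'$ by construction; moreover $\til V\subset V_1$. As $g(\ol Z'_1)$ lies in the closure of $Z'_1$ in $\ol R$, which is disjoint from $\ol Z_2\cup\ol Z_3$ by Proposition \ref{prop:pc-separation}, we obtain $\ol Z'_1\cap g^{-1}(\ol Z_2\cup\ol Z_3)=\emptyset$, i.e.\ $\ol Z'_1\subset\til V$. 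Hence $q_1$ is flat at every point of $\ol Z'_1$, so $\ol Z'_1\subset(V_1)_\flat$, which is precisely $(\ast)_{T',\ol R_1}$; together with the previous paragraph this proves the proposition.

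The essential ingredient---and the step I expect to be the crux---is the Raynaud--Gruson flattening, whose use here hinges on one point: the centre of the flattening blow-up must lie inside the boundary $|T^\infty|$, since this is what guarantees $T'\in\Comp(S(11))$ and the minimality of $T'\to T$. This is automatic because $q$ is already flat over $\ol S(11)$, so the theorem blows up solely over $\ol T-\ol S(11)=|C_T|$. Granting the flattening, the remaining verifications are formal: the identification $\til V=\ol R_1-g^{-1}(\ol Z_2\cup\ol Z_3)$ together with the preimage inclusions for the proper map $g$ reduce both the disjointness and the flatness along $\ol Z'_1$ to the separation already achieved in $\ol R$ by Proposition \ref{prop:pc-separation}. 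This is exactly why performing the separation before the flattening succeeds, and the order cannot be reversed without risking the flatness produced over the boundary.
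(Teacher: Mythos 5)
Your proof is correct and is essentially the paper's own argument: start from the compactification furnished by Proposition \ref{prop:pc-separation}, apply Raynaud--Gruson platification to $q:V\to\ol{T}$ (which is flat, indeed \'etale, over $\ol{S}(11)$) to obtain a blow-up centre $C\subset\ol{T}-\ol{S}(11)$, set $T':=\Bl_C(T)$, take $\ol{R}_1$ to be the strict transform of $\ol{R}$, and check that the disjointness achieved in Proposition \ref{prop:pc-separation} and the flatness along $\ol{Z}'_1$ survive the modification. The only differences are presentational: the paper invokes Lemma \ref{lem:blow-up-lift} where you verify the conditions of Definition \ref{d1} by hand, and your assertion that the three closures are \emph{pairwise} disjoint slightly overstates Proposition \ref{prop:pc-separation} (which only gives disjointness of $\ol{Z}'_1$ from $\ol{Z}_2$ and from $\ol{Z}_3$), but that weaker statement is all your argument actually uses.
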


\begin{proof}
Take a compactification $\ol{S}(01) \to \ol{R}$ as in Proposition \ref{prop:pc-separation}.

Recall that $V=\ol{R} - (\ol{Z}_2 \cup \ol{Z}_3)$.
By assumption, we have $\ol{Z}'_1 \subset V$.
Moreover, we have $q^{-1} (\ol{S}(11)) = \ol{S}(01)$ by Lemma \ref{lem:cleanness-V}.
In particular, $q : V \to \ol{T}$ is \'etale over $\ol{S}(11) \subset \ol{T}$.

Therefore, by the theorem of ``platification'' \cite[Th. 5.2.2]{rg} of Raynaud-Gruson, there exists a closed subscheme $C \subset \ol{T} - \ol{S}(11)$ which satisfies the following condition: 
define $T_1 := \Bl_C (T)$ (see Definition \ref{def:blow-up-mod}).
Let 
\[
\pi'_1 : \ol{R}_1 := \pi_1^\# \ol{R} \to \ol{R}
\]be the strict transform, i.e., the blow-up of $\ol{R}$ along the closed subscheme $\ol{p}^{-1} (C) \subset \ol{R}$.
Let $\ol{p}_1 := \pi_1^\# \ol{p} : \ol{R}_1 \to \ol{T}_1$ be the lift of $\ol{p}$.
Then the strict transform 
\[
V_1 := \pi_1^\# (V) = V \times_{\ol{T}} \ol{T}_1 \subset \ol{R}_1
\]
of $V$ is flat over $\ol{T}_1$.

Note that $\pi_1$ is an isomorphism over $\ol{S}(11)$ since $C \cap \ol{S}(11) = \emptyset$.
Therefore, Lemma \ref{lem:blow-up-lift} shows that the open immersion $\ol{S}(11) \to \ol{T}$ lifts uniquely to an open immersion $\ol{S}(11) \to \ol{T}_1$, and it defines an object $(S(11) \to T_1) \in \Comp (S(11))$.

Moreover, $\pi'_1 : \ol{R}_1 := \pi_1^\# \ol{R} \to \ol{R}$ is an isomorphism over $\ol{S}(01) \subset \ol{R}$ since $\ol{S}(01)$ lies over $\ol{S}(11)$.
Therefore, the open immersion $\ol{S}(01) \to \ol{R}$ lifts uniquely to an open immersion $\ol{S}(01) \to \ol{R}_1$.

Since $\ol{Z}'_1 \cap \ol{Z}_2 = \ol{Z}'_1 \cap \ol{Z}_3 = \emptyset$, we have the corresponding equality for their strict transforms: $\pi_1^\# \ol{Z}'_1 \cap \pi_1^\#  \ol{Z}_2 = \pi_1^\#  \ol{Z}'_1 \cap \pi_1^\#  \ol{Z}_3 = \emptyset$.
Since $\pi_1^\# \ol{Z}'_1, \pi_1^\# \ol{Z}_2, \pi_1^\# \ol{Z}_3$ are the closures of $Z'_1 , Z_2 ,Z_3$ in $\ol{R}_1$, the pair $(S(11) \to T_1) \in \Comp (S(11))$ and $\ol{S}(01) \to \ol{R}_1$ satisfies $(\ast)_{T_1,\ol{R}_1}$.
Moreover, the blow-up $\pi_1 : \ol{T}_1 \to \ol{T}$ induces a minimal morphism $T_1 \to T$ by construction. 
This finishes the proof.
\end{proof}

\begin{cor}\label{cor:pc-normal}
If $S$ is a normal $\ulMVfin$-square and if $S \xrightarrow{j} S'$ is a partial compactification, then there exists a partial compactification $S \xrightarrow{j_1} S'_1$ such that $S'_1$ is normal and a minimal morphism $S'_1 \xrightarrow{p} S$ such that $pj_1=j$. 
\end{cor}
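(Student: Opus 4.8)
The plan is to normalize the target square $S'$ and to exploit the hypothesis that $S$ is already normal in order to lift $j$ through that normalization. Concretely, write $\nu : \ol{S}'(11)^N \to \ol{S}'(11)$ for the normalization of $\ol{S}'(11)$ and set $S'_1(11) := (\ol{S}'(11)^N, \nu^\ast S'(11)^\infty)$, so that $\nu$ induces a minimal morphism $S'_1(11) \to S'(11)$. For the remaining vertices I would define $S'_1(ij) := S'(ij) \times_{S'(11)} S'_1(11)$, the fiber product in $\ulMSm^\fin$, which exists by the minimality of one of the projections \cite[Cor. 1.10.7]{modsheaf1}. These base-change projections assemble into a minimal morphism $p : S'_1 \to S'$ (the target being $S'$, as forced by the intended relation $p j_1 = j$).

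First I would check that $S'_1$ is an $\ulMVfin$-square. Since every edge is minimal and minimality is stable under base change, it suffices to see that the underlying square $\ol{S}'_1$ of schemes is elementary Nisnevich; but $\ol{S}'_1$ is obtained from the elementary Nisnevich square $\ol{S}'$ by base change along $\nu$, and each defining property --- cartesianness, horizontal open immersions, vertical \'etale maps, and the isomorphism of reduced complements --- is preserved under arbitrary base change in $\Sch$. I also need each vertex to lie in $\ulMSm^\fin$: the interiors are untouched because $\nu$ is an isomorphism over the smooth (hence normal) dense interior $S'(11)^\o = S(11)^\o$, so $S'_1(ij)^\o = S'(ij)^\o$ remains smooth. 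Normality of $S'_1$ is then immediate: $\ol{S}'_1(11) = \ol{S}'(11)^N$ is normal by construction, and each $\ol{S}'_1(ij) = \ol{S}'(ij) \times_{\ol{S}'(11)} \ol{S}'(11)^N$ is \'etale, resp. an open immersion, over a normal scheme, hence normal.

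To produce $j_1$ and the factorization $p j_1 = j$, I would use normality of $S$. Since $\ol{S}(11)$ is normal, the preimage $\nu^{-1}(\ol{S}(11)) \to \ol{S}(11)$ is the normalization of $\ol{S}(11)$, hence an isomorphism; composing its inverse with the open immersion into $\ol{S}'(11)^N$ gives a canonical open immersion $\ol{S}(11) \to \ol{S}'(11)^N$ lifting the dense open immersion $\ol{S}(11) \to \ol{S}'(11)$ of Definition \ref{def:partial-comp}. Minimality then places $S(11) \to S'_1(11)$ in $\Comp(S(11))$, and the universal property of the fiber products defining the remaining $S'_1(ij)$ yields $j_1 : S \to S'_1$ with $p j_1 = j$. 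The four conditions of Definition \ref{def:partial-comp} for $j_1$ follow directly: (b) by the lift just constructed, (c) because on each vertex $j_1(ij)$ is the base change of the open immersion $j_1(11)$ along $\ol{S}'(ij) \to \ol{S}'(11)$, and (d) by composing the cartesian square of $S'_1$ over $S'$ with condition (d) for $j$.

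I expect the main obstacle to be the verification in the second paragraph that base change along the (non-flat, non-\'etale) normalization $\nu$ preserves the elementary Nisnevich square property, in particular condition (4) on reduced complements. The point is that this must be argued from the stability of upper distinguished squares for the Nisnevich cd-structure on $\Sch$ under arbitrary base change, rather than from any special property of $\nu$; once this is in place, the smoothness constraint defining $\ulMSm^\fin$ comes for free, since $\nu$ is an isomorphism over the interiors.
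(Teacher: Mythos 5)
Your proof is correct and is essentially the paper's own argument read in the opposite order: the paper defines $S'_1(ij)$ as the vertex-wise normalization $\Bl_{\emptyset}(S'(ij))^N$ and then identifies it with the fiber product $S'(ij)\times_{S'(11)}S'(11)^N$ (since normalization commutes with \'etale base change), while you build the base-change square over $\ol{S}'(11)^N$ first and then observe it is normal --- the same construction resting on the same two facts (stability of elementary Nisnevich squares under base change, and \'etale over normal is normal). Your lift of $j$ via normality of $\ol{S}(11)$ is exactly what the paper delegates to Lemma \ref{lem:blow-up-lift}(2), which also records the routine verification you leave implicit, namely that $S(11)\to S'_1(11)$ satisfies condition (4) of Definition \ref{d1} (the divisor decomposition), so there is no genuine gap.
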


\begin{proof} 
Let $S'_1 (ij) = \Bl_{\emptyset } (S' (ij))^N$ be the normalized blow-up along the empty subscheme for $(ij) \in \Sq$.
More explicitely, we have
\[
S'_1 (ij) = (\ol{S}'(ij)^N, S'(ij)^\infty \times_{\ol{S}'(ij)} \ol{S}'(ij)^N).
\]

By Lemma \ref{lem:blow-up-lift}, the morphisms $S \to S'(ij)$ uniquely lift to $(S(ij) \to S'_1(ij)) \in \Comp (S(ij))$ for all $(ij) \in \Sq$.

Since $\ol{S}'(ij) \to \ol{S}'(11)$ is \'etale for each $(ij) \in \Sq$, we have
\[
\ol{S}'(ij)^N \cong \ol{S}'(ij) \times_{\ol{S}'(11)} \ol{S}'(11)^N,
\]
hence
\[
S'_1 (ij) \cong S'(ij) \times_{S'(11)} S'(11)^N
\]
where the right hand side denotes the fiber product in $\ulMSm^\fin$, which exists by the minimality of (one of) the projection maps ((\cite[Cor. 1.10.7]{modsheaf1})).
Therefore, $S'_1(ij)$'s form an $\ulMVfin$-square $S'_1$. 
This finishes the proof.  
\end{proof}

\section{Cofinality of $\MV$-squares: the partially compact case}\label{s3}

\subsection{Main result} In this section we prove the following special case of Theorem \ref{main:cofinality-MV}:

\begin{thm}\label{main:cofinality-MV-partial}
Theorem \ref{main:cofinality-MV} is true if $S$ is partially compact in the sense of Definition \ref{def:partial-comp} (3). 
\end{thm}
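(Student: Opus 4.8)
The plan is to use two simplifications afforded by partial compactness and then reduce the only genuine difficulty to a separation problem for off-diagonals. First I would dispose of the $(11)$-corner. Since $S$ is partially compact, $\ol{S}(11)$ is proper over $k$; hence for any $(S\to T)\in\Comp(S)$ the dense open immersion $\ol{S}(11)\to\ol{T}(11)$ has proper source, so its image is at once closed, open and dense, forcing $\ol{S}(11)\iso\ol{T}(11)$ and, by minimality, $S(11)\iso T(11)$ in $\MSm$. I therefore assume $T(11)=S(11)$ and take $T'(11)=T(11)$ with $T'(11)\to T(11)$ the identity, which is ambient and minimal. Secondly, Condition \ref{thm:new-MV2} of Definition \ref{def:new-MV} holds automatically for \emph{any} $T'\in\Comp(S)$: the interior square $T'^\o$ equals the elementary Nisnevich square $S^\o$, and $S$ itself (an $\ulMV^\fin$-square with $S(11)\in\MSm$) serves as the witnessing $\ulMV$-square through the compactification morphism $S\to T'$, which is the identity on interiors and on the $(11)$-corner. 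So I am left to secure Conditions \ref{thm:new-MV1} and \ref{thm:new-MV3}.

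Next I would build $T'$ so that Condition \ref{thm:new-MV1} is automatic and Condition \ref{thm:new-MV3} becomes a concrete statement about a single morphism. Choosing refinements $T'(01)\to T(01)$ and $T'(10)\to T(10)$ in $\Comp$ (to be specified below) and keeping $T'(11)=T(11)$, I define $T'(00):=T'(01)\times_{T'(11)}T'(10)$ as the fibre product in $\ulMSm^\fin$, which exists because its interior is $S(00)^\o\in\Sm$; the universal property then gives a morphism $T(00)\to T'(00)$ compatible with the square, so $T'$ dominates $T$, and Condition \ref{thm:new-MV1} holds by construction. With $T'(00)$ so defined, $q_{T'}$ is the base change of $p_{T'}$ along $u_{T'}$, and base-changing the off-diagonal decomposition $T'(01)\times_{T'(11)}T'(01)\cong T'(01)\sqcup\OD(p_{T'})$ along $T'(10)\to T'(11)$ yields $\OD(q_{T'})\cong\OD(p_{T'})\times_{T'(11)}T'(10)$, under which the comparison map of Condition \ref{thm:new-MV3} is identified with the projection onto $\OD(p_{T'})$.

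The heart of the argument is to choose the refinements of $T(01)$ and $T(10)$ so that this projection is an isomorphism in $\MSm$. On interiors there is nothing to prove: since $\ol{S}$ is an elementary Nisnevich square, Condition (4) of a Nisnevich square forces $\OD(p_S^\o)$ to lie entirely over the open subset $S(10)^\o\subset S(11)^\o$, so the interior projection is already an isomorphism. What remains is the integral statement that the proper modulus pair $\OD(p_{T'})$ is ``clean'' over the boundary: its ambient space should map into $\ol{T}'(10)$ and its modulus should be compatible with restriction along $u_{T'}$. This is exactly the type of separation carried out in the constructions of \S\ref{subsection:general-observation} and \S\ref{subsection:pc-special}: I would form, inside a compactification of $S(01)^\o\times_{S(11)^\o}S(01)^\o$, the closures of the off-diagonal and of the locus lying over $\ol{S}(11)-\ol{T}'(10)$, separate them by blowing up along their intersection, and lift the resulting modification back to $\ol{T}(01)$ and $\ol{T}(10)$ using Lemmas \ref{lem:b-up} and \ref{lem:blow-up-lift}, keeping $T'(01)$ and $T'(10)$ in $\Comp$ and dominating $T$. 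The final divisor bookkeeping --- arranging that the two pulled-back moduli genuinely coincide and not merely share support --- would be handled by Lemmas \ref{lKL} and \ref{lem:increasing}.

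I expect the main obstacle to be precisely this separation: making the off-diagonal split off and lie cleanly over the bottom edge \emph{simultaneously} with keeping the square a pull-back and dominating the given $T$, since blowing up $\ol{T}(01)$ and $\ol{T}(10)$ propagates through the fibre product into the $(00)$-corner and must not violate the compactification conditions of Definition \ref{d1} there. Once the separation and the divisor equality are in place, $\OD(q_{T'})\to\OD(p_{T'})$ is an isomorphism, all three conditions of Definition \ref{def:new-MV} hold, and $(S\to T')\in\Comp^{\MV}(S)$ dominates $(S\to T)$ with $T'(11)\to T(11)$ the identity. Cofinality of $\Comp^{\MV}(S)$ in $\Comp(S)$ then follows exactly as stated in Theorem \ref{main:cofinality-MV}.
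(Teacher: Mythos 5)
Your skeleton does match the paper's at the structural level: the $(11)$-corner is rigid by properness, Condition \ref{thm:new-MV2} of Definition \ref{def:new-MV} is automatic, and one wants a cartesian $T'$ for which Condition \ref{thm:new-MV3} reduces, via the identification $\OD(q_{T'})\cong\OD(p_{T'})\times_{T'(11)}T'(10)$, to inverting a single projection. But at both places where the real work lies, your argument is either incorrect or missing. The first is the domination. The universal property of $T'(00):=T'(01)\times_{T'(11)}T'(10)$ cannot give a morphism $T(00)\to T'(00)$: that would require maps $T(00)\to T'(01)$ and $T(00)\to T'(10)$, whereas the refinement maps go the other way, $T'(01)\to T(01)$ and $T'(10)\to T(10)$. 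Moreover, domination in $\Comp(S)$ means a morphism $T'\to T$, hence what is needed is $T'(00)\to T(00)$; since $T(00)$ is \emph{not} itself a fibre product, this is a genuine admissibility statement, not a formality. The only handle on it is that $T(00)^\infty$ and the fibre-product modulus agree over $\ol{S}(00)$ and differ on the boundary, so one must first bound $T(00)^\infty\leq \ol{d}^\ast T(11)^\infty+n_T D$ with $D=\sup(\ol{q}_T^\ast D_{10},\ol{v}_T^\ast D_{01})$ (Lemma \ref{lem:bound-DD}, which presupposes the normality and universal-supremum Conditions \ref{condition:ans}, arranged by the normalized blow-ups of Lemma \ref{lem:normal-T}), and then take $T'(01)$, $T'(10)$ with moduli enlarged by at least $n_TD_{01}$, $n_TD_{10}$ (Proposition \ref{prop:cofinality-pullback}). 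Your refinements ``to be specified below'' are never pinned down to do this; with arbitrary refinements the morphism $T'(00)\to T(00)$ simply does not exist, so Condition \ref{thm:new-MV1} together with the domination is unestablished.

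The second gap is Condition \ref{thm:new-MV3} itself. You correctly reduce to the projection $\OD(p_{T'})\times_{T'(11)}T'(10)\to\OD(p_{T'})$ and correctly note that its interior is an isomorphism, but you then outsource the integral statement to a ``separation of closures by blow-ups'' in the style of \S\S\ref{subsection:general-observation}--\ref{subsection:pc-special}, with divisor bookkeeping as a final step. This misidentifies the mechanism: the obstruction is not that two closed subsets meet, it is the divisor inequality \eqref{eq:nD}, namely that $a^\ast\OD(q_T)^\infty$ (which contains the pullback of $T(10)^\infty$) be dominated by $b^\ast c^\ast T(11)^\infty+nD'$ on a normal model. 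The paper achieves this with \emph{no} modification of the ambient spaces of $T$ at all: Lemmas \ref{claim:3.1.9}, \ref{claim:3.1.10} and \ref{claim:UD} identify the discrepancy locus with $|D'|$, Lemma \ref{lem:increasing} supplies the multiplicity $n$, one sets $T'=T_{0,n}$, and Lemma \ref{lem:cartesian-OD} together with Proposition \ref{prop:OD-T-prime} (using Lemma \ref{lKL} on a normalized graph) constructs the inverse; the only blow-up in this part is of the auxiliary graph $\Gamma$, performed merely so that $D'$ is an honest Cartier divisor. Blowing up $\ol{T}(01)$ and $\ol{T}(10)$, as you propose, does not by itself produce any such inequality --- the required bound concerns multiplicities along boundary divisors, not supports --- and, as you yourself worry, it would destabilize the cartesian property, the compactification conditions at the $(00)$-corner, and the domination that your first step already took for granted. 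So the technical heart of the proof is missing from the proposal.
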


This is the technical heart of the paper. The strategy is simple: we achieve successively conditions (1) and (3) of Definition \ref{def:new-MV}, Condition (2) being automatic.

In the sequel, we fix a normal $\ulMV^\fin$-square $S$; it will be assumed partially compact from Subsection \ref{s3.4} onwards.

\subsection{Another general construction}\label{sssection:cof-gencon}

Here, we prepare a general setting which will be used in the proof of Theorem \ref{main:cofinality-MV-partial}.

Let  $(S \to T) \in \Comp (S)$. Since $(S(ij) \to T(ij)) \in \Comp (S(ij))$ for each $(ij) \in \Sq$, we can find an effective Cartier divisor $D_{ij}$ on $\ol{T}(ij)$ such that $|D_{ij}| = \ol{T}(ij) - \ol{S}(ij)$.

Write 
\[\xymatrix{
T(00) \ar[r]^{v_T} \ar[d]_{q_T} \ar[dr]^{d_T} & T(01) \ar[d]^{p_T} \\
T(10) \ar[r]^{u_T} & T(11),
}\]
where $d_{T} := p_T v_T = u_T q_T$.
All morphisms in the diagram are ambient by assumption. 

We shall need the following Condition \ref{condition:ans}.
We recall a definition and a lemma.

\begin{defn}\label{defn:uni-sup}
Two effective Cartier divisors $D, E$ on a scheme $X$ \emph{have a universal supremum} if $D \times_X E$ is an effective Cartier divisor on $X$.
If $D,E$ have a universal supremum, we define an effective Cartier divisor  $\sup (D,E)$ on $X$ by $\sup (D,E) :=  D+E-D \times_X E$. 
\end{defn}

\begin{lemma}
Let $D,E$ be effective Cartier divisors on a scheme $X$ which have a universal supremum.
Then, for any morphism $f:Y \to X$ in $\Sch$ such that $Y$ is normal and such that $f(T) \not\subset |D| \cup |E|$ for any irreducible component $T$ of $Y$, the effective Cartier divisors  $f^\ast D$ and $f^\ast E$ have a universal supremum, and  $f^\ast \sup (D,E) = \sup (f^\ast D,f^\ast E)$.
Moreover, if we regard $\sup (f^\ast D,f^\ast E)$ as a Weil divisor on the normal scheme $Y$, it coincides with the smallest Weil divisor which is larger than $f^\ast D$ and $f^\ast E$.
\end{lemma}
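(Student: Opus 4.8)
The plan is to reduce the statement to local computations with equations for the divisors, the one genuinely new input being that the scheme-theoretic intersection $D\times_X E$ is compatible with pullback, after which the Weil-divisor assertion becomes a direct computation at the codimension-one points of $Y$. First I would record the support containments that legitimize all the pullbacks. Since $Y$ is normal it is the disjoint union of its irreducible components, each of which is integral, and its associated points are exactly their generic points; hence for an effective Cartier divisor $G$ on $X$ the pullback $f^*G$ is a well-defined effective Cartier divisor as soon as $f(T)\not\subset |G|$ for every irreducible component $T$ of $Y$. From $f(T)\not\subset |D|\cup|E|$ we get $f(T)\not\subset|D|$ and $f(T)\not\subset|E|$, so $f^*D$ and $f^*E$ are defined. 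Moreover $|D\times_X E|=|D|\cap|E|$ and $|\sup(D,E)|\subseteq|D|\cup|E|$ (off $|D|\cup|E|$ both $d$ and $e$ are units, so $D\times_X E$ and $\sup(D,E)$ are trivial there), whence $f^*(D\times_X E)$ and $f^*\sup(D,E)$ are defined as well.

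Next comes the key step. Work locally on $X$ with $D=V(d)$, $E=V(e)$ for non-zero-divisors $d,e$, and with the effective Cartier divisor $D\times_X E$ written as $V(g)$, where $(g)=(d,e)$ since the scheme-theoretic intersection has ideal $(d,e)$. Applying the local ring map $f^\#$ to $(g)=(d,e)$ yields $(f^\#g)=(f^\#d,f^\#e)$ in $\mathcal{O}_Y$, so that $f^*(D\times_X E)=V(f^\#g)=V(f^\#d,f^\#e)=f^*D\times_Y f^*E$ as closed subschemes of $Y$. Since the left-hand side is an effective Cartier divisor by the previous paragraph, so is $f^*D\times_Y f^*E$, which is exactly the statement that $f^*D$ and $f^*E$ admit a universal supremum. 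The equality of divisors then follows from additivity of pullback: applying $f^*$ to $D+E=\sup(D,E)+(D\times_X E)$ gives $f^*(D+E)=f^*\sup(D,E)+f^*(D\times_X E)$, and rearranging together with $\sup(f^*D,f^*E)=f^*D+f^*E-f^*D\times_Y f^*E$ produces $f^*\sup(D,E)=\sup(f^*D,f^*E)$.

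Finally, for the Weil-divisor statement I would pass to codimension one. Let $y\in Y$ have codimension one; then $\mathcal{O}_{Y,y}$ is a discrete valuation ring (here normality is essential) with valuation $v_y$ and uniformizer $\pi$. Set $a=v_y(f^\#d)$ and $b=v_y(f^\#e)$, both finite by the support analysis above. The ideal of $f^*D\times_Y f^*E$ at $y$ is $(f^\#d,f^\#e)=(\pi^a,\pi^b)=(\pi^{\min(a,b)})$, so $f^*(D\times_X E)$ has multiplicity $\min(a,b)$ at $y$, while $f^*D+f^*E$ has multiplicity $a+b$; hence $\sup(f^*D,f^*E)$ has multiplicity $a+b-\min(a,b)=\max(a,b)$ at every codimension-one point. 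Since $\max(a,b)$ is precisely the least integer dominating both $a$ and $b$, this identifies $\sup(f^*D,f^*E)$ with the smallest Weil divisor larger than both $f^*D$ and $f^*E$. The only point requiring care is the bookkeeping in the first two paragraphs, namely that $D\times_X E$ is genuinely locally principal with ideal $(d,e)$ and that this presentation is preserved by $f^\#$; granting this, the remainder is a routine computation in the DVR $\mathcal{O}_{Y,y}$ and poses no real obstacle.
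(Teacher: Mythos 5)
Your proof is correct, but there is nothing in the paper to compare it against: the paper's ``proof'' of this lemma is the single line ``See \cite[Def. 1.10.2, Rem. 1.10.3]{modsheaf1} or \cite[Lem. 2.2.1]{nistopmod}'', i.e.\ it defers entirely to external references. What you have written is a complete, self-contained substitute, and it is the natural argument one would expect those references to contain. Your three steps all check out: (i) the bookkeeping that pullbacks are defined --- since $Y$ is normal, hence reduced with associated points the generic points of its (disjoint) components, the hypothesis $f(T)\not\subset |D|\cup|E|$ guarantees local equations of $D$, $E$, $D\times_X E$ and $\sup(D,E)$ pull back to nonzerodivisors; (ii) the key observation that the scheme-theoretic intersection has ideal sheaf $I_D+I_E$ and that ideal extension along $f^{\#}$ commutes with sums, so $f^\ast(D\times_X E)=f^\ast D\times_Y f^\ast E$, which simultaneously gives the existence of the universal supremum downstairs and, via additivity of pullback applied to $D+E=\sup(D,E)+(D\times_X E)$, the identity $f^\ast\sup(D,E)=\sup(f^\ast D,f^\ast E)$; and (iii) the Weil-divisor statement via the DVR computation $(\pi^a u,\pi^b w)=(\pi^{\min(a,b)})$ at codimension-one points, giving multiplicity $a+b-\min(a,b)=\max(a,b)$, which is exactly the coefficient of the least Weil divisor dominating both. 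This is where normality enters, as you note. The only implicit point is that $\sup(D,E)$ is itself an \emph{effective} Cartier divisor (locally $V(d'e)$ where $d=gd'$), but that is built into Definition \ref{defn:uni-sup} of the paper rather than something your argument needs to establish.
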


\begin{proof}
See \cite[Def. 1.10.2, Rem. 1.10.3]{modsheaf1}  or \cite[Lem. 2.2.1]{nistopmod}.
\end{proof}

\begin{condition}\label{condition:ans}
\ 
\begin{enumerate}
\item $T$ is ambient, i.e., $T \in (\MSm^\fin )^\Sq$.
\item $\ol{T}(ij)$ is normal for each $(ij) \in \Sq$.
\item $\ol{q}_T^\ast D_{10}$ and $\ol{v}_T^\ast D_{01}$ have a universal supremum on $\ol{T}(00)$.
\end{enumerate}
\end{condition}

\begin{lemma}\label{lem:normal-T}
For any $(S \to T_0) \in \Comp (S)$, there exists $(S \to T) \in \Comp (S)$ which dominates $(S \to T_0)$ and such that $T$ satisfies Conditions \ref{condition:ans}.
\end{lemma}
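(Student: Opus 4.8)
The plan is to dominate $T_0$ in three successive steps, achieving the three parts of Condition \ref{condition:ans} one at a time. Each modification will be arranged to be an isomorphism over the interiors — indeed over the whole of the dense open subsets $\ol{S}(ij)\subset\ol{T}_0(ij)$ — so that Lemma \ref{lem:blow-up-lift} produces a dominating object of $\Comp(S)$; the cofilteredness of $\Comp(S)$ (Proposition \ref{p1}) is what lets me assemble the vertexwise modifications into morphisms of squares. First I would make the square ambient. The key observation is that, since $S$ is an $\ulMVfin$-square, each of its structure maps $\ol{u}_S,\ol{v}_S,\ol{p}_S,\ol{q}_S$ is already a \emph{morphism} of ambient spaces, defined on all of $\ol{S}(ij)$. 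Hence each rational map $\ol{T}_0(ij)\dashrightarrow\ol{T}_0(kl)$ coming from a structure map of $T_0$ agrees with a genuine morphism on the open dense $\ol{S}(ij)$, so its locus of indeterminacy is contained in the boundary $\ol{T}_0(ij)-\ol{S}(ij)$. Resolving these indeterminacies from the top of the square downward — first the maps $\ol{T}(10)\to\ol{T}(11)$ and $\ol{T}(01)\to\ol{T}(11)$, then the two maps out of $\ol{T}(00)$ — by blow-ups whose centers $F$ lie in the boundary, and lifting each through Lemma \ref{lem:blow-up-lift}(1) (its hypothesis $F\cap\ol{S}(ij)=\emptyset$ being automatic), produces $(S\to T_1)\in\Comp(S)$ dominating $T_0$ with $T_1\in(\MSm^\fin)^\Sq$, which is Condition \ref{condition:ans}(1).

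Second, I would arrange the universal supremum. With $T_1$ ambient, the divisors $A:=\ol{q}_{T_1}^\ast D_{10}$ and $B:=\ol{v}_{T_1}^\ast D_{01}$ on $\ol{T}_1(00)$ are supported in the boundary, because $\ol{q}_{T_1},\ol{v}_{T_1}$ carry $\ol{S}(00)$ into $\ol{S}(10),\ol{S}(01)$, off the supports of $D_{10},D_{01}$. Blowing up $\ol{T}_1(00)$ along the scheme-theoretic intersection $A\times_{\ol{T}_1(00)}B$ makes the inverse-image ideal $I_A+I_B$ invertible, so that the total transforms of $A$ and $B$ acquire a universal supremum in the sense of Definition \ref{defn:uni-sup}. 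Since the center lies in the boundary, Lemma \ref{lem:blow-up-lift}(1) lifts this to a dominating object of $\Comp(S(00))$; redefining the square by this single modification of the vertex $(00)$ — replacing $\ol{q}_{T_1},\ol{v}_{T_1}$ by their composites with the blow-down, which keeps all four maps ambient and the square commutative — yields $(S\to T_2)\in\Comp(S)$ dominating $T_1$, still ambient, and such that $\ol{q}_{T_2}^\ast D_{10}$ and $\ol{v}_{T_2}^\ast D_{01}$ admit a universal supremum. This is Condition \ref{condition:ans}(3).

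Finally, I would normalize. Applying the normalized blow-up of Definition \ref{def:blow-up-mod} along the empty center to each vertex, exactly as in Corollary \ref{cor:pc-normal}, and using that $S$ is normal, Lemma \ref{lem:blow-up-lift}(2) lifts the inclusions $\ol{S}(ij)\hookrightarrow\ol{T}_2(ij)^N$ to dominating objects of $\Comp(S(ij))$; functoriality of normalization for the (dominant) structure maps assembles these into a square $(S\to T)\in\Comp(S)$ with every $\ol{T}(ij)$ normal, giving Condition \ref{condition:ans}(2). It then remains to check that this last step destroys neither previous condition: ambientness survives because normalization is functorial, and the universal supremum survives because each $\ol{T}(ij)^N\to\ol{T}_2(ij)$ is a morphism from a normal scheme none of whose components lies in the relevant boundary supports, so the pull-back compatibility of universal suprema (the lemma preceding Condition \ref{condition:ans}) applies. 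Hence $T$ dominates $T_0$ and satisfies all of Condition \ref{condition:ans}.

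I expect the first step, ambientness, to be the main obstacle. The whole argument hinges on verifying that the indeterminacy loci of the structure maps are genuinely confined to the boundary — which in turn rests on $S$ being an $\ulMVfin$-square, so that its ambient structure extends over all of $\ol{S}(ij)$ and not merely over the interiors — and on choosing the successive boundary blow-ups compatibly across the square, for which the cofiltered and ordered structure of $\Comp(S)$ from Proposition \ref{p1} is the essential bookkeeping device.
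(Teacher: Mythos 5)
Your proof is correct and takes essentially the same route as the paper's: the graph trick (which you realize as boundary blow-ups combined with Lemma \ref{lem:blow-up-lift}) to achieve ambientness, a blow-up along the scheme-theoretic intersection $\ol{q}_{T_1}^\ast D_{10} \times_{\ol{T}_1(00)} \ol{v}_{T_1}^\ast D_{01}$ to create the universal supremum, and normalization (legitimized by Lemma \ref{lem:blow-up-lift}~(2) and the normality of $S$) for Condition~(2). The only difference is packaging: the paper performs your second and third steps at the vertex $(00)$ in one stroke, as the normalized blow-up $\Bl_F(T_1(00))^N$ of Definition \ref{def:blow-up-mod} (with plain normalization at the other vertices), which makes your final check that normalization preserves the universal supremum unnecessary.
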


\begin{proof}
By the graph trick, there exists $(S \to T_1 ) \in \Comp (S)$ which dominates $(S \to T_0) \in \Comp (S)$, such that $T_1 \in (\MSm^\fin )^\Sq$.

Set $F := \ol{q}_{T_1}^\ast D_{10} \times_{\ol{T}(00)} \ol{v}_{T_1}^\ast D_{01}$, and 
let
\[
T(00) := \Bl_F (T_1 (00))^N \to T_1 (00)
\]
be the normalized blow up of $T_1 (00)$ along $F$ (see Definition \ref{def:blow-up-mod}).
Since $F \cap \ol{S}(00) = \emptyset$ by construction, and since $\ol{S}(00)$ is normal by assumption, Lemma \ref{lem:blow-up-lift} (2) shows that the open immersion $\ol{S}(00) \to \ol{T}_1 (00)$ lifts uniquely to an open immersion $\ol{S}(00) \to \ol{T}(00)$, which defines an object $(S(00) \to T(00)) \in \Comp (S(11))$. 

For $(ij) \in \Sq - \{(00)\}$, define
\[
T(ij) := (\ol{T}_1 (ij)^N, T(ij)^\infty \times_{\ol{T}(ij)} \ol{T}(ij)^N).
\]

Then, for each $(ij) \in \Sq - \{(00)\}$, the morphism $S(ij) \to T_1 (ij)$ lifts uniquely to an object $S(ij) \to T(ij)$ by Lemma \ref{lem:blow-up-lift} (2), noting that the normalization is the normalized blow-up along the closed subset $\emptyset$.
Moreover, for each $(ij) \to (kl)$ in $\Sq$, the morphism $\ol{T}_1 (ij) \to \ol{T}_1 (kl)$ lifts to a morphism $\ol{T}(ij) \to \ol{T}(kl)$ by the universal property of normalization. 
Therefore, $T$ is ambient. 

The other conditions in Conditions \ref{condition:ans} hold by construction.
 \end{proof}

In the rest of this subsection, we fix $(S\to T)\in \Comp(S)$ verifying Conditions \ref{condition:ans}. Set
\[
D := \sup (\ol{q}_T^\ast D_{10}, \ol{v}_T^\ast D_{01}),
\]
where the notation on the right hand side is from Definition \ref{defn:uni-sup}.
Then $D$ is an effective Cartier divisor on $\ol{T}(00)$ by assumption. 

\begin{lemma}\label{lem:foc}
$|D| = |D_{00}| = \ol{T}(00) - \ol{S}(00)$.
\end{lemma}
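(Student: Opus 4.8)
The plan is to show equality of the two supports $|D|$ and $|D_{00}|=\ol{T}(00)-\ol{S}(00)$ by decomposing the first support using the definition of $\sup$ and then identifying the resulting union with the complement of $\ol{S}(00)$. First I would recall that by Definition \ref{defn:uni-sup}, $D=\sup(\ol{q}_T^\ast D_{10},\ol{v}_T^\ast D_{01})$ is the smallest effective Cartier divisor dominating both $\ol{q}_T^\ast D_{10}$ and $\ol{v}_T^\ast D_{01}$; since $\ol{T}(00)$ is normal by Condition \ref{condition:ans}(2), the final clause of the preceding lemma identifies $D$ (as a Weil divisor) with the smallest Weil divisor larger than both pullbacks. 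The key support identity to extract is therefore
\[
|D|=|\ol{q}_T^\ast D_{10}|\cup|\ol{v}_T^\ast D_{01}|,
\]
which holds because the support of a supremum of effective divisors is the union of their supports.

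Next I would compute each of the two supports on the right. Since $|D_{10}|=\ol{T}(10)-\ol{S}(10)$ and $S\to T$ is a morphism in $\Comp(S)$, the square $\ol{S}$ is an elementary Nisnevich square, so the complement $\ol{T}(10)-\ol{S}(10)$ pulls back along the \'etale map $\ol{q}_T:\ol{T}(00)\to\ol{T}(10)$ to $\ol{T}(00)-\ol{S}(00)$; indeed, by condition (d) of Definition \ref{def:partial-comp} (equivalently, because $S\to T$ lies in $\Comp(S)$ and the interiors are preserved) we have $\ol{S}(00)=\ol{q}_T^{-1}(\ol{S}(10))\cap\ol{S}(00)$-style compatibility, giving $|\ol{q}_T^\ast D_{10}|=\ol{q}_T^{-1}(|D_{10}|)=\ol{T}(00)-\ol{S}(00)$. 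An entirely parallel computation with $\ol{v}_T:\ol{T}(00)\to\ol{T}(01)$ and $|D_{01}|=\ol{T}(01)-\ol{S}(01)$ yields $|\ol{v}_T^\ast D_{01}|=\ol{T}(00)-\ol{S}(00)$ as well. Taking the union gives exactly $\ol{T}(00)-\ol{S}(00)=|D_{00}|$, completing the chain of equalities.

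The step I expect to be the main obstacle is the identity $|\ol{q}_T^\ast D_{10}|=\ol{q}_T^{-1}(|D_{10}|)$ together with the assertion that this pullback support is precisely $\ol{T}(00)-\ol{S}(00)$. The first half requires that $\ol{q}_T$ not map any component of $\ol{T}(00)$ entirely into $|D_{10}|$, so that pullback of the Cartier divisor does not pick up spurious components or vanish; this is guaranteed because $\ol{q}_T$ is \'etale (flat) and $\ol{S}(00)$ is dense in $\ol{T}(00)$, so no component lies over the boundary. The second half is where the Nisnevich square structure of $S$ is essential: I would use that $S\to T$ restricts to isomorphisms on interiors (so $\ol{S}(ij)=\ol{q}_T^{-1}(\ol{S}(10))$ scheme-theoretically at the level of ambient spaces, via the cartesian-square property of the elementary Nisnevich square $\ol{S}$ together with condition (d) of partial compactifications), whence $\ol{q}_T^{-1}(\ol{T}(10)-\ol{S}(10))=\ol{T}(00)-\ol{S}(00)$. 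I would verify this set-theoretic complement identity carefully, possibly invoking Lemma \ref{lem:no-extra-fiber} to rule out extra fibers over the open interior, and that is the only place where genuine geometric input beyond formal support-of-sup manipulations is needed.
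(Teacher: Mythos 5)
Your reduction of the lemma to the support identity $|D|=|\ol{q}_T^\ast D_{10}|\cup|\ol{v}_T^\ast D_{01}|$ agrees with the paper's (implicit) first step, but the next step contains a genuine error: for a general $(S\to T)\in\Comp(S)$ satisfying Conditions \ref{condition:ans}, it is \emph{not} true that each of the two supports separately equals $\ol{T}(00)-\ol{S}(00)$; only their union does. Your justification conflates objects of $\Comp(S)$ with partial compactifications: condition (d) of Definition \ref{def:partial-comp} is part of the definition of a partial compactification and is simply not available for an object of $\Comp(S)$, and $\ol{q}_T$ is not \'etale --- only $\ol{q}_S$ is. The compactified square $\ol{T}$ is in general neither Nisnevich nor cartesian (this is exactly the defect that Section \ref{s3} is designed to repair), and Lemma \ref{lem:no-extra-fiber} cannot rescue the claim either, since it requires $\ol{S}(00)$ to be proper over its image, whereas $\ol{q}_S$ is merely \'etale.

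Here is a counterexample to your intermediate claim. Let $S$ be the Zariski square with $S(11)=S(01)=(\A^2,\emptyset)$, $S(10)=S(00)=(\A^2-\{0\},\emptyset)$, vertical maps the identities, horizontal maps the open inclusions. Let $L_\infty\subset\P^2$ be the line at infinity, $\pi:\Bl_0(\P^2)\to\P^2$ the blow-up at the origin with exceptional divisor $E$ (we write $L_\infty$ also for its preimage in $\Bl_0(\P^2)$, which is disjoint from $E$), and set $T(11)=T(01)=(\P^2,L_\infty)$, $T(10)=T(00)=(\Bl_0(\P^2),L_\infty+E)$, with $\ol{p}_T,\ol{q}_T$ the identities and $\ol{u}_T,\ol{v}_T$ equal to $\pi$. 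Then $(S\to T)\in\Comp(S)$ and Conditions \ref{condition:ans} hold (with $D_{10}=L_\infty+E$, $D_{01}=L_\infty$), but $\ol{v}_T^\ast D_{01}=\pi^\ast L_\infty=L_\infty$ has support $L_\infty$, which misses $E\subset\ol{T}(00)-\ol{S}(00)$; equivalently, $\ol{v}_T^{-1}(\ol{S}(01))=\Bl_0(\A^2)$ strictly contains $\ol{S}(00)=\A^2-\{0\}$. So your ``entirely parallel computation'' for $\ol{v}_T$ fails, and with it the chain of equalities. What is true, and what the paper proves, is the intersection statement $\ol{q}_T^{-1}(\ol{S}(10))\cap\ol{v}_T^{-1}(\ol{S}(01))=\ol{S}(00)$: the universal property of the fiber product $\ol{S}(00)=\ol{S}(10)\times_{\ol{S}(11)}\ol{S}(01)$ (cartesianness of $\ol{S}$, not of $\ol{T}$, is the geometric input) produces a morphism from the left-hand side to $\ol{S}(00)$ compatible with the projections, which one then checks is inverse to the obvious inclusion by restricting to the dense open subset $S^\o(00)=T^\o(00)$.
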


\begin{proof}
The second equality is by definition. 
To show the first one, taking the complements of both sides, we are reduced to proving $\ol{S}(00) = \ol{q}_T^{-1} (\ol{S}(10)) \cap \ol{v}_T^{-1} (\ol{S}(01))$. 
The inclusion $\ol{S}(00) \subset \ol{q}_T^{-1} (\ol{S}(10)) \cap \ol{v}_T^{-1} (\ol{S}(01))$ is obvious.
By the universal property of fiber product, there exists a unique morphism $\ol{q}_T^{-1} (\ol{S}(10)) \cap \ol{v}_T^{-1} (\ol{S}(01)) \to \ol{S}(10) \times_{\ol{S}(11)} \ol{S}(01) = \ol{S}(00)$ which is compatible with $\ol{q}_S$ and $\ol{v}_S$. 
We can check that this map is inverse to the inclusion map, by restricting them to the dense open subset $S^\o (00) = T^\o (00)$. This finishes the proof. 
\end{proof}

\begin{lemma}\label{lem:bound-DD}
There exists a positive integer $n_T$ such that for any $n \geq n_T$, we have 
\begin{equation*}\label{eq:bound-DD}
T(00)^\infty \leq \ol{d}^\ast T(11)^\infty + n D.
\end{equation*}
\end{lemma}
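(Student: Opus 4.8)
The plan is to rewrite the desired inequality as an effectivity statement and reduce to Lemma~\ref{lem:increasing}. Set $\mathcal{D} := \ol{d}^\ast T(11)^\infty - T(00)^\infty$, viewed as a (a priori non-effective) Cartier divisor on $\ol{T}(00)$; note that $\ol{T}(00)$ is quasi-compact since $T(00)\in\MSm$, so $\ol{T}(00)$ is proper over $k$. With this notation the claimed inequality $T(00)^\infty \le \ol{d}^\ast T(11)^\infty + nD$ is precisely the effectivity of $\mathcal{D}+nD$. So I would invoke Lemma~\ref{lem:increasing} with $X=\ol{T}(00)$, with $E=D$ (an effective Cartier divisor on $\ol{T}(00)$ by Condition~\ref{condition:ans}(3)), and with the lemma's divisor equal to $\mathcal{D}$. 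This yields the required $n_T$ as soon as its single hypothesis holds: that the restriction of $\mathcal{D}$ to the open subset $\ol{T}(00)\setminus |D|$ is effective.

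By Lemma~\ref{lem:foc} we have $\ol{T}(00)\setminus |D| = \ol{S}(00)$, so the whole matter reduces to computing $\mathcal{D}|_{\ol{S}(00)}$, and I claim this restriction is in fact zero (hence effective). The point is to track the minimality conditions. First, since $S\to T$ is a morphism of squares respecting the open immersions, the composite of $\ol{S}(00)\hookrightarrow\ol{T}(00)$ with $\ol{d}$ factors through $\ol{S}(11)\hookrightarrow\ol{T}(11)$ and agrees there with $\ol{d}_S:\ol{S}(00)\to\ol{S}(11)$. Therefore $\bigl(\ol{d}^\ast T(11)^\infty\bigr)\big|_{\ol{S}(00)} = \ol{d}_S^\ast\bigl(T(11)^\infty|_{\ol{S}(11)}\bigr)$. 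Now $S(11)\to T(11)$ is minimal (it lies in $\Comp(S(11))$), so $T(11)^\infty|_{\ol{S}(11)} = S(11)^\infty$; and $\ol{d}_S$ is minimal, being a composite of minimal edges of the $\ulMVfin$-square $S$, so $\ol{d}_S^\ast S(11)^\infty = S(00)^\infty$. On the other hand $S(00)\to T(00)$ is minimal, giving $T(00)^\infty|_{\ol{S}(00)} = S(00)^\infty$. Combining the two computations, both summands of $\mathcal{D}$ restrict to $S(00)^\infty$ on $\ol{S}(00)$, whence $\mathcal{D}|_{\ol{S}(00)}=0$.

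With the hypothesis of Lemma~\ref{lem:increasing} verified, the lemma produces an integer $n_T\ge 1$ such that $\mathcal{D}+nD \ge 0$ for all $n\ge n_T$, which is exactly the assertion $T(00)^\infty \le \ol{d}^\ast T(11)^\infty + nD$. The main obstacle is really just the middle step — the identification $\mathcal{D}|_{\ol{S}(00)}=0$ — since everything else is a formal reduction; that step is elementary but requires care, as it rests on chaining together three distinct minimality statements (for $S(00)\to T(00)$, for $S(11)\to T(11)$, and for the diagonal $\ol{d}_S$) together with the compatibility of $\ol{d}$ with the embedding of $S$ into $T$.
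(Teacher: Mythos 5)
Your proposal is correct and follows essentially the same route as the paper's proof: both arguments restrict to $\ol{S}(00)$ and use the minimality of $S(00)\to T(00)$ and of the composite $S(00)\to S(11)\to T(11)$ to show that $T(00)^\infty$ and $\ol{d}^\ast T(11)^\infty$ both restrict to $S(00)^\infty$ there, then conclude by Lemma~\ref{lem:foc} and Lemma~\ref{lem:increasing}. Your write-up merely makes explicit the chain of minimality identifications that the paper compresses into one sentence.
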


\begin{proof}
We have $T(00)^\infty |_{\ol{S}(00)} = \ol{d}^\ast T(11)^\infty |_{\ol{S}(00)} = S(00)^\infty$ by the minimality of $S(00) \to T(00)$ and $S(00) \to S(11) \cong T(11)$.
Therefore, in view of Lemma \ref{lem:foc} and Lemma \ref{lem:increasing}, we finish the proof.
\end{proof}

For any non-negative integers $m$ and $n$, define $T_{m,n} \in (\MSm^\fin )^\Sq$ by
\begin{align*}
T_{m,n}(11) &:= T(11) \\
T_{m,n} (10) &:= (\ol{T}(10), T(10)^\infty + mD_{10})  \\
T_{m,n} (01) &:= (\ol{T}(01), T(01)^\infty + nD_{01})  \\
T_{m,n}(00) &:= T_{m,n} (10) \ctimes_{T_{m,n}(11)} T_{m,n} (01),
\end{align*}
where $\ctimes$ denotes the canonical model of fiber product introduced in \cite[Def. 2.2.2]{nistopmod}.
Note that for each $(ij) \in \Sq - \{(00)\}$, the open immersion $\ol{S}(ij) \to \ol{T}(ij)$ induces an object $(S(ij) \to T_{m,n}(ij)) \in \Comp (S(ij))$.
Moreover, it is easy to see by construction of canonical model of fiber product that the morphism $S(00) \to T_{m,n}(00)$ in $\ulMSm$, which is induced by the universal property of fiber product, is ambient and minimal, and defines an object $(S(00) \to T_{m,n}(00)) \in \Comp (S(00))$.

Therefore, we obtain $(S \to T_{m,n}) \in \Comp (S)$ for any $m,n$.
Write 
\[\xymatrix{
T_{m,n}(00) \ar[r]^{v_{T_{m,n}}} \ar[d]_{q_{T_{m,n}}} \ar[dr]^{d_{T_{m,n}}} & T_{m,n}(01) \ar[d]^{p_{T_{m,n}}} \\
T_{m,n}(10) \ar[r]^{u_{T_{m,n}}} & T_{m,n}(11).
}\]

Note that $T_{m,n}$ is cartesian in $\MSm$ for any $m,n$ by construction. 

\subsection{Cofinality of cartesian squares}

\begin{prop}\label{prop:cofinality-pullback} Let $(S\to T)\in \Comp(S)$ verifying Conditions \ref{condition:ans}. Let $n_T$ be as in Lemma \ref{lem:bound-DD}, and $T_{m,n}$ as constructed above. Then for any integers $m,n \geq n_T$, there exists a morphism $(S \to T_{m,n}) \to (S \to T)$ in $\Comp (S)$.
\end{prop}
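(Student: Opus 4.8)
The plan is to produce the required morphism $f\colon T_{m,n}\to T$ in $\MSm^\Sq$ as ``the identity on interiors''. Since both $(S\to T)$ and $(S\to T_{m,n})$ lie in $\Comp(S)$, all structure maps are isomorphisms on interiors; moreover, passing from $T$ to $T_{m,n}$ only enlarges the divisors along the complements of the $\ol S(ij)$, which lie outside the interiors, so $T_{m,n}(ij)^\o=S(ij)^\o=T(ij)^\o$ for every $(ij)\in\Sq$. Thus the only natural candidate for $f$ is the identity $T_{m,n}^\o\iso T^\o$, and $f\circ(S\to T_{m,n})=(S\to T)$ forces this choice. Because a morphism of modulus pairs is determined by its restriction to interiors, commutativity of $f$ and its compatibility with $S$ are automatic; what remains is to check that for each $(ij)$ the identity on $T(ij)^\o$ underlies a morphism $T_{m,n}(ij)\to T(ij)$ in $\MSm$, i.e.\ is admissible. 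For $(ij)\neq(00)$ this is immediate: the ambient spaces agree and $T_{m,n}(ij)^\infty\ge T(ij)^\infty$, so the identity on the common ambient space is admissible.

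The whole content is at the vertex $(00)$. Here, by the definition of the canonical fibre product $\ctimes$ in \cite[Def.\ 2.2.2]{nistopmod}, $\ol T_{m,n}(00)$ is the normalization of the closure of $T(00)^\o$ inside $\ol T(10)\times_{\ol T(11)}\ol T(01)$; in particular it is independent of $m,n$, and I call it $\ol P$, with projections $\pi_{10},\pi_{01}$. Since $T$ is ambient (Condition \ref{condition:ans}(1)), there is a morphism $\ol T(00)\to\ol T(10)\times_{\ol T(11)}\ol T(01)$ which is birational onto that closure; as $\ol T(00)$ is proper (because $T\in(\MSm^\fin)^\Sq$) and normal (Condition \ref{condition:ans}(2)), the universal property of normalization yields a proper birational $g\colon\ol T(00)\to\ol P$ with $\pi_{10}g=\ol q_T$, $\pi_{01}g=\ol v_T$ and $g|_{T(00)^\o}=\id$. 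The graph of the rational map $\ol P\dashrightarrow\ol T(00)$ defining $f(00)$ is the graph of $g$, which is isomorphic to the normal scheme $\ol T(00)$; hence in the admissibility diagram $\ol P\xleftarrow{a}\Gamma^N\xrightarrow{b}\ol T(00)$ I may take $\Gamma^N=\ol T(00)$, $a=g$ (proper) and $b=\id$. Thus admissibility of $f(00)$ reduces to the single inequality
\[
g^\ast T_{m,n}(00)^\infty\ \ge\ T(00)^\infty\qquad\text{on }\ol T(00).
\]

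To prove it, I would first compute the left-hand side. By the definition of $\ctimes$, $T_{m,n}(00)^\infty=\sup\!\big(\pi_{10}^\ast T_{m,n}(10)^\infty,\ \pi_{01}^\ast T_{m,n}(01)^\infty\big)$, and since $\ol T(00)$ is normal while the dense $T(00)^\o$ maps into the interior of $\ol P$, no irreducible component of $\ol T(00)$ is carried by $g$ into the relevant divisor supports, so the lemma following Definition \ref{defn:uni-sup} allows $g^\ast$ to commute with $\sup$. Using $\pi_{10}g=\ol q_T$ and $\pi_{01}g=\ol v_T$ this gives
\[
g^\ast T_{m,n}(00)^\infty=\sup\!\big(\ol q_T^\ast(T(10)^\infty+mD_{10}),\ \ol v_T^\ast(T(01)^\infty+nD_{01})\big).
\]
Now $u_Tq_T=d_T=p_Tv_T$, so admissibility of the ambient edges $u_T,p_T$ gives $\ol q_T^\ast T(10)^\infty\ge\ol d^\ast T(11)^\infty$ and $\ol v_T^\ast T(01)^\infty\ge\ol d^\ast T(11)^\infty$; writing $E_1:=\ol q_T^\ast D_{10}$, $E_2:=\ol v_T^\ast D_{01}$ (so $D=\sup(E_1,E_2)$) and using $\sup(X+A,X+B)=X+\sup(A,B)$, I obtain
\[
g^\ast T_{m,n}(00)^\infty\ \ge\ \ol d^\ast T(11)^\infty+\sup(mE_1,nE_2).
\]
Comparing multiplicities componentwise, $m,n\ge n_T$ forces $\sup(mE_1,nE_2)\ge n_T\sup(E_1,E_2)=n_T D$, whence $g^\ast T_{m,n}(00)^\infty\ge\ol d^\ast T(11)^\infty+n_T D\ge T(00)^\infty$ by Lemma \ref{lem:bound-DD}, as required.

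The main obstacle is entirely at the vertex $(00)$: one must correctly unwind the canonical model $\ctimes$ (ambient space $=$ normalization of the closure of the interior, divisor $=$ the universal supremum), verify the hypotheses of the supremum lemma so that $g^\ast$ commutes with $\sup$, and then match the elementary but slightly delicate multiplicity estimate $\sup(mE_1,nE_2)\ge n_T D$ to the constant $n_T$ supplied by Lemma \ref{lem:bound-DD}. The other vertices and all the square and compatibility conditions are formal once one observes that $f$ is the identity on interiors.
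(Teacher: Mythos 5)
Your overall strategy is the paper's: the morphism is the identity on interiors, the vertices $(ij)\neq(00)$ are immediate, and your multiplicity estimate at $(00)$ (admissibility of $u_T,p_T$, monotonicity of $\sup$ on a normal scheme, $\sup(mE_1,nE_2)\ge n_T\sup(E_1,E_2)$, then Lemma \ref{lem:bound-DD}) is exactly the computation in the paper. The gap lies in the one place where you deviate: the construction of $g\colon \ol{T}(00)\to\ol{T}_{m,n}(00)$, which rests on the claim that the canonical model $\ctimes$ has ambient space ``the normalization of the closure of $T(00)^\o$ in $\ol{T}(10)\times_{\ol{T}(11)}\ol{T}(01)$'', independent of $m,n$. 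That description cannot be correct. The modulus of the canonical model is by definition the \emph{universal} supremum $\sup(\ol{q}_{T_{m,n}}^\ast T_{m,n}(10)^\infty,\ol{v}_{T_{m,n}}^\ast T_{m,n}(01)^\infty)$, and a universal supremum requires the scheme-theoretic intersection of the two pulled-back divisors to be Cartier (Definition \ref{defn:uni-sup}); normality does not give this. This is precisely why Condition \ref{condition:ans}(3) is a nontrivial hypothesis and why Lemma \ref{lem:normal-T} and \S\ref{s3.4} must blow up such intersections. Accordingly, in \cite[Def.~2.2.2]{nistopmod} the ambient space of $\ctimes$ is obtained from the closure by blowing up the intersection of the pullbacks of the two moduli $T(10)^\infty+mD_{10}$ and $T(01)^\infty+nD_{01}$; this center is non-reduced and genuinely depends on $m,n$.

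With the correct description, $g$ does not come for free: by the universal property of blow-ups, lifting the ambient map $\ol{T}(00)\to\ol{T}(10)\times_{\ol{T}(11)}\ol{T}(01)$ through $\ol{T}_{m,n}(00)$ requires $\ol{q}_T^\ast T_{m,n}(10)^\infty\times_{\ol{T}(00)}\ol{v}_T^\ast T_{m,n}(01)^\infty$ to be Cartier on $\ol{T}(00)$. Conditions \ref{condition:ans} give only that $\ol{q}_T^\ast D_{10}\times_{\ol{T}(00)}\ol{v}_T^\ast D_{01}$ is Cartier, and Cartier-ness of one intersection implies nothing about the other (adding divisors to the two factors changes the intersection scheme). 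So the identification $\Gamma^N=\ol{T}(00)$, $a=g$, $b=\id$ is unjustified, and with it your reduction of admissibility to an inequality on $\ol{T}(00)$. The repair is exactly what the paper does: do not insist that the rational map $\ol{T}_{m,n}(00)\dashrightarrow\ol{T}(00)$ be resolved on $\ol{T}(00)$ itself; take its graph $\Gamma$ and the normalization $\Gamma^N$, giving $\ol{T}_{m,n}(00)\xleftarrow{a}\Gamma^N\xrightarrow{b}\ol{T}(00)$ with $a$ proper, which is all that admissibility asks for, and transport your (correct) chain of inequalities to $\Gamma^N$ via the lemma following Definition \ref{defn:uni-sup} --- this is the paper's Claim \ref{claim:DDD} followed by the same estimate you wrote.
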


\begin{proof}
For $(ij) \in \Sq - \{(00)\}$ and for any $m,n$, there exists a natural morphism $T_{m,n} (ij) \to T(ij)$ in $\ulMSm^\fin$ by construction. 

Our task is to show that the isomorphism $T_{m,n}(00)^\o \to T(00)^\o$ in $\Sm$ defines a morphism $T_{m,n}(00) \to T(00)$ in $\MSm$ for $m,n \geq n_T$.

Let $\Gamma$ be the graph of the rational map $\ol{T}_{m,n} (00) \dashrightarrow \ol{T}(00)$, and $\Gamma^N \to \Gamma$ the normalization of $\Gamma$. 
Then we obtain the following commutative diagrams of schemes:
\[\xymatrix{
\ol{T}_{m,n} (00) \ar[d]_{\ol{q}_{T_{m,n}}} & \ar[l]_(0.35)a \Gamma^N \ar[r]^(0.4)b &\ol{T}(00) \ar[d]^{\ol{q}_T} \\
\ol{T}_{m,n} (10) \ar[rr]^{=} &  &\ol{T}(10),
}\ \ 
\xymatrix{
\ol{T}_{m,n} (00) \ar[d]_{\ol{v}_{T_{m,n}}} & \ar[l]_(0.35)a \Gamma^N \ar[r]^(0.4)b &\ol{T}(00) \ar[d]^{\ol{v}_T} \\
\ol{T}_{m,n} (01) \ar[rr]^{=} &  &\ol{T}(01).
}
\]

\begin{claim}\label{claim:DDD}
$\sup (a^\ast \ol{q}_{T_{m,n}}^\ast D_{10}, a^\ast \ol{v}_{T_{m,n}}^\ast D_{01}) = b^\ast D$.
\end{claim}

\begin{proof}
We have $\ol{q}_T b = \ol{q}_{T_{m,n}}a$ and $\ol{v}_T b = \ol{v}_{T_{m,n}}a$ by the commutativity of the above diagrams.
Therefore, 
\begin{align*}
\sup (a^\ast \ol{q}_{T_{m,n}}^\ast D_{10}, a^\ast \ol{v}_{T_{m,n}}^\ast D_{01})) 
&= \sup (b^\ast \ol{q}_{T}^\ast D_{10}, b^\ast \ol{v}_{T}^\ast D_{01})) \\
&= b^\ast \sup (\ol{q}_{T}^\ast D_{10}, \ol{v}_{T}^\ast D_{01})) \\
&= b^\ast D.
\end{align*}
This finishes the proof.
\end{proof}

By construction of $T_{m,n}(00)$, we have 
\[
T_{m,n} (00)^\infty = \sup (\ol{q}_{T_{m,n}}^\ast T_{m,n}(10)^\infty, \ol{v}_{T_{m,n}}^\ast T_{m,n}(01)^\infty),
\]
hence
\begin{equation}\label{eq:3.1.1}
a^\ast T_{m,n} (00)^\infty = \sup (a^\ast \ol{q}_{T_{m,n}}^\ast T_{m,n}(10)^\infty, a^\ast \ol{v}_{T_{m,n}}^\ast T_{m,n}(01)^\infty).
\end{equation}
By construction of $T_{m,n}$ and by choice of $m,n$, we have 
\begin{align*}
\ol{u}_{T_{m,n}}^\ast T(11)^\infty + n_T D_{10}  &\leq \ol{u}_{T_{m,n}}^\ast T(11)^\infty + mD_{10} \leq T_{m,n}(10)^\infty ,\\
\ol{p}_{T_{m,n}}^\ast T(11)^\infty + n_T D_{01} &\leq \ol{p}_{T_{m,n}}^\ast T(11)^\infty + nD_{01} \leq T_{m,n}(01)^\infty .
\end{align*}
Combining these with \eqref{eq:3.1.1}, we have
\begin{align*}
a^\ast T_{m,n} (00)^\infty 
&\geq a^\ast \ol{d}_{T_{m,n}}^\ast T(11)^\infty + n_T \sup (a^\ast \ol{q}_{T_{m,n}}^\ast D_{10}, a^\ast \ol{v}_{T_{m,n}}^\ast D_{01}) \\
&= b^\ast (\ol{d}_{T}^\ast T(11)^\infty + n_T D),
\end{align*}
where the last equality follows from Claim \ref{claim:DDD} and $\ol{d}_{T_{m,n}} a = \ol{d}_T b$.
Therefore, in view of Lemma \ref{lem:bound-DD}, we have
\[
a^\ast T_{m,n} (00)^\infty \geq b^\ast T(00)^\infty ,
\]
which shows that the isomorphism $T_{m,n}(00)^\o \to T(00)^\o$ in $\Sm$ defines a morphism $T_{m,n}(00) \to T(00)$ in $\MSm$. This finishes the proof of Proposition \ref{prop:cofinality-pullback}.
\end{proof}

\begin{cor}\label{c3.1} The subset 
\[
\{(S \to T) \in \Comp (S) \ | \ \text{$T \in (\MSm^\fin)^\Sq$,  $T$ is cartesian in $\MSm$}\}
\]
is cofinal in $\Comp (S)$.\qed
\end{cor}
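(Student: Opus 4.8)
The plan is to chain together the two cofinality results established just above. Since $\Comp(S)$ is a cofiltered ordered set by Proposition \ref{p1}, and domination is transitive, it suffices to show that every object of $\Comp(S)$ is dominated by an object of the displayed subset.

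So I would start with an arbitrary $(S \to T_0) \in \Comp(S)$. The first step is to make the square ambient and normal and to arrange the universal-supremum hypothesis: applying Lemma \ref{lem:normal-T} produces $(S \to T) \in \Comp(S)$ which dominates $(S \to T_0)$ and satisfies Conditions \ref{condition:ans}. In particular $T \in (\MSm^\fin)^\Sq$, but at this stage $T$ need not be cartesian in $\MSm$.

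The second step replaces $T$ by one of the cartesian squares $T_{m,n}$ constructed in \S\ref{sssection:cof-gencon}. With $n_T$ the integer furnished by Lemma \ref{lem:bound-DD}, I would take $m = n = n_T$. By construction $T_{n_T,n_T}$ lies in $(\MSm^\fin)^\Sq$ and is cartesian in $\MSm$, while Proposition \ref{prop:cofinality-pullback} (applicable precisely because $T$ verifies Conditions \ref{condition:ans} and $m,n \geq n_T$) provides a morphism $(S \to T_{n_T,n_T}) \to (S \to T)$ in $\Comp(S)$, i.e.\ $T_{n_T,n_T}$ dominates $T$. By transitivity of domination in the ordered set $\Comp(S)$, the object $(S \to T_{n_T,n_T})$ dominates $(S \to T_0)$; since it belongs to the subset, cofinality follows.

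I expect no genuine obstacle here: the corollary is purely the composite of Lemma \ref{lem:normal-T} and Proposition \ref{prop:cofinality-pullback}, whose proofs carry all the real content (the normalized blow-ups, the effectivity estimate of Lemma \ref{lem:bound-DD}, and the admissibility check via universal suprema in Claim \ref{claim:DDD}). The only points to keep straight are the direction of the word ``dominates''—each of the two steps supplies an arrow \emph{into} the previously chosen object, so the source of the composite arrow is the dominating object—and the fact that $\Comp(S)$ being ordered guarantees the composite of the two dominations is again a single domination relation.
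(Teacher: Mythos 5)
Your proof is correct and is exactly the argument the paper intends: the corollary is stated with no written proof precisely because it is the immediate composite of Lemma \ref{lem:normal-T} (dominate $T_0$ by a $T$ satisfying Conditions \ref{condition:ans}) and Proposition \ref{prop:cofinality-pullback} (dominate $T$ by the ambient cartesian square $T_{m,n}$, e.g.\ with $m=n=n_T$), using transitivity in the ordered cofiltered set $\Comp(S)$. You also correctly track the direction of domination (the dominating object is the source of the arrow), so there is nothing to add.
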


\subsection{Topological study of a certain diagram}\label{s3.4}
By Corollary \ref{c3.1}, we may and do assume in Theorem \ref{main:cofinality-MV-partial} that $T \in (\MSm^\fin)^\Sq$ and that $T$ is cartesian in $\MSm$.
Since $T^\o$ is an elementary Nisnevich square, the morphism $\OD (q_T)^\o \to \OD (p_T)^\o$ is an isomorphism in $\Sm$, and it induces an admissible morphism $\OD (q_T) \to \OD (p_T)$.
Our task is to modify $T$ in order to make this morphism invertible in $\MSm$. In this subsection, we prepare the ground to show that we only need to increase multiplicities of divisors, which will be done in the next subsection.

We take the notation of \S \ref{sssection:cof-gencon}.
Let $\Gamma$ be the graph of the birational map $\ol{\OD (q_{T})} \dashrightarrow \ol{\OD (p_{T})}$.
Let $\nu : \Gamma \to \ol{\OD (p_{T})}$ be the natural map, and let $s_{i} : \ol{\OD (p_{T})} \to \ol{T}(01)$ be the natural $i$-th projection for $i=1,2$.

Set $H:=\nu^\ast s_1^\ast D_{01} \times_{\Gamma} \nu^\ast s_2^\ast D_{01}$, and let
\begin{equation}\label{eq:gamma-1}
\pi : \Gamma_1 := \Bl_{H} (\Gamma)^N \to \Gamma
\end{equation}
be the normalized blow-up of $\Gamma$ along $H$, and set $b:=\nu \pi$. 
Then $b^\ast s_1^\ast D_{01}$ and $b^\ast s_2^\ast D_{01}$ have a universal supremum (see Definition \ref{defn:uni-sup}) by construction. 
Set 
\begin{equation}\label{eq:D-prime}
D' := \sup (b^\ast s_1^\ast D_{01}, b^\ast s_2^\ast D_{01} ).
\end{equation}

By \cite[Prop. 2.2.7 (3), Lem. 4.1.4]{nistopmod}, there are natural open immersions $j_q : \ol{\OD (q_{S})} \to \ol{\OD (q_{T})}$ and $j_p : \ol{\OD (p_{S})} \to \ol{\OD (p_{T})}$.
They lift uniquely to open immersions $j'_q : \ol{\OD (q_{S})} \to \Gamma_1$ and $j'_p : \ol{\OD (p_{S})} \to \Gamma_1$, since $\ol{\OD (q_{S})}$ and $\ol{\OD (p_{S})}$ are normal by construction and since $H \cap j_p (\ol{\OD (p_S)}) = H \cap j_q (\ol{\OD (q_S)}) = \emptyset$.

Thus we obtain commutative diagrams of schemes ($i=1,2$)
\begin{equation}\label{eq:diagrams1}\begin{gathered}\xymatrix{
\ol{\OD (q_{S})} \ar[rrrr]^{\sim} \ar[d]_{j_q} \ar[rrd]^{j'_q} & & & & \ar[lld]_{j'_p} \ol{\OD (p_{S})} \ar[d]^{j_p} \\
\ol{\OD (q_{T})} \ar[d]_{t_i} && \ar[ll]_(0.4)a \Gamma_1 \ar[rr]^(0.4)b && \ol{\OD (p_{T})} \ar[d]^{s_i} \\
\ol{T} (00) \ar[rrrr]^{\ol{v}_T} &&&& \ol{T} (01),
}\end{gathered}\end{equation}
 where $t_i$ is the natural $i$-th projection, and $a$ is the composite $\Gamma_1 \to \Gamma \to \ol{\OD (q_{T})}$.

Set 
\begin{equation}\label{eq:U}
U := j'_q (\ol{\OD (q_{S})}) =  j'_p (\ol{\OD (p_{S})}).
\end{equation}

Consider the composite
\[
c :  \ol{\OD (p_T )} \to \ol{T(01) \ctimes_{T(11)} T(01)} \to \ol{T}(11),
\]
where the first morphism is the natural inclusion, and the second is the structural morphism.

\begin{lemma}\label{claim:3.1.9}
$a^\ast \OD (q_{T})^\infty |_U = b^\ast \OD (p_{T})^\infty |_U = b^\ast c^\ast T(11)^\infty |_U$.
\end{lemma}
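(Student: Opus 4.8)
The plan is to restrict all three divisors to $U$ and to transport the whole computation to the off-diagonals of the square $S$, where every edge is minimal. First I would exploit the two upper triangles of the diagram \eqref{eq:diagrams1}: the morphisms $j'_q$ and $j'_p$ are open immersions with common image $U$ (see \eqref{eq:U}), so they identify $\ol{\OD(q_S)} \cong U \cong \ol{\OD(p_S)}$, and the relations $a\circ j'_q = j_q$, $b\circ j'_p = j_p$ show that under these identifications $a|_U$ becomes $j_q$ and $b|_U$ becomes $j_p$. Consequently the three divisors of the statement restrict on $U$ to $j_q^\ast\OD(q_T)^\infty$, $j_p^\ast\OD(p_T)^\infty$ and $(c\circ j_p)^\ast T(11)^\infty$ respectively, and it suffices to prove that these three effective Cartier divisors on the normal scheme $\ol{\OD(q_S)}\cong\ol{\OD(p_S)}$ coincide.

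Next I would unwind the off-diagonal moduli. By the off-diagonal theorem, $\OD(p_T)$ is the non-diagonal open-and-closed part of $T(01)\times_{T(11)}T(01)$, whose modulus is $\sup(s_1^\ast T(01)^\infty, s_2^\ast T(01)^\infty)$; pulling back along $j_p$, using that $S(01)\to T(01)$ is minimal (so $S(01)^\infty$ is the restriction of $T(01)^\infty$) and that $\sup$ commutes with pullback to the normal scheme $\ol{\OD(p_S)}$ (the Lemma after Definition \ref{defn:uni-sup}), gives $j_p^\ast\OD(p_T)^\infty = \OD(p_S)^\infty = \sup((s_1^S)^\ast S(01)^\infty, (s_2^S)^\ast S(01)^\infty)$, where $s_i^S$ are the projections of $\OD(p_S)$ to $S(01)$. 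Now comes the key simplification: since $S$ is an $\ulMV^\fin$-square all of whose edges are minimal (Definition \ref{d1.1}), we have $S(01)^\infty = \ol{p}_S^\ast S(11)^\infty$ and $p_S s_1^S = p_S s_2^S = c_S$, the structural morphism $\ol{\OD(p_S)}\to\ol{S}(11)$; hence both entries of the $\sup$ equal $c_S^\ast S(11)^\infty$ and the $\sup$ collapses to $\OD(p_S)^\infty = c_S^\ast S(11)^\infty$. Finally $c\circ j_p$ factors as $\ol{\OD(p_S)}\xrightarrow{c_S}\ol{S}(11)\hookrightarrow\ol{T}(11)$, and since $S(11)\to T(11)$ is minimal the restriction of $T(11)^\infty$ to $\ol{S}(11)$ is $S(11)^\infty$, so $(c\circ j_p)^\ast T(11)^\infty = c_S^\ast S(11)^\infty$. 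This settles the equality of the second and third divisors.

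Running the identical argument on the $q$-side yields $j_q^\ast\OD(q_T)^\infty = \OD(q_S)^\infty$, and collapsing the $\sup$ via minimality of $q_S$ and then $u_S$ gives $\OD(q_S)^\infty = h^\ast S(11)^\infty$ for the structural morphism $h\colon\ol{\OD(q_S)}\to\ol{S}(11)$. Under the identification $\ol{\OD(q_S)}\cong U\cong\ol{\OD(p_S)}$, which restricts to the canonical identification of interiors coming from $T^\o$ being a Nisnevich square, the maps $h$ and $c_S$ agree: they coincide on the dense interior and $\ol{S}(11)$ is separated. Therefore $h^\ast S(11)^\infty = c_S^\ast S(11)^\infty$, and all three divisors equal $c_S^\ast S(11)^\infty$, which proves the lemma.

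The computation is short; the point demanding care is the bookkeeping of off-diagonal moduli — that the modulus of $\OD(f)$ is the restriction of the $\sup$ of the two projected moduli of the source — together with the crucial collapse of this $\sup$ to a single pullback of $S(11)^\infty$. That collapse is exactly what minimality of every edge of $S$ buys us, since it forces both projections of each off-diagonal to factor through one structural map to $\ol{S}(11)$. Identifying $a|_U, b|_U$ with $j_q, j_p$ through \eqref{eq:diagrams1}, and the compatibility of $\sup$ with pullback (all ambient spaces being normal), are the only remaining inputs and are routine.
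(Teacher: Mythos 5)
Your proof is correct and is essentially the paper's argument: both transport the three divisors to $U\cong\ol{\OD(q_S)}\cong\ol{\OD(p_S)}$ through the triangles of \eqref{eq:diagrams1} and then conclude by minimality of the various maps down to $T(11)$. The only difference is packaging: where the paper directly cites \cite[Prop.~4.1.5]{nistopmod} (the isomorphism $\OD(q_S)\cong\OD(p_S)$ in $\ulMSm^\fin$) and the minimality of $j_q$ and $j_p$, you re-derive these facts by computing both off-diagonal moduli explicitly as $c_S^{\ast}S(11)^\infty$ using the $\sup$-formula for $\OD$ and the minimality of all edges of $S$ --- which is precisely the content of the cited results, so your version is a self-contained expansion of the same proof.
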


\begin{proof}
The natural morphism $\OD (q_S) \to \OD (p_S)$ is an isomorphism in $\ulMSm^\fin$ by \cite[Prop. 4.1.5]{nistopmod}.
Moreover, the morphisms $\OD (q_S) \to \OD (q_{T})$ and $\OD (p_S) \to \OD (p_{T})$ are minimal. 
Therefore, the first equality follows from the commutativity of \eqref{eq:diagrams1}. 
The second assertion follows from the minimality of the natural morphism $\OD (p_S) \to S(10) \ctimes_{S(11)} S(01) \to S(11)  \to T(11)$.
This finishes the proof.
\end{proof}

\begin{lemma}\label{claim:3.1.10}
$\ol{\OD (q_{T})} - j_q (\ol{\OD (q_S)}) = t_1^{-1} \ol{v}_T^{-1} (|D_{01}|) \cup t_2^{-1} \ol{v}_T^{-1} (|D_{01}|)$.
\end{lemma}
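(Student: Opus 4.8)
The plan is to pass to complements inside $\ol{\OD(q_T)}$ and reformulate the asserted identity as the equality of open subsets
\[
j_q(\ol{\OD(q_S)}) = W_q := \{x \in \ol{\OD(q_T)} : \ol{v}_T t_1(x) \in \ol{S}(01) \text{ and } \ol{v}_T t_2(x) \in \ol{S}(01)\},
\]
using $|D_{01}| = \ol{T}(01) - \ol{S}(01)$, so that $\bigcup_i t_i^{-1}\ol{v}_T^{-1}(|D_{01}|) = \ol{\OD(q_T)} - W_q$. First I would dispose of the easy inclusion $j_q(\ol{\OD(q_S)}) \subseteq W_q$: the two projections of the off-diagonal $\ol{\OD(q_S)}$ factor through $\ol{S}(00)$, so $t_i\circ j_q$ lands in $\ol{S}(00)$, and since $\ol{v}_T|_{\ol{S}(00)} = \ol{v}_S$ carries $\ol{S}(00)$ into $\ol{S}(01)$, the composite $\ol{v}_T t_i(j_q(-))$ lands in $\ol{S}(01)$. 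I would record at the same time the analogous $p$-side inclusion $j_p(\ol{\OD(p_S)}) \subseteq W_p := \{z \in \ol{\OD(p_T)} : s_1(z), s_2(z) \in \ol{S}(01)\}$.

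The heart is the reverse inclusion $W_q \subseteq j_q(\ol{\OD(q_S)})$, which I would obtain by transferring the problem to the $p$-side through the graph $\Gamma_1$. The key device is Lemma \ref{lem:no-extra-fiber}. Apply it to the proper birational (hence separated) map $a : \Gamma_1 \to \ol{\OD(q_T)}$ and the open dense subset $U \subset \Gamma_1$ of \eqref{eq:U}: it is dense because it contains the common interior $\OD(q_T)^\o = \OD(q_S)^\o$, which is dense in $\Gamma_1$; moreover $a(U) = j_q(\ol{\OD(q_S)})$ is open and $a|_U : U \to a(U)$ is an isomorphism, since $a j'_q = j_q$ with $j'_q, j_q$ open immersions of images $U$ and $j_q(\ol{\OD(q_S)})$ respectively. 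Lemma \ref{lem:no-extra-fiber} then yields $a^{-1}(j_q(\ol{\OD(q_S)})) = U$, and the identical argument for $b : \Gamma_1 \to \ol{\OD(p_T)}$ gives $b^{-1}(j_p(\ol{\OD(p_S)})) = U$. Consequently a point $x$ lies in $j_q(\ol{\OD(q_S)})$ if and only if some (equivalently every) $a$-preimage $\tilde x$ satisfies $b(\tilde x) \in j_p(\ol{\OD(p_S)})$.

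Granting the $p$-side statement $W_p \subseteq j_p(\ol{\OD(p_S)})$, the hard inclusion closes as follows: for $x \in W_q$ pick $\tilde x \in \Gamma_1$ with $a(\tilde x) = x$ (possible as $a$ is surjective); the commutativity $\ol{v}_T t_i a = s_i b$ from \eqref{eq:diagrams1} gives $s_i(b(\tilde x)) = \ol{v}_T t_i(x) \in \ol{S}(01)$, so $b(\tilde x) \in W_p \subseteq j_p(\ol{\OD(p_S)})$; by the previous paragraph $\tilde x \in U$, whence $x = a(\tilde x) \in a(U) = j_q(\ol{\OD(q_S)})$. It remains to prove $W_p \subseteq j_p(\ol{\OD(p_S)})$, and here I would exploit that $\OD(p_S)$ and $\OD(p_T)$ are off-diagonals over the \emph{same} base $\ol{S}(11) = \ol{T}(11)$ (recall $S(11) = T(11)$ in the partially compact case, since $\ol{S}(11)$ is proper), with $\ol{S}(01) \hookrightarrow \ol{T}(01)$ an open immersion. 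As $\ol{S}(01) \times_{\ol{T}(11)} \ol{S}(01)$ is then open in $\ol{T}(01) \times_{\ol{T}(11)} \ol{T}(01)$, I would identify $\ol{\OD(p_S)}$ with the open base change $(s_1,s_2)^{-1}(\ol{S}(01) \times_{\ol{T}(11)} \ol{S}(01)) = W_p$, extracting this compatibility from the construction of off-diagonals in \cite[Prop. 2.2.7, Lem. 4.1.4]{nistopmod}.

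The main obstacle I anticipate is precisely this last identification: checking that $j_p$ realizes $\ol{\OD(p_S)}$ as the \emph{full} preimage $W_p$ and not merely a subset, which forces one to unwind the canonical-model and normalization definition of the off-diagonal in \cite{nistopmod} and verify its compatibility with open base change in the fixed variable. By contrast, the reason the $q$-side resists a direct treatment — and the conceptual content of the lemma — is that for $q$ the base genuinely changes, $\ol{S}(10) \subsetneq \ol{T}(10)$, so the naive description of $j_q(\ol{\OD(q_S)})$ by $t_i(x) \in \ol{S}(00)$ is not manifestly $W_q$; the detour through $\Gamma_1$ and the identification $U = j'_q(\ol{\OD(q_S)}) = j'_p(\ol{\OD(p_S)})$ is exactly what converts the tractable $p$-side statement into the desired one.
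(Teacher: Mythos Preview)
Your route through $\Gamma_1$ is a genuine detour from the paper's argument. The paper works directly on the $q$-side: it sets $A := W_q$, and from the two maps $\ol{v}_T t_i|_A : A \to \ol{S}(01)$ (which agree over $\ol{S}(11) = \ol{T}(11)$) obtains by the universal property a morphism $\gamma : A \to \ol{S}(01) \times_{\ol{S}(11)} \ol{S}(01)$. A short density argument (their Claim~\ref{claim:4.2.10}: $A$ lies in the closure $\ol{\OD(q_T)}$ of $\OD(q_T^\o)$, and $\OD(q_T^\o)$ is disjoint from the closed set $\Delta(\ol{S}(01))$) shows $\gamma(A) \cap \Delta(\ol{S}(01)) = \emptyset$. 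Since $\ol{p}_S$ is \'etale, the complement of $\Delta$ in $\ol{S}(01) \times_{\ol{S}(11)} \ol{S}(01)$ is exactly $\ol{\OD(p_S)} \cong \ol{\OD(q_S)}$, so $\gamma$ yields a map $A \to j_q(\ol{\OD(q_S)})$ which is checked to be inverse to the inclusion by comparing on the dense interior. There is no passage through $\Gamma_1$, $U$, or $\ol{\OD(p_T)}$.

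Your plan is technically sound up to the deferred identification $W_p = j_p(\ol{\OD(p_S)})$, which you rightly flag as the obstacle. But this step carries exactly the content you have postponed: establishing it forces you either to unwind how the closure-and-normalization construction of the canonical model restricts over the open subset $\ol{S}(01) \times_{\ol{T}(11)} \ol{S}(01)$, or to run a density-and-diagonal argument on $W_p$ parallel to the one the paper runs on $A$. So the transfer through $\Gamma_1$ and Lemma~\ref{lem:no-extra-fiber} buys nothing. The point you are missing is that one can map $A$ \emph{directly} into the $S$-level scheme $\ol{S}(01) \times_{\ol{S}(11)} \ol{S}(01)$, where the \'etaleness of $\ol{p}_S$ makes the off-diagonal transparent; there is no need ever to touch $\ol{\OD(p_T)}$. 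Your diagnosis that ``the $q$-side resists a direct treatment because the base changes'' is therefore not borne out.
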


\begin{proof}
Set \[A := t_1^{-1} \ol{v}_T^{-1} (\ol{S}(01)) \cap t_2^{-1} \ol{v}_T^{-1} (\ol{S}(01)).\]

Then the assertion is equivalent to the equality
\[j_q (\ol{\OD (q_S)}) =  A.\]

The inclusion $\subset$ follows from the commutativity of \eqref{eq:diagrams1}. 
By the universal property of the fiber product, we have a natural morphism 
\[
\gamma : A \to \ol{S}(01) \times_{\ol{S}(11)} \ol{S}(01).
\]

\begin{claim}\label{claim:4.2.10}
$\gamma (A) \cap \Delta (\ol{S}(01)) = \emptyset$.
\end{claim}

\begin{proof}
First, note that $\ol{\OD (q_{T})}$ is the closure of 
\[
\OD (q_T^\o) = \OD (q_S^\o) = S(00)^\o \times_{S(10)^\o} S(00)^\o - \Delta (S(00)^\o)
\] 
by construction (see the proof of \cite[Th. 3.1.3]{nistopmod}).
Therefore, since $A \subset \ol{\OD (q_T)}$ by construction, the open immersion 
\[
\OD (q_T^\o) \to A
\]
is also dense. 
In particular, $\OD (q_T^\o)$ is dense in $\gamma (A)$.

Since $\OD (q_T^\o) \cap \Delta (S(01)^\o) = \emptyset$ and 
since $\Delta (S(01)^\o)$ is dense in  $\Delta (\ol{S}(01))$, we have $\OD (q_T^\o) \cap \Delta (\ol{S}(01)) = \emptyset$.
Therefore, since $\Delta (\ol{S}(01))$ is closed in $\ol{S}(01) \times_{\ol{S}(11)} \ol{S}(01)$, we have
\[ 
\gamma (A) \cap \Delta (\ol{S}(01)) \subset \ol{\OD (q_T^\o)} \cap \Delta (\ol{S}(01)) = \emptyset.
\]
This finishes the proof of Claim \ref{claim:4.2.10}.
\end{proof}

By Claim \ref{claim:4.2.10}, the morphism $\gamma$ induces
\[
A \to j_q (\ol{\OD (q_S)}).
\]

This map is inverse to the inclusion morphism, since their restrictions to the dense interior $S^\o (01) \times_{S^\o(11)} S^\o (01)$ are the identity. 
Therefore, $j_q (\ol{\OD (q_S)}) = A$.
This finishes the proof of Lemma \ref{claim:3.1.10}.
\end{proof}

\begin{lemma}\label{claim:UD} We have
$\Gamma_1 - U = |D'|$ (see \eqref{eq:D-prime} and \eqref{eq:U} for the definitions of $D'$ and $U$).
\end{lemma}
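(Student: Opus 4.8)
The plan is to show that both $|D'|$ and $\Gamma_1 - U$ coincide with the preimage $a^{-1}\bigl(\ol{\OD(q_T)} - j_q(\ol{\OD(q_S)})\bigr)$.

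First I would compute $|D'|$. Since $\Gamma_1$ is normal and $D' = \sup(b^\ast s_1^\ast D_{01}, b^\ast s_2^\ast D_{01})$ is obtained by taking componentwise maxima of multiplicities, its support is $|b^\ast s_1^\ast D_{01}| \cup |b^\ast s_2^\ast D_{01}|$; and because these pullbacks are defined (the dense open $\OD(q_S^\o) = \OD(q_T^\o)$ lies in the interior, away from $|D_{01}|$), each support is the preimage of $|D_{01}|$. Using the commutativity of \eqref{eq:diagrams1}, i.e. $\ol{v}_T t_i a = s_i b$ for $i = 1,2$, this gives
\[
|D'| = a^{-1}\bigl(t_1^{-1}\ol{v}_T^{-1}(|D_{01}|) \cup t_2^{-1}\ol{v}_T^{-1}(|D_{01}|)\bigr).
\]
By Lemma \ref{claim:3.1.10} the set in parentheses equals $\ol{\OD(q_T)} - j_q(\ol{\OD(q_S)})$, so $|D'| = a^{-1}\bigl(\ol{\OD(q_T)} - j_q(\ol{\OD(q_S)})\bigr)$.

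Next I would identify $\Gamma_1 - U$ with the same preimage, for which it suffices to prove $a^{-1}\bigl(j_q(\ol{\OD(q_S)})\bigr) = U$. This I would extract from Lemma \ref{lem:no-extra-fiber} applied to the separated morphism $a : \Gamma_1 \to \ol{\OD(q_T)}$ and the open subset $U = j_q'(\ol{\OD(q_S)})$: the set $U$ is dense in $\Gamma_1$ since it contains the dense interior $\OD(q_T^\o)$; its image $a(U) = j_q(\ol{\OD(q_S)})$ is open as $j_q$ is an open immersion; and $a|_U : U \to j_q(\ol{\OD(q_S)})$ is an isomorphism because $a j_q' = j_q$, so that both source and target are identified with $\ol{\OD(q_S)}$. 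The lemma then gives $a^{-1}(a(U)) = U$, and taking complements yields $\Gamma_1 - U = a^{-1}\bigl(\ol{\OD(q_T)} - j_q(\ol{\OD(q_S)})\bigr)$. Comparing with the first computation proves the claim.

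The divisor-support bookkeeping is formal once $\ol{v}_T t_i a = s_i b$ and Lemma \ref{claim:3.1.10} are available. The point to get right is the application of Lemma \ref{lem:no-extra-fiber}: I must verify that $U$ is genuinely dense in $\Gamma_1$ (through the common interior $\OD(q_S^\o) = \OD(q_T^\o)$, which is dense since $\Gamma_1$ is a modification of $\ol{\OD(q_T)}$) and that $a$ restricts to an isomorphism onto the open set $j_q(\ol{\OD(q_S)})$; these are exactly the hypotheses that prevent $a$ from having an extra fibre over $j_q(\ol{\OD(q_S)})$.
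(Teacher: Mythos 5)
Your proof is correct and takes essentially the same route as the paper's: the paper likewise applies $a^{-1}$ to the equality of Lemma \ref{claim:3.1.10}, converts $a^{-1}t_i^{-1}\ol{v}_T^{-1}$ to $b^{-1}s_i^{-1}$ via the commutativity of \eqref{eq:diagrams1}, identifies $a^{-1}\bigl(j_q(\ol{\OD(q_S)})\bigr)=U$ by Lemma \ref{lem:no-extra-fiber}, and recognizes $b^{-1}s_1^{-1}(|D_{01}|)\cup b^{-1}s_2^{-1}(|D_{01}|)=|D'|$ from the construction of $D'$. You have merely spelled out the hypotheses of Lemma \ref{lem:no-extra-fiber} (density of $U$, openness of $a(U)$, and $a|_U$ being an isomorphism) that the paper leaves implicit.
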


\begin{proof}
By applying $a^{-1}$ to both sides of the equality in Lemma \ref{claim:3.1.10}, by using the commutativity of the above diagram and by Lemma \ref{lem:no-extra-fiber}, we have
\begin{align*}
\Gamma_1 - U = b^{-1} s_1^{-1} (|D_{01}|) \cup b^{-1} s_2^{-1} (|D_{01}|) = |D'|,
\end{align*}
where the last equality follows from the construction of $D'$.
This finishes the proof.
\end{proof}

\subsection{Increasing multiplicities} By Lemmas \ref{lem:increasing}, \ref{claim:3.1.9} and \ref{claim:UD}, there exists a positive integer $n$ such that 
\begin{equation}\label{eq:nD}
a^\ast \OD (q_{T})^\infty \leq b^\ast c^\ast T(11)^\infty + nD' .
\end{equation}

Set 
\[
T' := T_{0,n},
\]
where the right hand side is defined as in \S \ref{sssection:cof-gencon}.

Then there exists a natural morphism $T' \to T$ in $(\MSm^\fin )^\Sq$,
and we obtain the following commutative diagram 
\begin{equation}\label{eq:cartesian-od}\begin{gathered}\xymatrix{
\OD (q_{T'}) \ar[r]  \ar[d] & \OD (q_{T}) \ar[d] \\
\OD (p_{T'}) \ar[r] & \OD (p_{T})
}\end{gathered}\end{equation}
in $\MSm$ by the functorial property of $\OD$ (see \cite[Th. 3.1.3]{nistopmod}).

\begin{lemma}\label{lem:cartesian-OD}
Diagram \eqref{eq:cartesian-od} is cartesian in $\MSm$.
\end{lemma}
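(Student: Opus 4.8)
The plan is to reduce the cartesianness of \eqref{eq:cartesian-od} to a single equality of divisors on the normalized graph $\Gamma_1$, and then to extract that equality from the bound \eqref{eq:nD}. First I would settle the interiors and the ambient schemes. Since $T'=T_{0,n}$ has the same ambient scheme and the same interior as $T$ at every vertex — only the moduli at the vertices $(01)$ and $(00)$ have been enlarged, and along divisors supported away from the interiors — the horizontal maps $\OD(q_{T'})\to\OD(q_T)$ and $\OD(p_{T'})\to\OD(p_T)$ are isomorphisms on interiors and are the identity on the common ambient schemes $\ol{\OD(q_{T'})}=\ol{\OD(q_T)}=:Q$ and $\ol{\OD(p_{T'})}=\ol{\OD(p_T)}=:P$. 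Because $T^\o$ (hence $T'^\o$) is an elementary Nisnevich square, the two vertical maps are isomorphisms on interiors by \cite[Th.~3.1.3]{nistopmod}. Thus \eqref{eq:cartesian-od} becomes a square of isomorphisms after applying $(-)^\o$, so the canonical comparison morphism $\phi:\OD(q_{T'})\to \OD(q_T)\times_{\OD(p_T)}\OD(p_{T'})$ is an isomorphism on interiors, and only the moduli remain to be compared.

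The map $\OD(q_T)\to\OD(p_T)$ is the birational map $Q\dashrightarrow P$ resolved by $a:\Gamma_1\to Q$ and $b:\Gamma_1\to P$, while $\OD(p_{T'})\to\OD(p_T)$ is the identity on $P$; hence the modulus of the target of $\phi$, read on $\Gamma_1$, is $\sup(a^\ast\OD(q_T)^\infty,\,b^\ast\OD(p_{T'})^\infty)$. Using Lemma \ref{lKL} to compare divisors through the surjection $a$, I would reduce the lemma to the single equality
\[
a^\ast\OD(q_{T'})^\infty=\sup\bigl(a^\ast\OD(q_T)^\infty,\ b^\ast\OD(p_{T'})^\infty\bigr)
\]
on $\Gamma_1$. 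The inequality $\geq$ is automatic: the morphisms $\OD(q_{T'})\to\OD(q_T)$ (the identity on $Q$) and $\OD(q_{T'})\to\OD(p_{T'})$ (witnessed by $a,b$) are admissible, giving $a^\ast\OD(q_{T'})^\infty\ge a^\ast\OD(q_T)^\infty$ and $a^\ast\OD(q_{T'})^\infty\ge b^\ast\OD(p_{T'})^\infty$ respectively.

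For the reverse inequality I would expand the left-hand side. From $T'(00)^\infty=\sup(\ol q_{T'}^\ast T(10)^\infty,\ \ol v_{T'}^\ast T'(01)^\infty)$ and the off-diagonal modulus formula, the commutativities of \eqref{eq:diagrams1} (which give $a^\ast t_i^\ast\ol v_T^\ast=b^\ast s_i^\ast$) rewrite $a^\ast\OD(q_{T'})^\infty$ as the supremum of a single $q$-pullback term $a^\ast\rho$ of $T(10)^\infty$ and of the two terms $b^\ast s_i^\ast T'(01)^\infty=b^\ast s_i^\ast(T(01)^\infty+nD_{01})$, $i=1,2$. Now $a^\ast\rho\le a^\ast\OD(q_T)^\infty\le b^\ast c^\ast T(11)^\infty+nD'$ by \eqref{eq:nD}, whereas $p_T$ being admissible ($T(01)^\infty\ge\ol p_T^\ast T(11)^\infty$, with $c=\ol p_T s_i$) together with $D'=\sup(b^\ast s_1^\ast D_{01},b^\ast s_2^\ast D_{01})$ yields $b^\ast\OD(p_{T'})^\infty=\sup_i b^\ast s_i^\ast T'(01)^\infty\ge b^\ast c^\ast T(11)^\infty+nD'$. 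Hence $a^\ast\rho$ is absorbed and the supremum collapses to $b^\ast\OD(p_{T'})^\infty$, which is exactly the desired equality.

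The main obstacle is the bookkeeping of this last step: one must correctly identify $a^\ast\OD(q_{T'})^\infty$ with the three-term supremum via \eqref{eq:diagrams1} and the fiber-product modulus formula, and then verify the chain $a^\ast\OD(q_T)^\infty\le b^\ast c^\ast T(11)^\infty+nD'\le b^\ast\OD(p_{T'})^\infty$ multiplicity by multiplicity — the supremum does not commute with the additive shift by $nD_{01}$, so this comparison genuinely relies on $D'$ being a universal supremum in the sense of Definition \ref{defn:uni-sup}. Everything else, including the descent through $a$ via Lemma \ref{lKL}, is formal.
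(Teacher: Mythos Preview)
Your approach diverges from the paper's and contains a genuine gap. The paper proves this lemma in three lines using only the pasting law for pullbacks: by \cite[Prop.~3.1.4]{nistopmod}, since $T$ is cartesian there is a cartesian square
\[\xymatrix{
\OD(q_T)\ar[r]\ar[d] & T(10)\ar[d]^{u_T}\\
\OD(p_T)\ar[r] & T(11),
}\]
and likewise for $T'$; since $T'(10)=T(10)$ and $T'(11)=T(11)$, the outer rectangle in
\[\xymatrix{
\OD (q_{T'}) \ar[r]  \ar[d] & \OD (q_{T}) \ar[d]  \ar[r]& T(10) \ar[d]^{u_T} \\
\OD (p_{T'}) \ar[r] & \OD (p_{T}) \ar[r] & T(11)
}\]
is cartesian as well, so the left square is cartesian. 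No divisor computation is needed, and \eqref{eq:nD} plays no role here --- it enters only later, in the proof of Proposition \ref{prop:OD-T-prime}.

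The gap in your argument is the claim that $\ol{\OD(q_{T'})}=\ol{\OD(q_T)}$ and $\ol{\OD(p_{T'})}=\ol{\OD(p_T)}$, with the horizontal maps being the identity on ambient spaces. The ambient space of $\OD(p_T)$ is built from the canonical model $T(01)\ctimes_{T(11)}T(01)$, which is a normalised blow-up of $\ol T(01)\times_{\ol T(11)}\ol T(01)$ along the scheme-theoretic intersection of the two pullbacks of $T(01)^\infty$. Replacing $T(01)^\infty$ by $T'(01)^\infty=T(01)^\infty+nD_{01}$ changes that centre scheme-theoretically (even though its support is unchanged), so the two blow-ups need not coincide. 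The paper confirms this implicitly: in the proof of Proposition \ref{prop:OD-T-prime} it introduces a \emph{further} graph $\Gamma_2$ to resolve $\ol{\OD(p_{T'})}\dashrightarrow\Gamma_1$, which would be unnecessary if $\ol{\OD(p_{T'})}=P$. Once the ambient identification fails, your reduction to a single sup-equality on $\Gamma_1$ collapses; carrying it out on a genuine common resolution is exactly the computation of Proposition \ref{prop:OD-T-prime}, which is the \emph{next} result and uses the present lemma as input. In short, you have tried to prove the easy lemma by the hard computation it is designed to set up.
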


\begin{proof}
Consider the following commutative diagram:
\[\xymatrix{
\OD (q_{T'}) \ar[r]  \ar[d] & \OD (q_{T}) \ar[d]  \ar[r]^{\pi_{q}}& T(10) \ar[d]^{u_T} \\
\OD (p_{T'}) \ar[r] & \OD (p_{T}) \ar[r]^{\pi_{p}} & T(11),
}\]
in $\MSm$, where $\pi_p$ and $\pi_q$ are the natural projections to the bases of fiber products. 
Since $T$ is cartesian, the right square is cartesian by \cite[Prop. 3.1.4]{nistopmod}.
Since $T'$ is also cartesian and since $T'(11)=T(11)$ and $T'(10) = T(10)$ by construction, the large square is cartesian by {\it loc. cit.}.
Therefore, a general argument shows that the left square is cartesian. 
This finishes the proof.
\end{proof}

The main point of this subsection is:

\begin{prop}\label{prop:OD-T-prime}
The natural morphism $\OD (q_{T'}) \to \OD (p_{T'})$ is an isomorphism in $\MSm$.
\end{prop}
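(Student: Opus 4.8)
The plan is to reduce the proposition to a single equality of modulus divisors on the normalization $\Gamma_1$ of \eqref{eq:gamma-1}. Since $T'=T_{0,n}$ changes only the divisor at the $(01)$-vertex, all ambient schemes are unchanged; in particular $\ol{\OD(q_{T'})}=\ol{\OD(q_T)}$, $\ol{\OD(p_{T'})}=\ol{\OD(p_T)}$, and the maps $a,b,s_i,t_i,c$ of \S\ref{s3.4} remain available. As $T'^\o=T^\o$ is still an elementary Nisnevich square, the interior map $\OD(q_{T'})^\o\to\OD(p_{T'})^\o$ is an isomorphism in $\Sm$, and the morphism $\OD(q_{T'})\to\OD(p_{T'})$ of \eqref{eq:cartesian-od} is admissible by construction. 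Testing admissibility on the normal, proper birational $\Gamma_1$, it will be an isomorphism in $\MSm$ as soon as the reverse map is admissible, i.e. as soon as $a^\ast\OD(q_{T'})^\infty=b^\ast\OD(p_{T'})^\infty$ on $\Gamma_1$.

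First I would pull both off-diagonal moduli back to $\Gamma_1$. Writing $P:=\ol q_T^\ast T(10)^\infty$, $Q:=\ol v_T^\ast T(01)^\infty$ and $R:=\ol v_T^\ast D_{01}$ on $\ol T(00)$, the off-diagonal divisors are suprema of the two projection pullbacks, and the commutativity of \eqref{eq:diagrams1} yields the identities $b^\ast s_i^\ast=a^\ast t_i^\ast\ol v_T^\ast$ on divisors, $a^\ast t_1^\ast\ol q_T^\ast=a^\ast t_2^\ast\ol q_T^\ast$, and $c=\ol p_T s_i$. Since pullback to the normal scheme $\Gamma_1$ commutes with $\sup$, a direct computation gives
\[
b^\ast\OD(p_{T'})^\infty=\sup\bigl(a^\ast t_1^\ast Q+n\,a^\ast t_1^\ast R,\; a^\ast t_2^\ast Q+n\,a^\ast t_2^\ast R\bigr),
\]
and, setting $\Pi:=a^\ast t_1^\ast P=a^\ast t_2^\ast P$,
\[
a^\ast\OD(q_{T'})^\infty=\sup\bigl(\Pi,\; b^\ast\OD(p_{T'})^\infty\bigr).
\]
The second display already gives the automatic inequality $\geq$, so the whole proposition is reduced to the single inequality $\Pi\le b^\ast\OD(p_{T'})^\infty$.

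To prove this I would argue along each prime divisor $E$ of $\Gamma_1$. From \eqref{eq:nD} together with $\Pi\le a^\ast\OD(q_T)^\infty$ one gets $\Pi\le b^\ast c^\ast T(11)^\infty+nD'$, while the admissibility of $p_T$ (giving $T(01)^\infty\ge\ol p_T^\ast T(11)^\infty$) combined with $c=\ol p_T s_i$ yields the global bound $b^\ast c^\ast T(11)^\infty\le b^\ast s_i^\ast T(01)^\infty=a^\ast t_i^\ast Q$, valid for both $i=1,2$. Since $D'=\sup(a^\ast t_1^\ast R,a^\ast t_2^\ast R)$, for a fixed $E$ one chooses the index $i$ realizing $\operatorname{ord}_E D'$; then
\[
\operatorname{ord}_E\Pi\le\operatorname{ord}_E\bigl(b^\ast c^\ast T(11)^\infty\bigr)+n\operatorname{ord}_E D'\le\operatorname{ord}_E\bigl(a^\ast t_i^\ast Q+n\,a^\ast t_i^\ast R\bigr)\le\operatorname{ord}_E\bigl(b^\ast\OD(p_{T'})^\infty\bigr).
\]
As $E$ is arbitrary, $\Pi\le b^\ast\OD(p_{T'})^\infty$, the two pullbacks coincide, and the morphism is an isomorphism.

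The main obstacle is exactly this last inequality: because a supremum of sums does not split as a sum of suprema, one cannot compare $a^\ast\OD(q_{T'})^\infty$ and $b^\ast\OD(p_{T'})^\infty$ by a single divisor manipulation but must work prime-divisor by prime-divisor, and the delicate point is to feed the $nD'$ produced by \eqref{eq:nD} into the \emph{same} projection index $i$ that bounds $\Pi$ via $T(11)^\infty$ and the admissibility of $p_T$. The reason for increasing the multiplicity only along $D_{01}$ (that is, taking $T_{0,n}$ rather than a general $T_{m,n}$) is precisely to place both $D'$ and the added multiplicity on the $(01)$-side, so that this index-matching goes through.
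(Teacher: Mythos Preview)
Your divisor arithmetic is essentially right and mirrors the paper's computation, but the opening move contains a genuine gap: you assert that ``all ambient schemes are unchanged; in particular $\ol{\OD(q_{T'})}=\ol{\OD(q_T)}$, $\ol{\OD(p_{T'})}=\ol{\OD(p_T)}$''. This is not justified. The vertex $T'(00)=T_{0,n}(00)$ is \emph{defined} as the canonical model $T(10)\ctimes_{T(11)}T'(01)$, whose ambient scheme is built by a normalized blow-up along the scheme-theoretic intersection of the pulled-back moduli; replacing $T(01)^\infty$ by $T(01)^\infty+nD_{01}$ changes that center, hence the blow-up, hence the scheme $\ol{T'(00)}$. (Even $\ol{T_{0,0}(00)}$ need not coincide with the given $\ol T(00)$, which was not constructed as a canonical model.) The same dependence on divisors propagates to $\ol{\OD(p_{T'})}$ and $\ol{\OD(q_{T'})}$. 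Consequently the maps $a,b$ of \S\ref{s3.4} land in the \emph{unprimed} off-diagonal schemes, and expressions such as $a^\ast\OD(q_{T'})^\infty$ or $b^\ast\OD(p_{T'})^\infty$ are not defined on $\Gamma_1$ without further work; you cannot ``test admissibility'' of the primed morphism there.

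The paper sidesteps exactly this difficulty in two moves you did not make. First, Lemma~\ref{lem:cartesian-OD} shows that \eqref{eq:cartesian-od} is cartesian in $\MSm$; this reduces constructing the inverse $\OD(p_{T'})\to\OD(q_{T'})$ to producing a morphism $\OD(p_{T'})\to\OD(q_T)$ lifting $\OD(p_{T'})\to\OD(p_T)$ --- so one only ever needs the \emph{old} $\OD(q_T)$, whose ambient scheme is fixed. Second, since $\ol{\OD(p_{T'})}$ and $\Gamma_1$ (resp.\ $\ol{\OD(q_T)}$) are merely birational, the paper passes to auxiliary graphs $\Gamma_2^N$, $\Gamma_3^N$ and checks the admissibility inequality there, finally descending via Lemma~\ref{lKL}. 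Your prime-by-prime estimate (choosing the index $i$ realizing $\operatorname{ord}_E D'$ and combining \eqref{eq:nD} with the admissibility of $p_T$) is precisely the content of the paper's chain of inequalities leading to \eqref{eq:4.2.9}; what is missing is the scaffolding that makes those pullbacks meaningful.
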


\begin{proof}
We will construct an inverse morphism. 
By Lemma \ref{lem:cartesian-OD},
it suffices to show that $\OD (p_{T'}) \to \OD (p_T)$ lifts to a morphism $\OD (p_{T'}) \to \OD (q_T)$.

Let $\Gamma_2$ be the graph of the rational map $\ol{\OD (p_{T'})} \dashrightarrow \Gamma_1$.
Then we obtain the following commutative diagrams ($i=1,2$)
\[\xymatrix{
\ol{\OD (p_{T'})} \ar[d]_{s'_i} \ar@/_30pt/[dd]_{c'} & \ar[l]_(0.35){a'} \Gamma_2^N  \ar[r]^{b'}&  \Gamma_1 \ar[r]^(0.35){b} & \ol{\OD (p_T)} \ar[d]^{s_i} \ar@/^30pt/[dd]^{c} \\
\ol{T}'(01) \ar[rrr]^{=} \ar[d]_{\ol{p}_{T'}} & &  & \ol{T}(01) \ar[d]^{\ol{p}_T} \\
\ol{T}'(11) \ar[rrr]^{=} & & &  \ol{T}(11),
}\]
where $s'_i$ are natural projections, and $c':=\ol{p}_{T'} s'_1 = \ol{p}_{T'} s'_2$.

By \eqref{eq:nD}, 
\[
(b')^\ast a^\ast \OD (q_{T})^\infty \leq (b')^\ast (b^\ast c^\ast T(11)^\infty + nD'). 
\]

The commutativity of the above diagram shows 
\begin{align*}
 (b')^\ast b^\ast c^\ast T(11)^\infty =  (a')^\ast  (c')^\ast T(11)^\infty .
\end{align*}

By \eqref{eq:D-prime}, 
\begin{align*}
(b')^\ast D' &=  \sup ((b')^\ast b^\ast s_1^\ast D_{01},(b')^\ast b^\ast s_2^\ast D_{01} ) \\
&= \sup ((a')^\ast (s'_1)^\ast  D_{01}, (a')^\ast (s'_2)^\ast D_{01} ) .
\end{align*}

Combining these, we get
\begin{align*}
(b')^\ast a^\ast \OD (q_{T})^\infty \leq \sup_{i=1,2} ((a')^\ast  ((c')^\ast T(11)^\infty + n (s'_i)^\ast  D_{01})).
\end{align*}

By the admissibility of $p_{T} : T(01) \to T(11)$, and noting that $p_{T}=p_{T'}$ by construction, we have
\[
(c')^\ast T(11)^\infty = (s'_i)^\ast \ol{p}_{T'}^\ast T(11)^\infty \leq (s'_i)^\ast T(01)^\infty .
\]
for each $i=1,2$

Therefore, we have
\begin{equation}\label{eq:4.2.8}
(b')^\ast a^\ast \OD (q_{T})^\infty 
\leq 
\sup_{i=1,2} ((a')^\ast (s'_i)^\ast (T(01)^\infty + n D_{01})).
\end{equation}

Recall that $T'(01)^\infty = T(01)^\infty + nD_{01}$ by the definition of $T'=T_{0,n}$.
Moreover, by the construction of $\OD$, we have $\OD (p_{T'})^\infty = \sup_{i=1,2} ((s'_i)^\ast T'(01)^\infty )$ (see the proof of \cite[Th. 3.1.3]{nistopmod}).
Therefore, we have

\begin{align*}
\sup_{i=1,2} ((a')^\ast (s'_i)^\ast (T(01)^\infty + n D_{01})) 
&= \sup_{i=1,2} ((a')^\ast (s'_i)^\ast T'(01)^\infty ) \\
&= (a')^\ast  \sup_{i=1,2} ((s'_i)^\ast T'(01)^\infty ) \\
&= (a')^\ast \OD (p_{T'})^\infty .
\end{align*}

Combined with \eqref{eq:4.2.8}, this implies 
\begin{equation}\label{eq:4.2.9}
(b')^\ast a^\ast \OD (q_{T})^\infty 
\leq
(a')^\ast \OD (p_{T'})^\infty .
\end{equation}

Let $\Gamma_3$ be the graph of the rational map $\ol{\OD (p_{T'})} \dashrightarrow \ol{\OD (q_T)}$, and $\Gamma_3^N \to \Gamma_3$ be the normalization of $\Gamma_3$.
Recalling that $\Gamma_2$ is the graph of the rational map $\ol{\OD (p_{T'})} \dashrightarrow \Gamma_1$, we have a natural birational morphism $\Gamma_2 \to \Gamma_3$.
Thus, we have the following commutative diagram:
\[\xymatrix{
\ol{\OD (p_{T'})} \ar[d]_{\id} & \ar[l]_(0.35){a'} \ar[r]^{b'} \ar[d]^{\xi} \Gamma_2^N & \Gamma_1 \ar[d]^a \\
\ol{\OD (p_{T'})} & \ar[l]_(0.35)l \ar[r]^(0.35)r \Gamma_3^N & \ol{\OD (q_T)} ,
}\]
where $\xi$ is induced from the morphism $\Gamma_2 \to \Gamma_3$ by the universal property of normalization.

By \eqref{eq:4.2.9} and by the commutativity of the above diagram, we have
\[
\xi^\ast r^\ast \OD (q_{T})^\infty 
\leq
\xi^\ast l^\ast \OD (p_{T'})^\infty .
\]

Since $\xi$ is a proper birational by construction, Lemma \ref{lKL} and \eqref{eq:4.2.9} imply
\[
r^\ast \OD (q_{T})^\infty 
\leq
l^\ast \OD (p_{T'})^\infty ,
\]
which shows that $\OD (p_{T'}) \to \OD (q_{T})$ is defined in $\MSm$.
This concludes the proof of Proposition \ref{prop:OD-T-prime}.
\end{proof}

\subsection{End of proof of Theorem \protect\ref{main:cofinality-MV-partial}}

We now assume that $S$ is partially compact.
Then $T'$ satisfies Condition (2) in Definition \ref{def:new-MV}. 
Since $T'$ is cartesian in $\MSm$, it also satisfies Condition (1).
Moreover, it satisfies Condition (3) by Proposition \ref{prop:OD-T-prime}. 
Therefore, $T'$ is an $\MV$-square, and hence $T' \in \Comp^{\MV} (S)$. This finishes the proof.

\section{Cofinality of $\protect\MV$-squares: the general case}\label{section:cofinality-MV}

In this section, we complete the proof of Theorem \ref{main:cofinality-MV}.

\subsection{Preparatory lemmas}

\begin{lemma}\label{lem:pc-modif}
Let $S$ be a $\ulMVfin$-square and let $S \to S'$ be a partial compactification. 
Let $F$ be a closed subscheme of $\ol{S}'(11)$ such that $F \cap S'(11)^\o = \emptyset$, and set $F_{ij} := F \times_{\ol{S}'(11)} \ol{S}'(ij) \subset \ol{S}'(ij)$ for $(ij) \in \Sq$. Note that $F_{11}=F$ by definition. 

Let 
\[S'_F (ij):= \Bl_{F_{ij}} S'(ij)\] be the blow-up of $S'(ij)$ along $F_{ij}$ (see Definition \ref{def:blow-up-mod}).
Note that for any morphism  $(ij) \to (kl)$ in $\Sq$, the structure morphism $S'(ij) \to S'(kl)$ lifts to a morphism $S'_F(ij) \to S'_F(kl)$ in $\ulMSm^\fin$ by the universal property of blowing up.

Then the resulting square $S'_F$ is an $\ulMVfin$-square. 
Moreover, the morphism $S \to S'$ lifts uniquely to a morphism $S \to S'_F$ in $(\ulMSm^\fin )^\Sq$, and it is a partial compactification of $S$.
\end{lemma}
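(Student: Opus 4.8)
The plan is to reduce the whole statement to the single fact that blowing up commutes with flat base change. Because $\ol{S}'$ is an elementary Nisnevich square, every edge $\ol{S}'(ij)\to\ol{S}'(11)$ is flat (a composite of open immersions and \'etale maps), and by definition $F_{ij}=F\times_{\ol{S}'(11)}\ol{S}'(ij)$ is the flat pullback of $F=F_{11}$. Hence for each $(ij)\in\Sq$ there is a canonical identification, compatible with the structure maps,
\[
\ol{S}'_F(ij)=\Bl_{F_{ij}}(\ol{S}'(ij))\cong \Bl_F(\ol{S}'(11))\times_{\ol{S}'(11)}\ol{S}'(ij)=\ol{S}'_F(11)\times_{\ol{S}'(11)}\ol{S}'(ij).
\]
In other words, the square of schemes $\ol{S}'_F$ is the base change of $\ol{S}'$ along the blow-down $g:\ol{S}'_F(11)=\Bl_F(\ol{S}'(11))\to\ol{S}'(11)$. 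Everything else is bookkeeping on top of this identity.

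First I would check that $S'_F$ is an $\ulMVfin$-square, i.e. Condition (a) of Definition \ref{def:partial-comp}. On underlying schemes, elementary Nisnevich squares are stable under arbitrary base change: cartesianness, \'etaleness of the vertical edges and openness of the horizontal edges all pull back, and the isomorphism condition is preserved because it is equivalent to asking that the \'etale map $\ol{S}'(01)\to\ol{S}'(11)$ be an isomorphism over the \emph{reduced} closed complement $Z:=(\ol{S}'(11)-\ol{S}'(10))_\red$. This property base changes along $g$: the \'etale pullback of a reduced scheme is reduced, so $\ol{S}'(01)\times_{\ol{S}'(11)}Z\cong Z$ stays an isomorphism after $\cdot\times_Z g^{-1}(Z)_\red$, which is exactly Condition (4) for $\ol{S}'_F$. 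Thus $\ol{S}'_F$ is an elementary Nisnevich square. Minimality of the edges of $S'_F$ is then immediate: each edge is the lift $\tilde g$ of a minimal edge $g$ of $S'$, sitting in a commuting square with the two blow-downs, so pulling back $S'(ij)^\infty=g^\ast S'(kl)^\infty$ gives $S'_F(ij)^\infty=\tilde g^\ast S'_F(kl)^\infty$.

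It remains to construct the lift $S\to S'_F$ and verify Conditions (b)--(d). I would apply Lemma \ref{lem:blow-up-lift}(1) to each component $(S(ij)\to S'(ij))\in\Comp(S(ij))$ with center $F_{ij}$: this simultaneously lifts the open immersion $\ol{S}(ij)\to\ol{S}'(ij)$ to an open immersion $\ol{S}(ij)\to\ol{S}'_F(ij)$ and shows $(S(ij)\to S'_F(ij))\in\Comp(S(ij))$, yielding Condition (b) (case $(ij)=(11)$) and Condition (c); uniqueness and $\Sq$-compatibility follow from separatedness and the density of the interiors. Condition (d) then falls out of the base-change identity of the first paragraph: since $\ol{S}(11)\to\ol{S}'_F(11)\to\ol{S}'(11)$ is the original open immersion,
\[
\ol{S}'_F(ij)\times_{\ol{S}'_F(11)}\ol{S}(11)\cong\ol{S}'(ij)\times_{\ol{S}'(11)}\ol{S}(11)\cong\ol{S}(ij),
\]
the last isomorphism being Condition (d) for $S\to S'$. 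The main obstacle is the hypothesis of Lemma \ref{lem:blow-up-lift}, that $F_{ij}\cap\ol{S}(ij)$ be an effective Cartier divisor on $\ol{S}(ij)$: using the base-change identity together with Condition (d) for $S\to S'$, this closed subscheme is the flat pullback of $F\cap\ol{S}(11)$ along $\ol{S}(ij)\to\ol{S}(11)$, so the entire point reduces to the $(11)$-corner. There $F\cap\ol{S}(11)$ is automatically supported in $|S(11)^\infty|$ (since $F\cap S'(11)^\o=\emptyset$ forces $|F|\subset|S'(11)^\infty|$, whose trace on $\ol{S}(11)$ is $|S(11)^\infty|$ by minimality of $S(11)\to S'(11)$), and one must check it is Cartier; granting this, the lift is an open immersion because blowing up an effective Cartier divisor is an isomorphism. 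This Cartier verification at the $(11)$-corner is where I would concentrate the careful argument.
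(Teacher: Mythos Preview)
Your approach is essentially the paper's: both hinge on the flat base change identity $\ol{S}'_F(ij)\cong \ol{S}'(ij)\times_{\ol{S}'(11)}\ol{S}'_F(11)$, then deduce that $\ol{S}'_F$ is elementary Nisnevich as a pullback, apply Lemma~\ref{lem:blow-up-lift} at the $(11)$-corner, and obtain Condition~(d) from the same base-change identity combined with Condition~(d) for $S\to S'$.

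Two small points. First, a slip: for $(ij)\neq(11)$ you cannot write $(S(ij)\to S'(ij))\in\Comp(S(ij))$, since $S'(ij)$ is not proper; Lemma~\ref{lem:blow-up-lift} does not apply as stated. The paper handles these corners directly, just noting that the open immersion $\ol{S}(ij)\to\ol{S}'(ij)$ lifts to the blow-up because $F_{ij}\cap\ol{S}(ij)$ is Cartier (blow-up along a Cartier divisor is an isomorphism). Your reduction of this Cartier condition to the $(11)$-corner via flat pullback is correct and is the right way to organise it.

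Second, the Cartier verification you single out at the end is \emph{not} deducible from the hypothesis $F\cap S'(11)^\o=\emptyset$ alone: a closed subscheme supported in $|S(11)^\infty|$ need not be Cartier. The paper's proof simply writes ``by assumption'' at this point, so it is treated as an implicit extra hypothesis on $F$. You should not try to prove it from what is given. In the paper's sole use of this lemma (the proof of Proposition~\ref{prop:partial-cofinality}), the center $F$ produced by Lemma~\ref{lem:b-up} actually satisfies $F\cap\ol{S}(11)=\emptyset$ (since $F$ is supported in the closed set $\ol{S}'_1(11)-\ol{S}(11)$), so the issue is moot there.
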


\begin{proof}
Since $\ol{S}(ij) \to \ol{S}(11)$ are \'etale (hence flat), we have 
\begin{equation}\label{eq:3.1.1a}
\ol{S}'_F (ij) \cong \ol{S}'(ij) \times_{\ol{S}'(11)} \ol{S}'_F(11).
\end{equation}

Therefore, the resulting square $\ol{S}'_F$ is an elementary Nisnevich square as base change. 
Since $S'_F(ij)^\infty$ are the pull-back of $S'_F(11)^\infty$ by construction, we obtain an $\ulMVfin$-square $S'_F$.

By Lemma \ref{lem:blow-up-lift}, the object $(S(11) \to S'(11)) \in \Comp (S(11))$  lifts uniquely to $(S(11) \to S'_F (11)) \in \Comp (S(11))$.
Similarly, for $(ij) \in \Sq - \{(11)\}$, the open immersion $\ol{S}(ij) \to \ol{S}'(ij)$  lifts uniquely to an open immersion $\ol{S}(ij) \to \ol{S}'_F(ij)$ since $F_{ij} \cap \ol{S}(ij)$ is an effective Cartier divisor by assumption.
Therefore, we obtain a morphism $S \to S'_F$ in $\ulMSm^\fin$. 

The last thing to check is Condition (c) in Definition \ref{def:partial-comp}.
Since $S \to S'$ is a partial compactification, the natural morphism  
\[
\ol{S}(ij) \xrightarrow{\sim} \ol{S}'(ij) \times_{\ol{S}'(11)} \ol{S}(11).
\]
is an isomorphism.
Combining this with \eqref{eq:3.1.1a} as above, for each $(ij) \in \Sq$, we have
\begin{align*}
\ol{S}'_F (ij) \times_{\ol{S}'_F(11)} \ol{S}(11) 
&\cong \ol{S}'(ij) \times_{\ol{S}'(11)} \ol{S}'_F(11) \times_{\ol{S}'_F(11)} \ol{S}(11) \\
&\cong \ol{S}'(ij) \times_{\ol{S}'(11)} \ol{S}(11) \\
&\cong \ol{S}(ij).
\end{align*}
This finishes the proof of Lemma \ref{lem:pc-modif}.
\end{proof}

\begin{comment}
{\color{red}
The following elementary lemma is useful.

\begin{lemma}
Let $f : M \to N$ and $g : N' \to N$ be ambient morphisms, and assume that $f$ is minimal.
Set $M':=M \ctimes_N N'$.
Then the morphism $f':M' \to N'$, defined as the base change of $f$ along $g$, is ambient and minimal. 
Moreover, if $\ol{f}$ is an open immersion, then so if $\ol{f}'$.
If $f \in \Comp (M)$, then $f' \in \Comp (M')$.
\end{lemma}

\begin{proof}
The second projection $M' = M \ctimes_N N' \to N'$ is ambient by construction of canonical model of fiber product \cite[Def. 2.2.2]{nistopmod}. 
\end{proof}
}
\end{comment}

\begin{lemma}\label{lem:comp-modif}
Let $f : M \to N$ be a morphism in $\ulMSm$ with $N \in \MSm$.
Assume that $f$ is ambient and minimal, and that $f^\o : M^\o \to N^\o$ is an isomorphism. 
Then there exists an isomorphism $N \xrightarrow{\sim} N'$ in $\MSm$ such that the composite $M \to N \xrightarrow{\sim} N'$ belongs to $\Comp (M)$.
\end{lemma}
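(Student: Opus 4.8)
The plan is to produce $N'$ by a relative compactification of the morphism $\ol{f}\colon \ol{M}\to \ol{N}$ over the fixed proper base $\ol{N}$, and to take for the isomorphism $N\iso N'$ the inverse of the resulting proper modification. First I would identify $M^\o$ with $N^\o$ along $f^\o$. Minimality $M^\infty=\ol{f}^\ast N^\infty$ gives $\ol{f}^{-1}(|N^\infty|)=|M^\infty|$, hence $\ol{f}^{-1}(N^\o)=M^\o$, and $\ol{f}$ restricts to the identity on $N^\o=M^\o$. Applying Nagata's compactification theorem to the separated finite-type morphism $\ol{f}$, and then replacing the target by the scheme-theoretic closure of the image of $\ol{M}$, I obtain a factorization $\ol{M}\xrightarrow{\iota}\ol{N'}\xrightarrow{g}\ol{N}$ with $\iota$ a dense open immersion and $g$ proper; since $\ol{N}$ is proper over $k$, so is $\ol{N'}$. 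As $\iota(M^\o)$ is open and dense in $\ol{N'}$, is carried isomorphically (hence properly) by $g$ onto the open set $N^\o$, Lemma \ref{lem:no-extra-fiber} gives $g^{-1}(N^\o)=\iota(M^\o)$, i.e. $g$ is an isomorphism over $N^\o$. I then set $N':=(\ol{N'},g^\ast N^\infty)$, which lies in $\MSm$ because $\ol{N'}$ is proper and $N'^{\o}=g^{-1}(N^\o)\cong N^\o\in\Sm$.

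Next I would verify the two halves of the statement. For the $\MSm$-isomorphism, the graph of the rational map $\ol{N}\dashrightarrow\ol{N'}$ inverse to $g$ is the transpose of the graph of $g$, so it is isomorphic to $\ol{N'}$; writing $a,b$ for the projections of its normalization to $\ol{N}$ and $\ol{N'}$, one has $a=g\circ b$, whence $a^\ast N^\infty=b^\ast g^\ast N^\infty=b^\ast {N'}^\infty$. Thus the admissibility inequality is an equality in both directions, so $g$ is an isomorphism $N'\iso N$ in $\MSm$, and its inverse is the desired $N\iso N'$. For membership in $\Comp(M)$, the composite $M\to N\iso N'$ is exactly the morphism induced by $\iota$ (on interiors $\iota^\o=(g^\o)^{-1}f^\o$); it is ambient since $\ol{\iota}=\iota$ exists, and minimal since $\iota^\ast {N'}^\infty=\iota^\ast g^\ast N^\infty=\ol{f}^\ast N^\infty=M^\infty$ (using $g\iota=\ol f$). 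Together with the dense open immersion $\ol{\iota}$ and $N'\in\MSm$, this gives conditions (1)--(3) of Definition \ref{d1}.

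The hard part will be condition (4) of Definition \ref{d1}: the closed complement $\partial:=\ol{N'}-\iota(\ol{M})$, which sits inside $|{N'}^\infty|=\ol{N'}-M^\o$, must be the support of an effective Cartier divisor $C\le {N'}^\infty$ (so that $M^\infty_{N'}:={N'}^\infty-C$ is the required effective Cartier complement). A general Nagata $\ol{N'}$ need not make $\partial$ pure of codimension one, and even when it is, the boundary part of ${N'}^\infty$ need not be Cartier on a merely normal ambient space. The plan is to remedy this by one further modification that is an isomorphism over $N^\o$ and hence leaves the $\MSm$-isomorphism class untouched: I would replace $\ol{N'}$ by its normalized blow-up along the reduced boundary $\partial$. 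This is legitimate because $\partial\cap\iota(\ol{M})=\emptyset$, so $\iota(\ol{M})$ lifts to a dense open immersion, while the center lies in $|{N'}^\infty|$ away from $N^\o$; afterwards the new boundary is the support of the exceptional effective Cartier divisor. It then remains to split ${N'}^\infty=M^\infty_{N'}+C$ with $|C|=\partial$; here I would argue locally, using that $\partial$ and the interior divisor $|M^\infty|\subset\iota(\ol{M})$ are disjoint away from the boundary, and invoke Lemma \ref{lem:increasing} to absorb the needed multiplicities. Turning the set-theoretic complement $\partial$ into a genuine Cartier divisor dividing ${N'}^\infty$ is the crux; everything else is a formal consequence of Nagata's theorem, Lemma \ref{lem:no-extra-fiber}, and the normalized-graph computation.
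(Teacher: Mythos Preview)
Your overall strategy coincides with the paper's: produce a proper scheme over $\ol{N}$ containing $\ol{M}$ as a dense open (you invoke Nagata for $\ol{f}$; the paper takes the graph of the rational map from an arbitrary compactification of $\ol{M}$ to $\ol{N}$), then blow up along the reduced boundary to turn the complement into the support of an effective Cartier divisor. Your verifications of the $\MSm$-isomorphism and of conditions (1)--(3) of Definition~\ref{d1} are correct.

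There is one minor slip and one genuine gap. The slip: you write ``normalized blow-up'', but $\ol{M}$ is not assumed normal, so the open immersion $\iota(\ol{M})\hookrightarrow \Bl_\partial(\ol{N'})$ need not lift through the normalization. Simply omit the normalization; the plain blow-up suffices and is what the paper uses.

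The genuine gap is in condition~(4). After blowing up along the reduced closed subscheme $\partial$, you obtain an exceptional Cartier divisor $E$ with $|E|$ equal to the new boundary, and the new modulus is $\pi^\ast g^\ast N^\infty$. What you need is $E\le \pi^\ast g^\ast N^\infty$, so that $C:=E$ and $M_{N'}^\infty:=\pi^\ast g^\ast N^\infty-E$ give the required splitting into effective Cartier divisors. This inequality holds \emph{directly}: since $\partial\subset |g^\ast N^\infty|$ and $\partial$ carries its reduced structure, the local equation of $g^\ast N^\infty$ lies in the (radical) ideal sheaf $\sI_\partial$; on the blow-up $\sI_\partial\cdot\sO$ becomes the invertible ideal of $E$, whence $E\le \pi^\ast(g^\ast N^\infty)$. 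The paper records this tersely as ``$E\le (N')^\infty$ by construction''. Your plan to invoke Lemma~\ref{lem:increasing} instead does not work: that lemma would have you enlarge the modulus on the blow-up by a positive multiple of $E$, but any such enlargement destroys the admissibility of the inverse morphism $N\to N'$ (on the normalized graph one would need $0\ge n\,\nu^\ast E$ with $n>0$), and hence the $\MSm$-isomorphism you already established. The splitting must come for free from the reducedness of the blow-up center, not from increasing multiplicities.
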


\begin{proof}
Take any compactification $j:\ol{M} \to X$ and let $\Gamma_1$ be the graph of the birational map $g:X \dashrightarrow \ol{N}$.
Since the composite $gj=\ol{f}$ is a morphism of schemes, the open immersion $j$ lifts uniquely to an open immersion $j_1 : \ol{M} \to \Gamma_1$.
Let 
\[
\pi : \Gamma_2 := \Bl_{(\Gamma_1 - j_1 (\ol{M}))_\red } (\Gamma_1) \to \Gamma_1
\]
be the blow-up of $\Gamma_1$ along the closed subscheme $(\Gamma_1 - j_1 (\ol{M}))_\red$.
Then $j_1$ lifts to an open immersion $j_2 : \ol{M} \to \Gamma_2$ since $\pi$ is an isomorphism over $j_1 (\ol{M})$.
Thus we obtain the following commutative diagram:
\[\xymatrix{
\Gamma_2 \ar[r]^{\pi} &\Gamma_1 \ar[d] \ar[dr]^{p}& \\
\ol{M} \ar[r]^j \ar[ru]^{j_1} \ar[u]^{j_2} & X \ar@{.>}[r]^g & \ol{N}.
}\]

Note that $\ol{f} = p\pi j_2$ by construction. 
Set 
\[
N':=(\ol{N}',(N')^\infty):=(\Gamma_2 , \pi^\ast p^\ast N^\infty ).
\]

Then we have $j_2^\ast (N')^\infty = j_2^\ast \pi^\ast p^\ast N^\infty = \ol{f}^\ast N^\infty = M^\infty$, where the last equality follows from the minimality of $f$.
Therefore, $j_2$ induces a minimal morphism $J:M \to N'$.
Moreover, denoting by $E$ the exceptional divisor of the blow-up $\pi$, we have $E \leq (N')^\infty$ by construction. 
Thus, we obtain a decomposition $(N')^\infty = ((N')^\infty - E) + E$ as a sum of two effective Cartier divisors. 
Therefore, we have $(J:M \to N') \in \Comp (M)$.

By construction $p\pi$ induces a minimal morphism $h:N' \to N$. 
Moreover, it induces an isomorphism $h^\o : (N')^\o \xrightarrow{\sim} N^\o$.
Therefore, $h$ is an isomorphism in $\MSm$.
The commutativity of the above diagram shows $hJ = f$, hence $h^{-1}f=J \in \Comp (M)$.
This finishes the proof.
\end{proof}

\subsection{A key proposition}

\begin{prop}\label{prop:partial-cofinality}
Let $S$ be an $\ulMVfin$-square, and let $(S \to T) \in \Comp (S)$ be any object.
Then there exists a partial compactification $S \to S'$, an object $(S' \to T') \in \Comp (S')$ 
and a morphism $T' \to T$ in $\MSm^\fin$ such that diagram 
\[\xymatrix{
S' \ar[r] & T' \ar[d] \\
S \ar[r] \ar[u] & T
}\]
commutes. 
If $S$ is normal, we can choose $S'$ to be normal.
\end{prop}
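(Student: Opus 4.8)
The plan is to pass to a partial compactification $S'$ of $S$ (whose output is partially compact) and then to realize $T'$ as an abstract square compactification of $S'$ whose boundary divisors have been inflated enough to admit maps to $T$. Concretely, I would first apply Theorem \ref{ex-pc} to $S$ and to the corner $(S(11) \to T(11)) \in \Comp (S(11))$, producing a partial compactification $S \to S'$ with $(S(11) \to S'(11)) \in \Comp (S(11))$ dominating $T(11)$ and with $S'(11) \to T(11)$ minimal; if $S$ is normal I would replace $S'$ by a normal partial compactification using Corollary \ref{cor:pc-normal}. Since $S'(11)$ is already proper, $\Comp (S'(11))$ reduces to the identity, so I set $T'(11) := S'(11)$, with $T'(11) \to T(11)$ the morphism just produced. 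The substance is to compactify the three remaining corners compatibly and equip each with a morphism to $T(ij)$.

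For each $(ij) \ne (11)$ the interiors agree, $S'(ij)^\o = S(ij)^\o = T(ij)^\o$, so there is a rational map $\ol{S}'(ij) \dashrightarrow \ol{T}(ij)$; let $a,b$ be the projections of its normalized graph to $\ol{S}'(ij)$ and $\ol{T}(ij)$, so $a$ is proper because $\ol{T}(ij)$ is proper. Over $\ol{S}(ij)$ the map $a$ is an isomorphism and $a^\ast S'(ij)^\infty = b^\ast T(ij)^\infty$, but over $\ol{S}'(ij) - \ol{S}(ij)$ the admissibility inequality $a^\ast S'(ij)^\infty \ge b^\ast T(ij)^\infty$ may fail, since minimality of $S'(11) \to T(11)$ makes $S'(ij)^\infty$ only the pullback of $T(11)^\infty$. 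I would repair this by inflating $S'$: writing $S'(11)^\infty = M^\infty + C$ with $|C| = \ol{S}'(11) - \ol{S}(11)$, replace it by $M^\infty + nC$. As $\ol{S}'(ij) - \ol{S}(ij)$ lies over $|C|$, the failure locus lies over $|C|$, so Lemma \ref{lem:increasing} furnishes an $n$ for which every $S'(ij) \to T(ij)$ becomes a morphism of modulus pairs; this inflation keeps $S \to S'$ a (normal) partial compactification.

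Next I would choose a square compactification $T'_0 \in \Comp (S')$, which exists because $\Comp (S')$ is cofiltered by Proposition \ref{p1}; its corners lie in $\Comp (S'(ij))$ and its structure maps are morphisms in $\MSm$, and $T'_0(11) = S'(11)$. It remains to promote each $S'(ij) \to T(ij)$ to $T'_0(ij) \to T(ij)$. On the normalized graph of $\ol{T}'_0(ij) \dashrightarrow \ol{T}(ij)$ the admissibility inequality already holds over $\ol{S}'(ij)$ by the previous step, while over the compactification boundary $\ol{T}'_0(ij) - \ol{S}'(ij)$ I would inflate the divisor of $T'_0(ij)$ along that boundary: this is legitimate precisely because minimality over $S'(ij)$ forbids altering the divisor inside $\ol{S}'(ij)$ but permits arbitrary enlargement outside it. A second application of Lemma \ref{lem:increasing}, performed in the order $(10),(01)$ then $(00)$ so as to keep the admissibility of the structure maps of the square, yields $T'$ with morphisms $T'(ij) \to T(ij)$ assembling into $T' \to T$ in $\MSm^\Sq$. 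Enlarging divisors outside $\ol{S}'(ij)$ preserves membership in $\Comp (S'(ij))$, the commutative square, and normality, and commutativity of the final diagram is checked on the dense interiors.

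The step I expect to be the main obstacle is the interplay between minimality and the two distinct boundaries: because $T'(ij) \to S'(ij)$ is required minimal, the partial-compactification boundary $\ol{S}'(ij) - \ol{S}(ij)$ cannot be handled by inflating $T'$ and must instead be absorbed into $S'$ itself, which is why the two inflations are separated and ordered as above. A secondary point, if the morphism $T' \to T$ is to be ambient (in $\MSm^\fin$) rather than merely a morphism in $\MSm^\Sq$, is to arrange in addition that $T'_0(ij)$ dominate the graph of $\ol{S}'(ij) \dashrightarrow \ol{T}(ij)$ --- a cofinal condition in $\Comp (S'(ij))$ obtainable through Lemmas \ref{lem:pc-modif} and \ref{lem:comp-modif} --- so that the projection to $\ol{T}(ij)$ is an honest morphism of schemes.
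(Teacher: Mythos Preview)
Your overall strategy---first pass to a partial compactification $S'$ via Theorem \ref{ex-pc}, then construct $T'\in\Comp(S')$ and arrange admissibility by inflating boundary divisors in two stages---matches the paper's. The two-boundary insight (that the minimal constraint on $T'(ij)\to S'(ij)$ forces the ``inner'' boundary $\ol{S}'(ij)-\ol{S}(ij)$ to be handled by modifying $S'$ and the ``outer'' boundary $\ol{T}'(ij)-\ol{S}'(ij)$ by modifying $T'$) is exactly right.

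The gap is in what you call the ``secondary point''. The statement requires $T'\to T$ in $\MSm^\fin$, i.e.\ ambient, so this is not optional. Your proposed fix---choose $T'_0(ij)$ cofinally in $\Comp(S'(ij))$ so that it ``dominates the graph of $\ol{S}'(ij)\dashrightarrow\ol{T}(ij)$''---does not work: for any $(\ol{S}'(ij)\hookrightarrow\ol{T}'_0(ij))\in\Comp(S'(ij))$, the rational map $\ol{T}'_0(ij)\dashrightarrow\ol{T}(ij)$ restricts to $\ol{S}'(ij)\dashrightarrow\ol{T}(ij)$, and if the latter is not a morphism of schemes, neither is the former. So this is not a cofinal condition in $\Comp(S'(ij))$; it is never satisfied there. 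Replacing $\ol{T}'_0(ij)$ by the graph of $\ol{T}'_0(ij)\dashrightarrow\ol{T}(ij)$ does give an ambient map, but the projection to $\ol{T}'_0(ij)$ is then not an isomorphism over $\ol{S}'(ij)$, so the result fails to lie in $\Comp(S')$; and if you pull $S'$ back along this blow-up you destroy the \'etale structure needed for $S'$ to remain an $\ulMV^\fin$-square. Lemmas \ref{lem:pc-modif} and \ref{lem:comp-modif} do not address this.

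The paper resolves this by an additional step you are missing: before inflating $S'$, it first \emph{makes the rational maps $\ol{S}'(ij)\dashrightarrow\ol{T}(ij)$ into honest morphisms of schemes}. The key input is Lemma \ref{lem:b-up}: since $\bigsqcup_{ij}\ol{S}'(ij)\to\ol{S}'(11)$ is \'etale, there is a closed subscheme $F\subset\ol{S}'(11)$, supported away from $\ol{S}(11)$, whose blow-up (pulled back along the \'etale cover) dominates all the graphs $\Gamma_{ij}$. Applying Lemma \ref{lem:pc-modif} with this $F$ gives a new partial compactification $S'_F$ which is still an $\ulMV^\fin$-square and for which $\ol{S}'_F(ij)\to\ol{T}(ij)$ are morphisms of schemes. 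Only after this does one inflate divisors and then run the second half (choose $T'_1\in\Comp(S')$, graph trick to get ambient maps $\ol{T}'_2(ij)\to\ol{T}(ij)$---now an isomorphism over $\ol{S}'(ij)$---and inflate once more). Without Lemma \ref{lem:b-up} or an equivalent \'etale-compatible resolution of the rational maps at the level of $S'$, the ambient requirement cannot be met while preserving $T'\in\Comp(S')$.
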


\begin{proof}
The first step of the proof is to find a partial compactification $S \to S'$ such that the morphism $S \to T$ extends to a morphism $S' \to T$ in $(\ulMSm^\fin )^\Sq$. 

By Theorem \ref{ex-pc},  take a partial compactification $S \to S'_1$ such that $(S(11) \to S'_1(11))$ dominates $(S(11) \to T(11))$ in $\Comp (S(11))$ and $S'_1(11) \to T(11)$ is minimal. 
For all $(ij) \in \Sq$, let $\Gamma_{ij}$ denote the graph of the rational map $\ol{S}'_1(ij) \dashrightarrow \ol{T}(ij)$. Note that the projection $\Gamma_{ij} \to \ol{S}'_1(ij)$ is a proper birational morphism which is an isomorphism over $\ol{S}(ij) = \ol{S}'_1(ij) \times_{\ol{S}'_1(11)} \ol{S}(11)$.
Consider the \'etale morphism 
\[
f : \bigsqcup_{(ij) \in \Sq} \ol{S}'_1(ij) \to \ol{S}'_1(11).
\]

By Lemma \ref{lem:b-up}, 
we can find a closed subscheme $F$ of $\ol{S}'_1 (11)$ which is supported on $|S'_1 (11)^\infty |$ and such that the base change of $f$ along the blow up $\Bl_{F_1} \ol{S}'_1(11) \to \ol{S}'_1(11)$ factors through $\sqcup_{i,j} \Gamma_{ij}$. 

Set $S'_2:= (S'_1)_F$, where the right hand side is defined as in Lemma \ref{lem:pc-modif}.
Then $S'_2$ is an $\ulMVfin$-square, and the morphism $S \to S'_1$  lifts uniquely to a partial compactification $S \to S'_2$.
Moreover, we have a morphism $\ol{S}'_2 \to \ol{T}$ in $\Sch^\Sq$ by construction.

Since $(S(11) \to S'_2(11)) \in \Comp (S(11))$, there exists an effective Cartier divisor $D_{11}$ on $\ol{S}'_2(11)$ such that $|D_{11}| = \ol{S}'_2(11) - \ol{S}(11)$.
Since $\ol{S}(ij) = \ol{S}'_2(ij) \times_{\ol{S}'_2(11)} \ol{S}(11)$ by definition of partial compactification, if we set $D_{ij} := D_{11} \times_{\ol{S}'_2(11)} \ol{S}'_2(ij)$, we have $|D_{ij}| = \ol{S}'_2(ij) - \ol{S}(ij)$.
By Lemma \ref{lem:increasing}, there exists a positive integer $n$ such that the morphism $\ol{S}'_2(ij) \to \ol{T}(ij)$ induces an admissible morphism 
\[
S'_3(ij) := (\ol{S}'_2(ij), S'_2(ij)^\infty + n D_{ij}) \to T(ij)
\]
for each $(ij) \in \Sq$, and they induce minimal morphisms $S(ij) \to S'_3 (ij)$. 
Since $\ol{S}'_3(ij) = \ol{S}'_2(ij)$ and since $S'_3(ij)^\infty \cap \ol{S}(ij) = S(ij)^\infty$ for each $(ij) \in \Sq$  by construction, the morphism $S \to S'_2$ lifts uniquely to a partial compactification $S \to S'_3$.

We set $S':=S'_3$. 
Then $(S \to S' )\in \Comp (S)$ and the morphism $S \to T$ lifts to a morphism $S' \to T$ in $\ulMSm^\fin$ by construction, as required. 
If $S$ is normal, we replace $S'$ by its normalization as in Corollary \ref{cor:pc-normal}.

Take now any $(S' \to T'_1) \in \Comp (S')$.
Then the graph trick shows that there exists $(S \to T'_2) \in \Comp (S')$ which dominates $S \to T'_1$ and the composite maps 
\[
\ol{h}_{ij} : \ol{T}'_2(ij) \to \ol{T}'_1(ij) \dashrightarrow \ol{T}(ij)
\] are morphisms of schemes. 

For each $(ij) \in \Sq$, let $F_{ij}$ be an effective Cartier divisor on $\ol{T}'(ij)$ such that $|F_{ij}| = \ol{T}'(ij) - \ol{S}'(ij)$, which exists by definition of $\Comp$.
By 
Lemma \ref{lem:increasing}, noting that $\ol{h}_{ij}^\ast T^\infty (ij) \cap \ol{S}'(ij) \leq S'^\infty (ij) = T'^\infty (ij) \cap \ol{S}'(ij)$, there exist positive integers $m_{ij}$ such that we have 
\[
\ol{h}_{ij}^\ast T^\infty (ij) \leq T'^\infty (ij) + m_{ij} F_{ij}
\]
for all $i,j \in \{0,1\}$. 
Define 
\[ T'_{m_{ij}} := (\ol{T}'(ij), T'^\infty (ij) + m_{ij} F_{ij}).\]
Clearly by choosing $\bm{m}=(m_{ij})_{i,j \in \{0,1\}}$ appropriately we obtain admissible morphisms $T'_{m_{ij}} \to T'_{m_{ij}}$ for any morphism $(i,j) \to (i',j')$ in the diagram category $\Sq$.
Therefore, we obtain a diagram
\[T':=T'_{\bm{m}}: \vcenter{\xymatrix{
T'_{m_{00}} \ar[r] \ar[d] & T'_{m_{01}} \ar[d] \\
T'_{m_{10}} \ar[r] & T'_{m_{11}}
}}\]
which belongs to $\Comp(S')$, and its image in $\Comp (S)$ dominates $T$.
This finishes the proof.
\end{proof}

\subsection{End of proof of Theorem \ref{main:cofinality-MV}}
Let $S$ be a normal $\ulMVfin$-square, and $(j:S \to T) \in \Comp (S)$ a compactification. 
Apply Proposition \ref{prop:partial-cofinality}.
By Theorem \ref{main:cofinality-MV-partial}, there exists $(j'_1:S' \to T'_1) \in \Comp^{\MV} (S')$ which dominates $S' \to T'$.
Thus we have the following commutative diagram in $\ulMSm$:
\[\xymatrix{
& T'_1 \ar[d] \\
S' \ar[r]^{j'} \ar[ru]^{j'_1} & T' \ar[d]\\
S\ar[r]^j \ar[u]^f & T
}\]

Note that $j'_1 f$ is ambient and minimal, and it induces an isomorphism $S^\o \xrightarrow{\sim} (T'_1)^\o$ by assumption. 
Therefore, by Lemma \ref{lem:comp-modif}, there exists an isomorphism $g:T'_1 \xrightarrow{\sim} T'_2$ in $\MSm$ such that $(gj'_1f :S \to T'_2) \in \Comp (S)$.
Since $T'_1$ is an $\MV$-square by construction, so is $T'_2$.
Moreover, $S \to T'_2$ dominates $S \to T$ in $\Comp (S)$ by construction. 
This finishes the proof.

\section{Continuity and cocontinuity}\label{section:cocontinuity}

In this section, we prove Theorem \ref{thm:cocontinuity}.

\subsection{Continuity} Note that $\tau_s$, $\ulomega_s$ and $\omega_s$ all preserve fibre products, as required in Proposition \ref{pA.1} (1).
The continuity of $\ulomega_s$ and $\lambda_s$ is obvious by Lemma \ref{lemma:cocontinuity-cd} a), since they send distinguished squares to distinguished squares. This implies the cocontinuity of $\lambda_s$, by Proposition \ref{pA.2} (2).
In the case of $\tau_s$, by the same lemma we must show that $\{\tau_s T(01),\tau_s T(10)\}$ is an $\ulMV$-cover of $\tau_s T(11)$ for any $\MV$-square $T=(T(ij))$. By \cite[Exp. II, Th. 4.4]{SGA4}, this is the case if and only if, for any sheaf of sets $F$ on $\ulMSm$, the map
\[F(\tau_s (T(11)))\to F(\tau_s (T(01)))\times F(\tau_s (T(10)))\]
is injective.
By Condition (2) of Definition \ref{def:new-MV}, there is an $\ulMV$-square $S$ mapping to $\tau_s T$ and such that $S(11)\iso  \tau_s T(11)$, hence the conclusion.
But Condition (2) of Definition \ref{def:new-MV} says that this morphism is dominated by a morphism $S(01)\sqcup S(10)\to \tau_s (T(11))$ for some $\ulMV$-square $S$ such that $S(11)\iso  \tau_s(T(11))$, hence the conclusion. 
Finally, $\omega_s$ is continuous as a composition of continuous functors. 

\subsection{Cocontinuity} 
It suffices to check the conditions of Lemma \ref{lemma:cocontinuity-cd} for $\ulomega_s$ and $\omega_s$.
The Nisnevich cd-structure on $\Sm$ is complete
\cite[Th. 2.2]{unstableJPAA}. 
Therefore, Condition (1) holds for $\ulomega_s$ and $\omega_s$. We now check that the both satisfy Condition (2).

Take any $M \in \ulMSm$ and take any elementary Nisnevich square $S$ in $\Sm$ such that $\ulomega_s (M) \cong S(11)$. 
Take any $M_c \in \Comp (M)$. Then, regarding $S$ as a square in $\ulMSm$, Corollary \ref{cor:pc-normal} implies that there exists a partial compactification $S \to S'$ such that $S'$ is normal, $S'(11) \in \Comp (S(11)) = \Comp (M^\o)$ dominates $M_c \in \Comp (M) \subset \Comp (M^\o)$ and the morphism $S'(11) \to M_c$ is minimal. In particular, $S'(11) \to M_c$ is an isomorphism in $\MSm$.

\subsubsection{Case of $\protect\ulomega_s$}
Define a square $S_1$ in $\ulMSm$ as follows: for any $(ij) \in \Sq$, set
\begin{align*}
\ol{S}_1 (ij) &= \ol{S}'(ij) \times_{\ol{M}_c } \ol{M} ,\\
S_1^\infty (ij) &= S'^\infty (ij) \times_{\ol{M}_c } \ol{M} ,\\
S_1 (ij) &= (\ol{S}_1(ij),S_1^\infty (ij)).
\end{align*}

Then $S_1$ is an $\ulMVfin$-square (hence an $\ulMV$-square).
Moreover, the natural morphism $S_1 (11) \to M$ is an isomorphism in $\ulMSm$ since it is minimal, $\ol{S}_1 (11) \to \ol{M}$ is proper surjective and $S_1^\o (11) = S^\o(11) = M^\o$.
Therefore, the square $S'_1$ obtained by replacing $S_1(11)$ in $S_1$ with $M$ is also an $\ulMV$-square, and we have $\ulomega_s (S'_1)=S$. 
Thus, Condition (2) of Lemma \ref{lemma:cocontinuity-cd} holds for $\ulomega_s$, and therefore $\ulomega_s$ is cocontinuous. 

\subsubsection{Case of $\protect\omega_s$}

Here we take $M\in \MSm$, hence $M=M_c$. 
By Theorem \ref{main:cofinality-MV},
the category $\Comp^{\MV} (S')$ is cofinal in $\Comp (S')$. 
In particular, it is non-empty. Take any object $T \in \Comp^{\MV} (S')$.
Then $T$ is by definition an $\MV$-square such that $T(11) = S'(11) \cong M$.
Therefore, the square $T'$ obtained by replacing $T(11)$ in $T$ with $M$ is also an $\MV$-square, and we have $\omega_s (T') = S$. 
Thus, $\omega_s$ satisfies Condition (2) in Lemma \ref{lemma:cocontinuity-cd}.

This finishes the proof of Theorem \ref{thm:cocontinuity}.

\begin{remark}\label{r5.1} The functor $\tau_s$ is not cocontinuous: take $M\in \MSm$ of dimension $1$. Cover $\tau_s M$ by two affine opens $S(01),S(10)$ (with the minimal modulus structure). Let $f:N\to M$ be a morphism in $\MSm$. If $\tau_s f$ factors through $S(01)$ or $S(10)$, then its image is finite since $\ol{N}$ is proper. But any term of a cover of $N$ is surjective on the ambient spaces: contradiction. \end{remark}

\appendix

\section{Continuous and cocontinuous functors}\label{sec:coconti}

\subsection{Review of the notions}\label{s:co}
 We write $\hat \sC$ for the category of pre\-sheaves of sets on a category $\sC$. If $\sC$ is a site, we write $\tilde{\sC}\subset \hat\sC$ the full subcategory of sheaves of sets, $i_\sC$ for this inclusion and $a_\sC$ for its left adjoint (sheafification).
 
In this subsection, we recall some facts from \cite[Exp. III]{SGA4}, where all sites are assumed to be ``$U$-sites'' in the sense of \cite[II.3.0.2]{SGA4}. We implicitly make this assumption below; it is automatic for sites defined by a cd-structure.

\begin{definition}[\protect{\cite[III.1.1 and III.2.2]{SGA4}}]\label{def:coconti}
Let $\sC$ and $\sD$ be sites, 
and $u : \sC \to \sD$ a functor. We say that $u$ is \emph{continuous} (resp. \emph{cocontinuous}) if the functor $u^*:\hat\sD\to \hat\sC$ (resp. $u_*:\hat\sC\to \hat \sD$) carries sheaves to sheaves.
\end{definition}

\begin{prop}\label{pA.1} \
\begin{enumerate}
\item \cite[III.1.6]{SGA4}.
Suppose that $u$ preserves the fibre products involved in base changes under morphisms coming from covering families of $\sC$. Then $u$ is continuous 
if and only if, for any cover $\{ U_i \to X \}$ in $\sC$,
$\{ u(U_i) \to u(X) \}$ is a cover in $\sD$.
\item The functor $u$ is cocontinuous
if and only if, for any $X \in \sC$
and any cover $\{ V_i \to u(X) \}$ in $\sD$,
there is a cover $\{ U_j \to X \}$ in $\sC$
such that $\{ u(U_j) \to u(X) \}$ refines $\{ V_i \to u(X) \}$ 
(i.e. for each $j$, 
$u(U_j) \to u(X)$ factors through $V_i$ for some $i$).
\end{enumerate}
\end{prop}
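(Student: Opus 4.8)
The plan is to obtain both parts directly from \cite[Exp. III]{SGA4}, the only genuine content being the dictionary between the sieve-theoretic language used there and the language of covering families used here. Since the sites attached to our cd-structures are $U$-sites whose topologies are generated by the pretopologies of covering families, part (1) is literally \cite[III.1.6]{SGA4}: the hypothesis that $u$ preserve the fibre products occurring in base changes along members of covering families is exactly the hypothesis of loc. cit., and the displayed cover-preservation criterion is its conclusion. So for part (1) I would simply invoke that reference, after remarking that our sites satisfy its standing assumptions.

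For part (2) I would first recall the sieve characterization: by \cite[III.2.1]{SGA4} (equivalent to Definition \ref{def:coconti}, via the interpretation of cocontinuity in terms of $u_*$), the functor $u$ is cocontinuous if and only if, for every $X \in \sC$ and every covering sieve $R$ of $u(X)$ in $\sD$, the pullback sieve
\[
u^{-1}(R) := \{\, (W \to X) \in \sC \mid (u(W) \to u(X)) \in R \,\}
\]
is a covering sieve of $X$. My plan is then to show that this sieve condition is equivalent to the refinement condition in the statement, using that the topology on each site comes from a pretopology, so that a sieve is covering \emph{if and only if} it contains a covering family.

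The translation runs in both directions. For the forward implication, given a covering family $\{V_i \to u(X)\}$, let $R$ be the sieve it generates; the sieve condition says $u^{-1}(R)$ covers $X$, hence contains a covering family $\{U_j \to X\}$, and $(U_j \to X) \in u^{-1}(R)$ means precisely that $u(U_j) \to u(X)$ factors through some $V_i$ — the required refinement. Conversely, any covering sieve $R$ of $u(X)$ contains a covering family $\{V_i \to u(X)\}$; applying the refinement condition to it yields a covering family $\{U_j \to X\}$ with each $u(U_j) \to u(X)$ factoring through some $V_i \in R$, so each $u(U_j) \to u(X)$ lies in $R$ (as $R$ is closed under precomposition), whence $\{U_j\} \subset u^{-1}(R)$ and $u^{-1}(R)$ covers. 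Thus one direction reduces the sieve condition to covering families and the other reconstitutes it from them.

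I do not expect a real obstacle here: the argument is entirely the dictionary between covering sieves and covering families, and it is worth noting that, unlike part (1), cocontinuity requires no preservation of fibre products. The only point needing care is the reduction establishing that it suffices to test the sieve condition on sieves generated by covering families, which rests on the compatibility of $u^{-1}(-)$ with inclusions of sieves and on monotonicity of the covering property; these are routine for pretopology-generated topologies such as ours.
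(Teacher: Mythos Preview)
Your proposal is correct and follows essentially the same route as the paper: part (1) is a direct citation of \cite[III.1.6]{SGA4}, and for part (2) the paper invokes \cite[III, 2.1 and 2.2]{SGA4} to obtain the sieve characterization (that $u$ is cocontinuous iff for every covering sieve $R$ of $u(Y)$ the sieve $u^{-1}(R)$ covers $Y$) and then declares this ``clearly equivalent'' to the refinement condition. You simply spell out that equivalence via the pretopology dictionary, which is a welcome elaboration rather than a different argument.
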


\begin{proof}[Proof of (2)] By \cite[III, 2.1 and 2.2]{SGA4}, $u$ is cocontinous if and only if, for any $Y\in \sC$ and any covering sieve $R$ of $u(Y)$, the sieve of $Y$ generated by the arrows $Z\to Y$ such that $u(Z)\to u(Y)$ factors through $R$ is a covering sieve. This condition is clearly equivalent to the one stated in (2).
\end{proof}

\begin{prop}\label{pA.2} \
\begin{enumerate}
\item  \cite[III.1.3]{SGA4} If $u$ is continuous, the functor $u^t:\tilde\sD\to \tilde\sC$ given by Definition \ref{def:coconti} has a left adjoint $u_t$, given by the formula $u_t = a_\sC u_! i_\sD$.
\item  \cite[III.2.5]{SGA4} Let $v$ be left adjoint to $u$. Then $u$ is continuous if and only if $v$ is cocontinuous.
\end{enumerate}
\end{prop}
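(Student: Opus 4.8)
The plan is to dispose of both parts by chasing adjunctions, with no geometric input: the only ingredients are the sheafification adjunctions $a_\sC\dashv i_\sC$, $a_\sD\dashv i_\sD$ and the Kan-extension adjunctions $u_!\dashv u^*\dashv u_*$ on presheaf categories (all of which exist, $\hat\sC$ and $\hat\sD$ being complete and cocomplete). For (1), since $u$ is continuous the restriction $u^t:\tilde\sD\to\tilde\sC$ of $u^*$ is well defined and satisfies $u^* i_\sD=i_\sC u^t$ by construction. I would then exhibit the left adjoint as $u_t=a_\sD u_! i_\sC$ (for the composite to typecheck the subscripts in the stated formula must read in this order) and verify the adjunction directly: for $F\in\tilde\sC$ and $G\in\tilde\sD$,
\begin{align*}
\Hom_{\tilde\sD}(a_\sD u_! i_\sC F,\, G)
&\cong \Hom_{\hat\sD}(u_! i_\sC F,\, i_\sD G) \\
&\cong \Hom_{\hat\sC}(i_\sC F,\, u^* i_\sD G) \\
&\cong \Hom_{\tilde\sC}(F,\, u^t G),
\end{align*}
where the first isomorphism is $a_\sD\dashv i_\sD$, the second is $u_!\dashv u^*$, and the third combines $u^* i_\sD=i_\sC u^t$ with the full faithfulness of $i_\sC$. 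This gives $u_t\dashv u^t$.

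For (2), the idea is to lift the adjunction $v\dashv u$ from the sites to the presheaf categories. Precomposition is a $2$-functor that is contravariant on $1$-cells and reverses $2$-cells, so applying it to the data $\eta:\id_\sD\to uv$, $\epsilon:vu\to\id_\sC$ of $v\dashv u$ produces an adjunction $v^*\dashv u^*$ between $v^*:\hat\sC\to\hat\sD$ and $u^*:\hat\sD\to\hat\sC$, whose unit $\epsilon^*:\id_{\hat\sC}\to u^* v^*$ and counit $\eta^*:v^* u^*\to\id_{\hat\sD}$ satisfy the triangle identities by $2$-functoriality. Hence $u^*$ is \emph{a} right adjoint of $v^*$, and by uniqueness of adjoints $u^*\cong v_*$, the right Kan extension along $v$. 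The equivalence now reads off the definitions: $u$ is continuous iff $u^*:\hat\sD\to\hat\sC$ preserves sheaves, whereas $v$ is cocontinuous iff $v_*:\hat\sD\to\hat\sC$ preserves sheaves, and these two functors coincide.

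I do not expect a genuine obstacle, as both statements are diagram chases. The one point demanding care is the variance bookkeeping in (2): because precomposition reverses the direction of functors, $v\dashv u$ yields $v^*\dashv u^*$ (and not $u^*\dashv v^*$), so that the correct identification is $u^*\cong v_*$ rather than $u^*\cong v_!$. It is precisely this identification that matches \emph{continuity} of $u$ with \emph{cocontinuity} of $v$, and getting the handedness right is the crux of the argument.
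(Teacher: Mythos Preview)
The paper does not actually supply a proof of this proposition: both parts are stated with citations to \cite[III.1.3, III.2.5]{SGA4} and no further argument. So there is nothing to compare against in the paper itself; your proposal fills in what the paper leaves to the reference.

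Your arguments are correct and are essentially the standard ones. In part (1) you rightly observe that the formula as printed, $u_t = a_\sC u_! i_\sD$, does not typecheck: with $u:\sC\to\sD$ one has $u_!:\hat\sC\to\hat\sD$, so the composite must be $a_\sD\, u_!\, i_\sC$. Your chain of adjunction isomorphisms then establishes $u_t\dashv u^t$ cleanly. In part (2) the key computation is exactly the one you give: from $v\dashv u$ one obtains $v^*\dashv u^*$ on presheaves (precomposition reverses both the 1-cells and the 2-cells, so the unit/counit become $\epsilon^*:\id\to u^*v^*$ and $\eta^*:v^*u^*\to\id$), whence $u^*\cong v_*$ by uniqueness of right adjoints, and the equivalence of ``$u$ continuous'' with ``$v$ cocontinuous'' follows immediately from the definitions. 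Your warning about the handedness is well placed: it is precisely the identification $u^*\cong v_*$ (and not $u^*\cong v_!$) that one needs.
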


\begin{comment}
\begin{definition}\label{def:coconti}
Let $\sC$ and $\sC'$ be sites, 
and $f : \sC \to \sC'$ a functor.
\begin{enumerate}
\item 
We say $f$ is \emph{continuous} 
if for any cover $\{ U_i \to X \}$ in $\sC$ 
it holds that
$\{ f(U_i) \to f(X) \}$ is a cover in $\sC'$,
and that
$f(Y \times_X U_i) \cong f(Y) \times_{f(X)} f(U_i)$
for any $Y \to X$ in $\sC$.
\item
We say $f$ is \emph{cocontinuous} 
if for any $X \in \sC$
and a cover $\{ V_i \to f(X) \}$ in $\sC'$,
there is a cover $\{ U_j \to X \}$ in $\sC$
such that $\{ f(U_j) \to X \}$ refines $\{ V_i \to f(X) \}$.
(i.e. for each $j$, 
$f(U_j) \to X$ factors through $V_i$ for some $i$).
\end{enumerate}
\end{definition}
\end{comment}

\subsection{The case of cd-structures}

\begin{lemma}\label{lemma:cocontinuity-cd}
Let $\mathcal{C}$ and $\mathcal{D}$ be sites and let $u : \mathcal{C} \to \mathcal{D}$ be a functor. 
Assume that the topologies on $\mathcal{C}$ and $\mathcal{D}$ are generated by \textit{cd}-structures $P_{\mathcal{C}}$ and $P_{\mathcal{D}}$, respectively. \\
a) Assume that $P_\sC$ is complete and that $u$ verifies the condition of Proposition \ref{pA.1}. Then $u$ is continuous if and only if it sends elementary covers to covers.  \\
b) Assume that
\begin{enumerate}
\item $P_{\mathcal{D}}$ is complete.
\item For any $X' \in \mathcal{C}$ and for any distinguished square $Q \in P_{\mathcal{D}}$ such that $Q(11)=u(X')$, there exists $Q' \in P_{\mathcal{C}}$ such that $u(Q') = Q$ and $Q'(11) = X'$.
\end{enumerate}
Then $u$ is cocontinuous.
\end{lemma}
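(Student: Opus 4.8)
The plan is to treat the two statements separately, reducing each to the combinatorial description of the covering families and sieves of a cd-structure topology, and feeding in the hypotheses at the single point where each is genuinely needed.

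For a), the ``only if'' direction is immediate: by Proposition \ref{pA.1} (1) a continuous $u$ carries every cover of $\sC$ to a cover of $\sD$, in particular every elementary cover. For the converse I would show that the class
\[
\mathcal{S} := \{\, R \text{ a sieve on some } X \in \sC : \langle u(R)\rangle \text{ is } t_{P_\sD}\text{-covering}\,\}
\]
(where $\langle u(R)\rangle$ denotes the sieve on $u(X)$ generated by the images of the arrows of $R$) is a Grothendieck topology on $\sC$. The maximality axiom is immediate, while stability under base change and transitivity both rest on the assumption that $u$ preserves the fibre products occurring in base changes (the ``condition of Proposition \ref{pA.1}''): this gives $\langle u(g^*R)\rangle = u(g)^*\langle u(R)\rangle$ for a sieve $R$ and a morphism $g$, whence both axioms follow from the corresponding axioms in $\sD$. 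Since $P_\sC$ is complete, $t_{P_\sC}$ is generated by its elementary covers \cite{cdstructures}; as $\mathcal{S}$ is a topology containing every elementary covering sieve (this is exactly ``$u$ sends elementary covers to covers'', including the empty cover of the initial object if present), we conclude $t_{P_\sC}\subseteq \mathcal{S}$. Thus $u$ sends every cover to a cover, and is continuous by Proposition \ref{pA.1} (1).

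For b), I would use the sieve-theoretic reformulation of cocontinuity from the proof of Proposition \ref{pA.1} (2): it suffices to prove that for every $Y \in \sC$ and every $t_{P_\sD}$-covering sieve $R$ of $u(Y)$, the sieve
\[
S_R := \{\, f : Z \to Y \text{ in } \sC : u(f) \in R \,\}
\]
on $Y$ is $t_{P_\sC}$-covering. Because $P_\sD$ is complete, $R$ contains a covering sieve obtained from maximal sieves by finitely many applications of the following operation \cite{cdstructures}: given a distinguished square $Q$ and covering sieves on $Q(01)$ and $Q(10)$, form the sieve generated by composing the two projections $Q(01)\to Q(11)$, $Q(10)\to Q(11)$ with those sieves. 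This finite-height generation is what lets the argument terminate.

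I would then induct on the number of such steps. In the base case $R$ is maximal, so $\id_Y\in S_R$ and $S_R$ is maximal. In the inductive step $R$ is built from a distinguished square $Q$ with $Q(11)=u(Y)$ and covering sieves $R_{01},R_{10}$ on $Q(01),Q(10)$; hypothesis (2) lifts $Q$ to a distinguished square $Q'$ in $\sC$ with $Q'(11)=Y$ and $u(Q')=Q$, so $u(Q'(01))=Q(01)$ and $u(Q'(10))=Q(10)$. Applying the induction hypothesis to $Q'(01),Q'(10)$ shows that $S_{R_{01}}$ and $S_{R_{10}}$ are covering in $\sC$; since the elementary cover attached to $Q'$ is covering, transitivity of $t_{P_\sC}$ shows that the sieve on $Y$ generated by composing $Q'(01)\to Y$ and $Q'(10)\to Y$ with $S_{R_{01}},S_{R_{10}}$ is covering. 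Applying $u$ to its generators shows this sieve is contained in $S_R$, so $S_R$ is covering, and $u$ is cocontinuous by Proposition \ref{pA.1} (2). The lifting step itself is painless, as hypothesis (2) is tailored to it; the real obstacle in both parts is isolating and citing the correct consequence of completeness — for a) that a complete $P_\sC$ generates $t_{P_\sC}$ through elementary covers and that the pullbacks needed in the base-change axiom exist, and for b) the finite recursive description of $t_{P_\sD}$-covering sieves in terms of distinguished squares. I would pin both down by appeal to \cite[\S2]{cdstructures} and the summary in \cite[A.8]{modsheaf1}, taking care that these hold for a merely complete (not necessarily regular) cd-structure.
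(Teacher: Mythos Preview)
For part b) your argument is essentially the paper's: both use completeness of $P_{\sD}$ to refine an arbitrary cover of $u(Y)$ by a simple cover and then induct on its recursive structure, lifting the outermost distinguished square via hypothesis (2) and applying the induction hypothesis to the two legs. The difference is purely cosmetic (sieves versus families).

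For part a) there is a gap in the base-change verification for $\mathcal{S}$. To obtain the inclusion $\langle u(g^*R)\rangle \supseteq u(g)^*\langle u(R)\rangle$ you need $u$ to preserve the fibre product $Z\times_X Y$ for every arrow $f\colon Z\to X$ lying in $R$; but the condition in Proposition~\ref{pA.1}(1) only guarantees this when $f$ belongs to a covering family of $\sC$, and a sieve $R\in\mathcal{S}$---defined solely by the requirement that its image cover in $\sD$---has no reason to be covering in $\sC$. (Transitivity, by contrast, needs only the easy inclusion $\langle u(f^*R')\rangle \subseteq u(f)^*\langle u(R')\rangle$, which uses no fibre products, so your attribution of that axiom to the fibre-product hypothesis is also inaccurate.) The gap is patched by working instead with the class of $\sC$-covering sieves $R$ for which $\langle u(R)\rangle$ covers in $\sD$: then every $f\in R$ lies in a covering family (namely $R$ itself), the hypothesis applies, and one checks this smaller class is a topology containing the elementary sieves, hence equals the topology on $\sC$. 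The paper sidesteps the issue by a direct induction on simple covers: a simple cover is an elementary cover composed with two strictly smaller simple covers, so one only needs that $u$ sends elementary covers to covers and that covers in $\sD$ are stable under composition---no base change, and hence no fibre-product condition, is invoked until the final appeal to Proposition~\ref{pA.1}(1) converting ``sends covers to covers'' into ``continuous''.
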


\begin{proof} a) Necessity is obvious. 
Sufficiency: since $P_\sC$ is complete, any cover can be refined by a simple cover as in \cite[Def. 2.2]{cdstructures} (this is the definition of complete, see loc. cit., Def. 2.3). Therefore, it suffices to show that $u$ sends simple covers  to covers. For any simple cover $\sV=\{V_l \to X\}_{l \in L}$ of an object $X\in \sC$, define $n_\sV := |L|$ (note that $n_\sV$ is finite).  We prove the assertion by induction on $n_\sV \geq 1$. 

If $n_\sV = 1$, then $\sV$ consists of an isomorphism since if a simple cover is obtained by composing an elementary cover at least once, then the cardinality of the indexing set must be $>1$. Therefore the statement is trivial.

Assume that $n_\sV > 1$ and take a distinguished square $Q \in P_\sC$ of the form 
\[\xymatrix{
B \ar[r] \ar[d]&  Y \ar[d]^p \\
A \ar[r]^e & X
}\]
and simple covers $\mathcal{Y} = \{p_s : Y_s \to Y\}_{s \in S}$ and $\mathcal{A} = \{q_t : A_t \to A\}_{t \in T}$ such that $\mathcal{V}=\{p \circ p_s , e \circ q_t \}_{s \in S, t \in T}$.
Then obviously we have $n_{\mathcal{Y}}< n_{\mathcal{V}}$ and $n_{\mathcal{A}} < n_{\mathcal{V}}$, and we are done by induction.

b) Take any $X' \in \mathcal{C}$. 
By the same argument as in a), it suffices to show the following assertion: for any simple cover $\mathcal{V}=\{V_j \to X\}_{j\in J}$ there exists a cover $\mathcal{U}=\{U_i \to X'\}_{i \in I}$ such that $u(\mathcal{U})$ refines $\mathcal{V}$. We proceed by induction on $n_{\mathcal{V}} \geq 1$, the case $n_{\mathcal{V}} = 1$ being trivial as above.

Assume that $n_{\mathcal{V}} > 1$. With the same notation as in a), but with $Q \in P_{\mathcal{D}}$, 
by (2), there exists $Q'\in P_{\mathcal{C}}$ of the form 
\[\xymatrix{
B' \ar[r] \ar[d]&  Y' \ar[d]^{p'} \\
A' \ar[r]^{e'} & X' 
}\]
such that $u(Q')=Q$. In particular, we have $u(Y')=Y$ and $u(A')=A$. 
By the induction hypothesis, there exist covers $\mathcal{Y}'=\{p'_{s'} : Y'_{s'} \to Y'\}_{s' \in S'}$ and $\mathcal{A}' = \{q'_{t'} : A'_{t'} \to A'\}_{t' \in T'}$ such that $u(\mathcal{Y}')$ and $u(\mathcal{A}')$ refine $\mathcal{Y}$ and $\mathcal{A}$, respectively. 
Then we obtain a cover $\mathcal{U}=\{p' \circ p'_{s'} , e' \circ q'_{t'}\}_{s'\in S',t' \in T'}$ of $X'$ such that  $u(\mathcal{U})=\{p \circ u(p'_{s'}) , e \circ u(q'_{t'})\}_{s'\in S',t' \in T'}$ refines $\mathcal{V}$ by construction. This finishes the proof.
\end{proof}

\section{A criterion for an open immersion} 

In this appendix, we state an important result that we found in \cite[Tag:081M]{stack}\footnote{\url{https://stacks.math.columbia.edu/tag/081M}} (see also \cite[Cor. 2.2]{lazard} and \cite[Th. 2.7]{oda} in the affine case). We include its proof for completeness.

\setcounter{subsection}{1}

\begin{thm}\label{strong-lemma} 
Let $f : X \to S$ be a morphism of schemes with $X$ Noetherian, and let $U \subset S$ an open subset.
Assume the following conditions hold:
\begin{thlist}
\item  $f$ is separated, locally of finite presentation, and flat.
\item $f^{-1} (U) := U \times_S X \to U$ is an isomorphism.
\item The inclusion $U \to S$ is quasi-compact and scheme-theoretically dense.
\end{thlist} 
Then, $f$ is an open immersion. 
\end{thm}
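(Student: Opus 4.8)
The plan is to reduce the statement to showing that $f$ is a monomorphism, after which one invokes the classical characterization of open immersions among morphisms locally of finite presentation: a flat monomorphism locally of finite presentation is an open immersion (\cite[17.9.1]{EGA4-4}; alternatively, a monomorphism is unramified, so being flat and locally of finite presentation it is \'etale, and an \'etale monomorphism is an open immersion). Thus the entire problem reduces to proving that the diagonal $\Delta_f : X \to X \times_S X$ is an isomorphism.

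Since $f$ is separated by (i), the diagonal $\Delta_f$ is a closed immersion; let $\mathcal{I} \subset \mathcal{O}_{X \times_S X}$ denote its ideal sheaf. First I would check that $\Delta_f$ is an isomorphism over $U$. Writing $W := f^{-1}(U) = U \times_S X$, the base change of $\Delta_f$ along $U \to S$ is the relative diagonal $W \to W \times_U W$, and since $W \to U$ is an isomorphism by (ii), this relative diagonal is an isomorphism. Equivalently, $\mathcal{I}$ restricts to zero on the open subscheme $g^{-1}(U) = (X \times_S X) \times_S U$, where $g : X \times_S X \to S$ is the structure morphism.

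The heart of the argument is to upgrade this to $\mathcal{I} = 0$, and for this I would prove that $g^{-1}(U)$ is scheme-theoretically dense in $X \times_S X$. This is where flatness enters: $g = f \circ \pr_1$ is a composite of flat morphisms, since $\pr_1$ is a base change of the flat map $f$; hence $g$ is flat. Moreover $j : U \to S$ is a quasi-compact (hence quasi-compact and quasi-separated) open immersion which is scheme-theoretically dense by (iii), i.e. $\mathcal{O}_S \to j_* \mathcal{O}_U$ is injective. Denoting by $j' : g^{-1}(U) \to X \times_S X$ the base change of $j$, the flat base-change isomorphism $g^* j_* \mathcal{O}_U \cong j'_* \mathcal{O}_{g^{-1}(U)}$ (valid because $j$ is quasi-compact quasi-separated and $g$ is flat), together with the exactness of $g^*$, shows that $\mathcal{O}_{X \times_S X} \to j'_* \mathcal{O}_{g^{-1}(U)}$ is injective; this is precisely scheme-theoretic density of $g^{-1}(U)$.

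To conclude, I would note that the composite $\mathcal{I} \hookrightarrow \mathcal{O}_{X \times_S X} \to j'_* \mathcal{O}_{g^{-1}(U)}$ factors through $j'_* (\mathcal{I}|_{g^{-1}(U)}) = 0$, while the map $\mathcal{O}_{X \times_S X} \to j'_* \mathcal{O}_{g^{-1}(U)}$ is injective; hence $\mathcal{I} = 0$, so $\Delta_f$ is an isomorphism and $f$ is a monomorphism, which finishes the proof. The Noetherian hypothesis on $X$ serves only to keep the schemes and open immersions in play quasi-compact and quasi-separated, so that scheme-theoretic density may be tested on structure sheaves. The main obstacle is the density-propagation step: once the flat base-change identity is available the remainder is formal, so the real content lies in verifying that scheme-theoretic density of $U$ in $S$ survives pullback along the flat morphism $g$.
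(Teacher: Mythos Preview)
Your proof is correct and follows a genuinely different route from the paper's. The paper reduces to showing that $f$ is finite after replacing $S$ by the open image $f(X)$: it first checks that $f$ is quasi-finite (flat and birational forces relative dimension $0$), then invokes the \'etale localization of quasi-finite morphisms to split off a finite piece over an \'etale neighbourhood, uses the compatibility of schematic images with flat base change (their Proposition on schematic images) to see that this finite piece is everything, and finally descends finiteness along the fpqc cover. The finite case is then handled by the degree argument.

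Your argument bypasses the quasi-finite/\'etale-localization/descent machinery entirely by attacking the monomorphism property directly. The single substantive input---that scheme-theoretic density of $U$ in $S$ propagates along the flat structure map $g:X\times_S X\to S$---is exactly the same flat-base-change-of-schematic-images principle the paper invokes, but you apply it once to $X\times_S X$ rather than iterating through an \'etale cover. This is shorter and more elementary; the paper's route has the minor side benefit of explicitly exhibiting $f$ as an isomorphism onto its open image via finiteness, but for the stated conclusion your approach is cleaner.
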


\begin{proof}
First, consider the case that $f$ is finite. Then, the proof is easy. Indeed, since $f$ is finite flat of finite presentation, it is finite locally free.
Since $f$ is an isomorphism over a dense open subset, its degree is equal to $1$, hence it is an isomorphism everywhere.

\ 

Next, we treat the general case.
Since $f$ is flat of finite type, the image $f(X) \subset S$ is open. Therefore, we may assume that $f$ is surjective.
We want to prove that $f$ is an isomorphism. 
By the above argument, it suffices to prove that $f$ is finite.

\begin{claim}
$f$ is quasi-finite.
\end{claim}

\begin{proof}
Since the problem is local on $S$, we may assume that $S$ is affine.
Then, since $X$ is quasi-compact by assumption, the morphism $f$ is quasi-compact. 
Moreover, $f$ is of relative dimension $0$ since $f$ is flat and birational.
Therefore, $f$ is quasi-compact and locally quasi-finite, hence quasi-finite.
\end{proof}

We need the following propositions from \cite[\S 2.3 Prop. 8 (a), \S 2.5 Prop. 2]{neronmod}:
\begin{prop}[\'etale localization of quasi-finite morphisms]\label{et-loc-of-qfin}
Let $f : X \to Y$ be locally of finite type. Let $x$ be a point of $X$, and set $y:=f(x)$.

If $f$ is quasi-finite at $x$, then there exists an \'etale neighborhood $Y' \to Y$ of $y$ such that the morphism $f' : X' \to Y'$, obtained from $f$ by the base change $Y' \to Y$ induces a finite morphism $f' |_{U'} : U' \to Y'$, where $U'$ is an open neighborhood of the fiber of $X' \to X$ above $x$.
In addition, if $f$ is separated, $U'$ is a connected component of $X'$. \qed
\end{prop}

\begin{prop}[compatibility between schematic images and flat base changes]\label{sch-im-flat-bc}
Let $f : X \to Y$ be an $S$-morphism which is quasi-compact and quasi-separated.
Let $g : S' \to S$ be a flat morphism, and denote by $f':X'\to Y'$ be the $S'$-morphism obtained from $f$ by base change.
Let $Z$ (resp. $Z'$) be the schematic image of $f$ (resp. $f'$).
Then, $Z \times_S S'$ is canonically isomorphic to $Z'$. \qed
\end{prop}

Since the finiteness of $f$ is Zariski local on $S$, it suffices to check it over an open neighborhood of a fixed point $s \in S$.
Take a point $x \in X$ above $s$.
Take an \'etale neighborhood $g : S' \to S$ of $s$ as in Proposition \ref{et-loc-of-qfin}, and set $X' := X \times_S S'$.
Denote by $f'$ the induced morphism $X' \to S'$.
Since $f$ is separated, quasi-finite and locally of finite type, there exists a connected component $V' \subset X'$ such that 
\[
f'|_{V'} : V' \to X' \to S'
\]
is finite, and $V'$ is an open neighborhood of the fiber of $x$. 
Since $f$ is flat, so is $f'$, hence the image $f'(V') \subset S'$ is an open subset.
By shrinking $S'$, we may assume that $V' \to S'$ is surjective.
Since $f$ is an isomorphism over $U \subset S$, $f'$ is an isomorphism over $g^{-1} (U) \subset S'$.
Therefore, combining with the surjectivity of $f'|_{V'}$, we have 
\begin{equation}\label{eq-a}
(f')^{-1} (g^{-1} (U)) \subset V' .
\end{equation}
On the other hand, since the map $g \circ f'$ is a flat morphism, Proposition \ref{sch-im-flat-bc} implies that the open subset
\begin{equation}\label{eq-b}
(f')^{-1} (g^{-1} (U)) \subset X' = V' \sqcup X'_1 ,
\end{equation}
is schematically dense, where $X'_1$ is an open and closed subset of $X'$.
Therefore,  (\ref{eq-a}) shows that $X'_1 = \emptyset$ and $X' = V'$.
Therefore, we have $f' = f' |_{V'}$, hence $f'$ is finite.

By replacing $S$ by the image of $S' \to S$, we may assume that $S' \to S$ is an fpqc-cover.
Since finiteness is an fpqc-local property, we conclude that $f$ is finite. This finishes the proof of Theorem \ref{strong-lemma}.
\end{proof}

\enlargethispage*{20pt}

\section{Cofilteredness for diagrams}
\setcounter{subsection}{1}

Let $u:\sC\to \sD$ have a pro-left adjoint $v:\sD\to \pro{}\sC$. We give ourselves a system of subcategories $(I(d)\subset d\downarrow u)_{d\in \sD}$  representing $v$. Thus each $I(d)$ is ordered and cofiltered, and $v(d)= ``\lim"_{c\in I(d)} c$ for any $d$.

\begin{lemma}\label{l2} Let $\Delta$ be a finite category without loops: the collections of objects and morphisms of $\Delta$ are finite and the only endomorphisms of objects are the identities.
For $d\in \sD$, define a subcategory $I(\ul{d})$ of $\ul{d}\downarrow u^\Delta$ as follows: an object $X$ (resp. morphism $f$) of $\ul{d}\downarrow u^\Delta$ is in $I(\ul{d})$ if and only if $X(\delta)$ (resp. $f(\delta)$) is in $I(d(\delta))$ for all $\delta\in \Delta$. Then the category $I(\ul{d})$ is ordered and cofiltered for all $\ul{d}\in \sD^\Delta$.
\end{lemma}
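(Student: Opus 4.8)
The plan is to establish two things: that $I(\ul d)$ is \emph{ordered} (a poset), and that it is \emph{cofiltered}; the first feeds the second. For orderedness, observe that a morphism $f$ of $\ul d\downarrow u^\Delta$ lying in $I(\ul d)$ has all components $f(\delta)$ in the thin categories $I(d(\delta))$, so each $f(\delta)$, and hence $f$ itself, is determined by its source and target; antisymmetry of the $I(d(\delta))$ then forces $I(\ul d)$ to be a poset. Consequently cofilteredness reduces to the single assertion: \emph{every finite family $X_1,\dots,X_m\in I(\ul d)$ with $m\ge 0$ admits a common lower bound $Z$, i.e. an object $Z$ with morphisms $Z\to X_r$ for all $r$} (taking $m=0$ yields nonemptiness and $m=2$ downward directedness). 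I would prove this by induction on the number of objects of $\Delta$.

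The engine of the argument is the filtered-colimit description coming from pro-representability. Since $v(d)=``\lim"_{(c',\iota')\in I(d)}c'$, for every $c\in\sC$ one has
\[ \Hom_\sD(d,u(c))=\colim_{(c',\iota')\in I(d)^{\op}}\Hom_\sC(c',c),\qquad f\mapsto u(f)\circ\iota'. \]
Two consequences will be used repeatedly: $(\ast)$ every object $(c,\iota)$ of $d\downarrow u$ receives a morphism $c'\to c$ over $d$ from some $(c',\iota')\in I(d)$; and $(\ast\ast)$ for $(c',\iota')\in I(d)$ and any $c\in\sC$, two morphisms $g_1,g_2\colon c'\to c$ with $u(g_1)\circ\iota'=u(g_2)\circ\iota'$ become equal after precomposition with some refinement $c''\to c'$ in $I(d)$.

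For the inductive step I would replace $\Delta$ by a skeleton (harmless, as it changes none of the relevant categories up to equivalence), so that the morphisms of $\Delta$ define a partial order on objects and $\Delta$ possesses a \emph{source} $\delta_0$, an object admitting no non-identity morphism into it. Setting $\Delta'=\Delta\setminus\{\delta_0\}$, a finite category without loops with one fewer object, the inductive hypothesis applied to $\ul d|_{\Delta'}$ and the $X_r|_{\Delta'}$ yields $Z'\in I(\ul d|_{\Delta'})$ with $Z'\to X_r|_{\Delta'}$. It remains to extend $Z'$ over $\delta_0$. For each non-identity $\eta\colon\delta_0\to\delta'$ the pair $\bigl(Z'(\delta'),\,\iota^{Z'}_{\delta'}\circ\ul d(\eta)\bigr)$ is an object of $d(\delta_0)\downarrow u$, so by $(\ast)$ it is reached from some object of $I(d(\delta_0))$; using cofilteredness of $I(d(\delta_0))$ I would choose $Z(\delta_0)\in I(d(\delta_0))$ lying below all of these and below each $X_r(\delta_0)$. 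This produces candidate transition maps $Z(\delta_0)\to Z'(\delta')$ and comparison maps $Z(\delta_0)\to X_r(\delta_0)$.

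The main obstacle is \emph{coherence}: because the fibres $d(\delta_0)\downarrow u$ are not thin, these candidate maps need not assemble into an honest functor $\Delta\to\sC$, nor need $Z\to X_r$ be natural. This is exactly what $(\ast\ast)$ repairs. For each of the finitely many relations to be enforced—a factorization $\eta=\psi\phi$, or a naturality square for some $X_r$—the two resulting morphisms out of $Z(\delta_0)$ agree after applying $u(-)\circ\iota_{\delta_0}$; this is a direct check using the naturality of $\ul d\Rightarrow uZ'$ and of $\ul d\Rightarrow uX_r$, both reducing each side to $\iota^{\bullet}_{\delta'}\circ\ul d(\eta)$. Hence each pair is equalized by a refinement of $Z(\delta_0)$ in $I(d(\delta_0))$, and performing all finitely many refinements simultaneously (again by cofilteredness of $I(d(\delta_0))$) and precomposing yields a genuine $Z\in I(\ul d)$ together with morphisms $Z\to X_r$, completing the induction and thereby the proof.
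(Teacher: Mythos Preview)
Your proof is correct and follows the same inductive strategy as the paper: remove a source object $\delta_0$, handle $\Delta'=\Delta\setminus\{\delta_0\}$ by induction, then fill in the value at $\delta_0$ using cofilteredness of $I(d(\delta_0))$. You are in fact more careful than the paper, which simply asserts that the extended data ``complete the construction of $X_3$'' without checking functoriality of the resulting diagram or naturality of $X_3\to X_i$; your use of the filtered-colimit property $(\ast\ast)$ to equalize the finitely many coherence relations after a further refinement of $Z(\delta_0)$ fills this gap cleanly.
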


\begin{proof} Ordered is obvious. For cofiltered, induction on $\# Ob(\Delta)$. We may assume $\Delta$ nonempty. The finiteness and ``no loop'' hypotheses imply that $\Delta$ has an object $\delta_0$ such that no arrow leads to $\delta_0$: we call such on object \emph{minimal}. Let $\Delta'$ be the subcategory of $\Delta$ obtained by removing $\delta_0$ and all the arrows leaving from $\delta_0$.  Let $X_1:\ul{d}\to u^\Delta(\ul{c_1})$, $X_2:\ul{d}\to u^\Delta(\ul{c_2})$ be two objects of $I(\ul{d})$. By induction, we may find $Y_3:\ul{d}\mid \Delta'\to u^{\Delta'}(\ul{c_3}')\in I(\ul{d}\mid \Delta')$ sitting above $X_1\mid \Delta'$ and $X_2\mid \Delta'$.  Let $f:\delta_0\to \delta$ be an arrow, with $\delta\in \Delta'$: by the functoriality of $v$, there exists a commutative diagram in $\sD$
\[\begin{CD}
d(\delta_0)@>\phi(f)>> u(c(f))\\
@V{d(f)}VV @Vu(\psi(f))VV\\
d(\delta)@>Y_3(\delta)>> u(c'_3(\delta))
\end{CD}\]
with $\phi(f)\in I(d(\delta_0))$. Since $I(d(\delta))$ is cofiltered, we may find an object $d(\delta_0)\by{g}u(c)\in I(d(\delta))$ sitting above all $\phi(f)$'s as well as  $X_1(\delta_0)$ and $X_2(\delta_0)$. Then, together with $Y_3$,  $X_3(\delta_0)=:g$ completes the construction of $X_3$ dominating $X_1$ and $X_2$.
\end{proof}

%\enlargethispage*{30pt}

\end{document}